\newtheorem{claim}[theorem]{Claim}
\newcommand\rel[1]{\mathrel{#1}}
\newcommand\nr[1]{\not\mathrel{#1}}
\newcommand{\SSL}{{\sf{StoneMS}}}
\newcommand{\AlgLat}{{\sf{AlgLat}_{Sup}}}
\newcommand{\CohLoc}{{\sf{CohLoc}}}
\newcommand{\AlgFrmJ}{{\sf{AlgFrm}_{Sup}}}
\newcommand{\AlgFrmJB}{{\sf{AlgFrm}_{SupB}}}
\newcommand{\AlgFrmWS}{{\sf{AlgFrm}_{FInf}}}
\newcommand{\AlgFrmS}{{\sf{AlgFrm}_{FInfB}}}
\newcommand{\AlgFrm}{{\sf{AlgFrm}}}
\newcommand{\ArFrmJ}{{\sf{ArFrm}_{Sup}}}
\newcommand{\ArFrmWS}{{\sf{ArFrm}_{FInf}}}
\newcommand{\ArFrmS}{{\sf{ArFrm}_{FInfB}}}
\newcommand{\ArFrm}{{\sf{ArFrm}}}
\newcommand{\KAlgFrmJ}{{\sf{KAlgFrm}_{Sup}}}
\newcommand{\KAlgFrmJB}{{\sf{KAlgFrm}_{SupB}}}
\newcommand{\KAlgFrmWS}{{\sf{KAlgFrm}_{FInf}}}
\newcommand{\KAlgFrmS}{{\sf{KAlgFrm}_{FInfB}}}
\newcommand{\KAlgFrm}{{\sf{KAlgFrm}}}
\newcommand{\CohFrmJ}{{\sf{CohFrm}_{Sup}}}
\newcommand{\CohFrmWS}{{\sf{CohFrm}_{FInf}}}
\newcommand{\CohFrm}{{\sf{CohFrm}}}
\newcommand{\StoneFrmJ}{{\sf{StoneFrm}_{Sup}}}
\newcommand{\StoneFrmWS}{{\sf{StoneFrm}_{FInf}}}
\newcommand{\StoneFrm}{{\sf{StoneFrm}}}
\newcommand{\LStoneFrmJ}{{\sf{LStoneFrm}_{Sup}}}
\newcommand{\LStoneFrmWS}{{\sf{LStoneFrm}_{FInf}}}
\newcommand{\LStoneFrmS}{{\sf{LStoneFrm}_{FInfB}}}
\newcommand{\LStoneFrm}{{\sf{LStoneFrm}}}
\newcommand{\GPries}{{\sf{GPS}}}
\newcommand{\GPriesT}{{\sf{GPS}_T}}
\newcommand{\GPriesPS}{{\sf{GPS}_{PS}}}
\newcommand{\GPriesS}{{\sf{GPS}_S}}
\newcommand{\GPriesF}{{\sf{GPS}_F}}
\newcommand{\GPriesP}{{\sf{GPS}_P}}
\newcommand{\PGPries}{{\sf{PGPS}}}
\newcommand{\PGPriesT}{{\sf{PGPS}_T}}
\newcommand{\PGPriesWS}{{\PGPries_{\sf S}}}
\newcommand{\PGPriesS}{{\PGPries_{\sf ST}}}
\newcommand{\PGPriesSF}{{\PGPries_{\sf FT}}}
\newcommand{\PGPriesF}{{\PGPries_{\sf F}}}
\newcommand{\PGPriesP}{{\PGPries_{\sf P}}}
\newcommand{\Pries}{{\sf{PS}}}
\newcommand{\PriesR}{{\sf{PS}_R}}
\newcommand{\PriesPS}{{\sf{PS}_{PS}}}
\newcommand{\PPries}{{\sf{PPS}}}
\newcommand{\PPriesWS}{{\sf{PPS}_{S}}}
\newcommand{\PPriesS}{{\sf{PPS}_{ST}}}
\newcommand{\PPriesP}{{\sf{PPS}_P}}
\newcommand{\Stone}{{\sf{Stone}}}
\newcommand{\StonePS}{{\sf{Stone}_{PS}}}
\newcommand{\StoneS}{{\sf{Stone}_S}}
\newcommand{\StoneR}{{\sf{Stone}_R}}
\newcommand{\PStone}{{\sf{PStone}}}
\newcommand{\PStoneWS}{{\sf{PStone}_{S}}}
\newcommand{\PStoneS}{{\sf{PStone}_{ST}}}
\newcommand{\PStoneP}{{\sf{PStone}_P}}
\newcommand{\MSLat}{{\sf{MS}}}
\newcommand{\DMSLatWS}{{\sf{DMS}_{FSup}}}
\newcommand{\DMSLat}{{\sf{DMS}}}
\newcommand{\DMSLatB}{{\sf{DMS}_B}}
\newcommand{\DMSLatS}{{\sf{DMS}_{FSupB}}}
\newcommand{\DMSLatP}{{\sf{DMS}_P}}
\newcommand{\BDMSLat}{{\sf{BDMS}}}
\newcommand{\BDMSLatB}{{\sf{BDMS}_B}}
\newcommand{\BDMSLatWS}{{\sf{BDMS}_{FSup}}}
\newcommand{\BDMSLatS}{{\sf{BDMS}_{FSupB}}}
\newcommand{\BDMSLatP}{{\sf{BDMS}_P}}
\newcommand{\DLat}{{\sf{DL}}}
\newcommand{\DLatM}{{\sf{DL}_M}}
\newcommand{\DLatL}{{\sf{DL}_L}}
\newcommand{\DmLat}{{\sf{DL}^-}}
\newcommand{\DmLatM}{{\sf{DL}_M^-}}
\newcommand{\DmLatS}{{\sf{DL}_B^-}}
\newcommand{\DmLatP}{{\sf{DL}_P^-}}
\newcommand{\BAM}{{\sf{BA}_M}}
\newcommand{\BAL}{{\sf{BA}_L}}
\newcommand{\BA}{{\sf{BA}}}
\newcommand{\GBAM}{{\sf{GBA}_M}}
\newcommand{\GBA}{{\sf{GBA}}}
\newcommand{\GBAS}{{\sf{GBA}_B}}
\newcommand{\GBAP}{{\sf{GBA}_P}}
\newcommand{\Clop}{{\sf{Clop}}}
\newcommand{\ClopUp}{{\sf{ClopUp}}}
\newcommand{\Filt}{{\sf{Filt}}}
\newcommand{\ClopFilt}{{\sf{ClopFilt}}}
\newcommand{\Pf}{{\sf{Pr}}}
\newcommand{\Kp}{{\sf{K}}}
\newcommand{\I}{\mathcal{I}}
\newcommand{\X}{\mathcal{X}}
\newcommand{\Y}{\mathcal{Y}}
\newcommand{\PP}{{\sf{PP}}}
\newcommand{\p}{{\sf{P}}}
\newcommand{\D}{{\sf{D}}}
\newcommand{\Opt}{{\sf{Opt}}}
\newcommand{\V}{\mathcal{V}^a}
\newcommand{\A}{\mathcal{A}}
\newcommand{\K}{\mathcal{K}}
\newcommand{\F}{\mathcal{F}}
\newcommand{\up}{{\uparrow}}
\newcommand{\down}{{\downarrow}}
\newcommand{\arr}{blue} 
\newcommand{\rowsep}{1.0pc}
\newcommand{\widthone}{0.89in}
\newcommand{\widththree}{4.09in}
\newcommand{\widthfour}{.82in}
\newcommand{\widthfive}{1.2in}
\newcommand{\widthseven}{2.15in}
\newcommand{\widtheight}{.82in}
\newcommand{\word}{s.t.}
\begin{document}

\title[Connecting Priestley duality to Hofmann-Mislove-Stralka duality]
{Connecting generalized Priestley duality to Hofmann-Mislove-Stralka duality}

\author{G.~Bezhanishvili, L.~Carai, P.~J.~Morandi}
\address{New Mexico State University\\ Universit\`a degli Studi di Milano\\ 
New Mexico State University\\}

\eaddress{guram@nmsu.edu, luca.carai.uni@gmail.com, pmorandi@nmsu.edu}

\amsclass{06A12; 06D22; 06E15; 18F70; 22A26}
\keywords{Stone duality;  Priestley duality; semilattice; 
algebraic lattice; algebraic frame; coherent frame}

\maketitle

\begin{abstract}
We connect Priestley duality for distributive lattices and 
its generalization to distributive meet-semilattices to Hofmann-Mislove-Stralka  
duality for semilattices. Among other things, this involves consideration 
of various morphisms between algebraic frames. We also show how Stone duality 
for boolean algebras and generalized 
boolean algebras fits as a particular case of the general picture we develop. 
\end{abstract}

\tableofcontents

\section{Introduction}

The celebrated Stone duality \cite{Sto36} establishes a dual equivalence 
between the categories of boolean algebras and what we now call 
Stone spaces (zero-dimensional compact Hausdorff spaces). 
Since Stone's groundbreaking work, numerous dualities have been developed for various 
categories of algebras. 
Stone himself generalized his duality for boolean algebras to distributive lattices 
\cite{Sto37c}. 
The resulting dual spaces are now known as spectral spaces and play an 
important role in algebraic geometry as Zariski spectra of commutative 
rings. 

In \cite{Pri70,Pri72} Priestley developed another duality for 
distributive lattices by means of certain
ordered Stone spaces, 
which became known as Priestley spaces. They form a subcategory of the category
of ordered topological spaces studied by Nachbin \cite{Nac65} 
and have numerous applications in diverse areas such as natural dualities 
\cite{CD98a}, formal concept analysis \cite{GW99,DP02}, computer 
science \cite{Pan13,Geh16}, and modal logic \cite{Gol89,CJ99}.

There are various generalizations of Priestley duality. It was extended
to all lattices by Urquhart \cite{Urq78} (see also 
\cite{Har92,Har97,HD97,Plo08,GvG14,MJ14a}).
Duality theory for distributive lattices with operators and the theory 
of their canonical extensions was developed in \cite{Gol89,GJ94,GJ04}, 
and it was further generalized to lattices with operators in \cite{GH01} 
(see also \cite{MJ14b}). Our main interest is in generalized Priestley duality 
for distributive semilattices developed in \cite{BJ08,BJ11} and \cite{HP08}. 
Our aim is to connect 
this duality, as well as Priestley and Stone dualities, 
to the Pontryagin-style duality for semilattices developed by Hofmann, Mislove, and Stralka \cite{HMS74},
which we will refer to as \emph{HMS duality}. 

The classic Pontryagin duality (see, e.g., \cite[Sec.~24]{HR94}) states that 
the category of locally compact abelian groups is self-dual. As a corollary, 
the categories of abelian groups and compact Hausdorff abelian groups are 
dual to each other, as are the categories of torsion abelian groups and 
Stone abelian groups (see \cite[Sec.~24]{HR94} or \cite[Ch.~VI(3,4)]{Joh82}). 
A version of Pontryagin duality for semilattices by
Hofmann, Mislove, and Stralka \cite{HMS74} states that the categories of meet-semilattices and Stone 
meet-semilattices are dual to each other. Since Stone meet-semilattices 
are exactly the algebraic 
lattices, at the object level this result is a reformulation of an earlier result of 
Nachbin \cite{Nac49} (see also Birkhoff and Frink \cite{BF48}) that there is a 1-1 
correspondence between semilattices and algebraic lattices. The restriction of this 
duality to the distributive case yields that the categories of distributive semilattices 
and distributive algebraic lattices are equivalent. This provides an important link to 
pointfree topology \cite{Joh82,PP12} since distributive algebraic lattices are exactly 
the algebraic frames. 

There are various morphisms to consider between distributive meet-semilattices, which 
give rise to various morphisms between algebraic frames. The study of the resulting 
categories is one of our aims. In addition, we show that prime and pseudoprime 
elements of algebraic lattices, which have been extensively studied in domain 
theory in connection with continuous lattices \cite{GHKLMS03}, can be used to 
analyze the spectra of prime and optimal filters of distributive meet-semilattices that 
play a crucial role in generalized Priestley duality. It is this analysis that connects
generalized Priestley duality to HMS duality. 

By a meet-semilattice we mean a poset in which all finite meets exist, including the empty meet. Thus, 
a meet-semilattice $M$ has a top element, but $M$ may not have a bottom element. 
By HMS duality, the category $\MSLat$ of meet-semilattices 
is dual to the category $\SSL$ of Stone meet-semilattices (that is, 
topological meet-semilattices 
whose topology is a Stone topology). Since Stone meet-semilattices are 
exactly the algebraic 
lattices, working with left adjoints of $\SSL$-morphisms yields 
the category $\AlgLat$ of algebraic lattices and maps 
between them that preserve arbitrary joins and compact elements 
(see \cite[p.~272]{GHKLMS03}). Thus, HMS 
duality yields that $\MSLat$ is equivalent to $\AlgLat$
(see \cite[p.~274]{GHKLMS03}).
This equivalence is obtained by the functors
\[
\F\colon\MSLat\to\AlgLat \ \mbox{ and } \ \K\colon\AlgLat\to\MSLat.
\] 
The functor $\F$ sends each $M \in \MSLat$ to the algebraic lattice of its 
filters ordered by inclusion.
The functor $\K$ sends each $L \in \AlgLat$ to the meet-semilattice of its 
compact elements ordered by the dual of the restriction of the order on $L$.

Let $\DMSLat$ 
be the full subcategory of $\MSLat$ consisting of distributive meet-semilattices. 
As we pointed out above, distributive algebraic lattices are exactly the 
algebraic frames, and we denote by 
$\AlgFrmJ$ the full subcategory of $\AlgLat$ consisting of algebraic frames. 
We point out that morphisms in $\AlgFrmJ$ preserve arbitrary suprema, but may not
be frame homomorphisms.
Restricting the equivalence of $\MSLat$ and $\AlgLat$ to the distributive case 
yields that $\DMSLat$ is equivalent to $\AlgFrmJ$ (see 
Corollary~\ref{cor: DMSLat = DAlgLat}). 

A generalization of Priestley duality 
to distributive meet-semilattices was developed in \cite{BJ08,BJ11}. 
The key ingredient of this 
duality is the notion of an optimal filter of $M\in\DMSLat$, which is best 
described by means of the distributive envelope of $M$. We present two 
versions of generalized Priestley duality, for distributive meet-semilattices 
with and without a bottom element. 
When the bottom is present, a $\DMSLat$-morphism may or may not preserve it.
This results in two dualities for bounded distributive meet-semilattices and
generalized Priestley spaces (see Theorem~\ref{thm: gen Pries}).
Each generalizes to a duality between 
distributive meet-semilattices and 
pointed generalized Priestley spaces with appropriate morphisms 
(see Theorem~\ref{thm: pointed Pries} and 
Corollary~\ref{cor: DMSLatb = PGPriesT}).

Our main observation in connecting generalized Priestley duality to 
HMS duality
is that the categories $\AlgFrmJ$ and $\PGPries$ are dual to 
each other. This we do by constructing the contravariant functors 
\[
\V\colon\PGPries\to\AlgFrmJ \ \mbox{ and } \ \Y\colon\AlgFrmJ\to\PGPries.
\] 
The functor $\V$ is a version of 
the upper Vietoris functor, which is constructed by working with admissible 
closed upsets of pointed 
generalized Priestley spaces (see Definition~\ref{def: admissible closed upsets}). 
The functor $\Y$ is constructed by working with pseudoprime and prime 
elements of algebraic frames. 
In Theorem~\ref{thm: duality between DAlgLat and GPries*} we prove that the 
functors $\V$ 
and $\Y$ establish the duality of $\PGPries$ and $\AlgFrmJ$. This together with 
the equivalence of $\AlgFrmJ$ and $\DMSLat$ yields the duality of $\DMSLat$ 
and $\PGPries$. We also show how these results restrict to the bounded case.

It is natural to consider several stronger notions 
of morphisms between algebraic frames. Recalling that our morphisms preserve 
arbitrary suprema and compact elements, obvious choices are to consider those 
morphisms that preserve all finite infima (resp.\ nonempty finite infima)
of compact elements or only those 
finite infima of compact 
elements that are compact. The latter correspond to those $\DMSLat$-morphisms 
that preserve existing finite suprema (resp.~existing nonempty finite suprema), 
which is equivalent to the inverse image of an 
optimal filter being optimal (see Remark~\ref{rem: optimal}). 
The former are simply frame homomorphisms that 
preserve compact elements, and correspond to those $\DMSLat$-morphisms 
that pull prime filters back to prime filters (see Lemma~\ref{lem: hansoul}). 
We characterize the $\PGPries$-morphisms 
that correspond to these classes of morphisms between algebraic frames, thus 
yielding a series of duality results, which in particular imply the results of 
\cite{BJ08,BJ11} and \cite{HP08}. 

We also show how Stone and Priestley 
dualities fit in the general picture developed in this paper. 
We consider Priestley duality 
for distributive lattices with and without bottom. The latter involves working 
with pointed Priestley spaces. On the frame side, the bounded case requires 
working with coherent frames; that is, algebraic frames in which all finite meets of 
compact elements are compact (see Theorem~\ref{thm: coh = PS}). 
The non-bounded case requires working with 
arithmetic frames; that is, algebraic frames where nonempty finite meets of 
compact elements are compact, but the frame itself may not be compact 
(see Theorem~\ref{thm: detailed pointed Pries}). 
Similarly, we consider two versions of Stone duality, for boolean algebras and for 
generalized boolean algebras. For the latter we work with pointed Stone spaces. 
On the frame side, boolean algebras give rise to Stone frames (see 
Theorem~\ref{thm: Stone duality}), while generalized boolean algebras to
locally Stone frames  (see Theorem~\ref{thm: pointed Stone}). 
As a special case we derive the dualities of Halmos \cite{Hal56} and 
Cignoli et.~al.~\cite{CLP91}.

The following diagram summarizes the connection between HMS duality and 
generalized Priestley duality. 
The horizontal arrows below the labeled arrows are their restrictions. 
The arrows $\rightarrowtail$ represent being a subcategory while the arrows
$\hookrightarrow$ being equivalent to a subcategory. 
The \arr\ color indicates the results obtained in this paper. 
The corresponding diagrams summarizing a similar picture for various categories of 
distributive lattices and boolean algebras are given at the ends of 
Sections~\ref{sec: deriving Priestley}~and~\ref{sec: deriving Stone}, respectively.

\begin{figure}[H] 
\[
\begin{tikzcd}[row sep = \rowsep, column sep = 1.0pc]
& \PGPries \arrow[rr, shift left = .6ex, \arr, "\V"]  \arrow[from=dd, hookrightarrow] && 
\AlgFrmJ \arrow[ll, shift left = .6ex, \arr, "\Y"] \arrow[rr, shift left = .6ex, "\K"] 
\arrow[from=dd, tail] && \DMSLat \arrow[ll, shift left = .6ex, "\F"] \\
\GPries \arrow[ru, hookrightarrow]  \arrow[rr, shift left = .6ex, \arr, crossing over] 
&& \KAlgFrmJ \arrow[ll, shift left = .6ex, \arr, crossing over] \arrow[ru, tail] 
\arrow[rr, shift left = .6ex, crossing over]  && \BDMSLat \arrow[ru, tail]  
\arrow[ll, shift left = .6ex, crossing over] \\
& \PGPriesWS \arrow[rr, \arr, shift left = .6ex]  \arrow[from=dd, tail] && \AlgFrmWS 
\arrow[ll, \arr, shift left = .6ex] \arrow[rr, shift left = .6ex] \arrow[from=dd, tail] && 
\DMSLatWS \arrow[ll, shift left = .6ex] \arrow[uu, tail] \\
\GPriesPS \arrow[rr, shift left = .6ex, \arr, crossing over] \arrow[ru, hookrightarrow] 
\arrow[uu, hookrightarrow]
&& \KAlgFrmWS \arrow[ll, shift left = .6ex, \arr, crossing over] \arrow[ru, tail]  
\arrow[uu, crossing over, tail] 
\arrow[rr, shift left = .6ex, crossing over]  && 
\BDMSLatWS \arrow[ll, shift left = .6ex, crossing over] \arrow[ru, tail] 
\arrow[uu, crossing over, tail]
\\
& \PGPriesS \arrow[rr, \arr, shift left = .6ex]  \arrow[from=dd, tail] && \AlgFrmS 
\arrow[ll, \arr, shift left = .6ex] \arrow[rr, shift left = .6ex] \arrow[from=dd, tail] && 
\DMSLatS \arrow[ll, shift left = .6ex] \arrow[uu, tail] \\
\GPriesS \arrow[rr, shift left = .6ex, \arr, crossing over] \arrow[ru, hookrightarrow] 
\arrow[uu, tail]
&& \KAlgFrmS \arrow[ll, shift left = .6ex, \arr, crossing over] \arrow[ru, tail]  
\arrow[uu, crossing over, tail] \arrow[rr, shift left = .6ex, crossing over]  && 
\BDMSLatS \arrow[ll, shift left = .6ex, crossing over] \arrow[ru, tail] 
\arrow[uu, crossing over, tail]\\
& \PGPriesP \arrow[rr, \arr, shift left = .6ex] && \AlgFrm \arrow[ll, shift left = .6ex, \arr] 
\arrow[rr, shift left = .6ex] &&  \DMSLatP \arrow[ll, shift left = .6ex] \arrow[uu, tail] \\
\GPriesP \arrow[rr, \arr, shift left = .6ex] \arrow[uu, tail] \arrow[ru, hookrightarrow] 
&& \arrow[ll, \arr, shift left = .6ex] \KAlgFrm \arrow[rr, shift left = .6ex] 
\arrow[uu, tail, crossing over] \arrow[ru, tail] && \BDMSLatP \arrow[ll, shift left = .6ex] 
\arrow[uu, tail, crossing over] \arrow[ru, tail]
\end{tikzcd}
\]
\caption{Connecting generalized Priestley duality and HMS duality}\label{fig: first diagram}
\end{figure}

The tables below describe the categories listed in Figure~\ref{fig: first diagram}. The order
of the categories in each table corresponds to the diagram from top to bottom.

\begin{center}
\begin{tabular}{|p{\widthone}p{\widththree}p{\widthfour}|} \hline
\multicolumn{3}{|c|}{\textbf{Categories of pointed generalized Priestley spaces}} \\ \hline
\textsf{Category} & \textsf{Morphisms} & \textsf{Location}\\ 
\hline
$\PGPries$  &  generalized Priestley morphisms & Def.~\ref{def: PGPries}\\
$\PGPriesWS$  & strong Priestley morphisms & Def.~\ref{def: functional and strong}\ref{PGPriesS} \\
$\PGPriesS$  & $\PGPriesWS$-morphisms \word~$f[X^-] \subseteq Y^-$ when $m, n$ are isolated & \textquotedbl \\
$\PGPriesP$  & $\PGPriesWS$-morphisms \word~$f[X_0] \subseteq Y_0$ & Def.~\ref{def: PGPriesP}\\
\hline
\end{tabular}
\end{center}

\begin{center}
\begin{tabular}{|p{\widthone}p{\widththree}p{\widthfour}|} \hline
\multicolumn{3}{|c|}{\textbf{Categories of generalized Priestley spaces}} \\ \hline
\textsf{Category} & \textsf{Morphisms} & \textsf{Location}\\ 
\hline
$\GPries$ & generalized Priestley morphisms & Def.~\ref {def: GPries}\\
$\GPriesPS$ & partial strong Priestley morphisms & Def.~\ref{def: partial strong Priestley morphism}\ref{GPriesPS}\\
$\GPriesS$  & strong Priestley morphisms & 
Def.~\ref{def: functional and strong morphisms}\ref{GPriesS}\\
$\GPriesP$  & $\GPriesS$-morphisms  
\word~$f[X_0] \subseteq Y_0$ & Def.~\ref{def: PGPriesP}\\
\hline
\end{tabular}
\end{center}

\begin{center}
\begin{tabular}{|p{\widthone}p{\widththree}p{\widthfour}|} \hline
\multicolumn{3}{|c|}{\textbf{Categories of algebraic frames}} \\ \hline
\textsf{Category} & \textsf{Morphisms} & \textsf{Location}\\ 
\hline
$\AlgFrmJ$  & maps preserving suprema and compact elements & 
Def.~\ref{def: DMSLat and AlgFrmJ}\ref{AlgFrmJ} \\ 
$\AlgFrmWS$  & $\AlgFrmJ$-morphisms satisfying 
(\ref{eqn: dagger}) & Def.~\ref{def: AlgFrmS}\ref{AlgFrmWS}\\
$\AlgFrmS$  & bounded $\AlgFrmWS$-morphisms
& \textquotedbl \\
$\AlgFrm$  & frame homomorphisms preserving compact elements 
& Def.~\ref{def: AlgFrm}\\
\hline
\end{tabular}
\end{center}

\begin{center}
\begin{tabular}{|p{\widthone}p{\widththree}p{\widthfour}|} \hline
\multicolumn{3}{|c|}{\textbf{Categories of compact algebraic frames}} \\ \hline
\textsf{Category}  & \textsf{Morphisms} & \textsf{Location}\\ 
\hline
$\KAlgFrmJ$  & $\AlgFrmJ$-morphisms & 
Def.~\ref{def: KAlgFrmJ and BDMSLat}\ref{KAlgFrmJB}\\
$\KAlgFrmWS$  & $\AlgFrmWS$-morphisms & 
Def.~\ref{def: AlgFrmS}\ref{KAlgFrmWS}\\
$\KAlgFrmS$  & $\AlgFrmS$-morphisms & 
\textquotedbl \\
$\KAlgFrm$  & $\AlgFrm$-morphisms & 
Def.~\ref{def: KAlgFrm and BDMSLatP}\ref{KAlgFrm} \\
\hline
\end{tabular}
\end{center}

\begin{center}
\begin{tabular}{|p{\widthone}p{\widththree}p{\widthfour}|} \hline
\multicolumn{3}{|c|}{\textbf{Categories of distributive meet-semilattices}} \\ \hline
\textsf{Category}  & \textsf{Morphisms} & \textsf{Location}\\ 
\hline
$\DMSLat$ & meet-semilattice homomorphisms & 
Def.~\ref{def: DMSLat and AlgFrmJ}\ref{DMSLat}\\
$\DMSLatWS$  & $\DMSLat$-morphisms preserving existing nonempty finite sups & 
Def.~\ref{def: DMSLatS}\ref{DMSLatWS} \\
$\DMSLatS$  & bounded $\DMSLatWS$-morphisms & 
Def.~\ref{def: DMSLatS}\ref{DMSLatS}\\
$\DMSLatP$  & $\DMSLat$-morphisms satisfying (\ref{eqn: *})
& Def.~\ref{def: DMSLatP}\\
\hline
\end{tabular}
\end{center}

\begin{center}
\begin{tabular}{|p{\widthone}p{\widththree}p{\widthfour}|} \hline
\multicolumn{3}{|c|}{\textbf{Categories of bounded distributive meet-semilattices}} \\ \hline
\textsf{Category} & \textsf{Morphisms} & \textsf{Location}\\ 
\hline
$\BDMSLat$ & $\DMSLat$-morphisms & 
Def.~\ref{def: KAlgFrmJ and BDMSLat}\ref{BDMSLatB}  \\
$\BDMSLatWS$  & $\DMSLatWS$-morphisms & 
Def.~\ref{def: DMSLatS}\ref{BDMSLatWS} \\
$\BDMSLatS$  & $\DMSLatS$-morphisms & 
Def.~\ref{def: DMSLatS}\ref{BDMSLatS}\\
$\BDMSLatP$  & $\DMSLatP$-morphisms & 
Def.~\ref{def: KAlgFrm and BDMSLatP}\ref{BDMSLatP}\\
\hline
\end{tabular}
\end{center}

\section{Hofmann-Mislove-Stralka duality} \label{sec: pontryagin}

We recall that a {\em meet-semilattice} is a poset $M$ in which meets of 
finite subsets exist. In particular, $M$ has a top element, which we
denote by $1_M$ or $1$ if the context is clear. A 
{\em meet-semilattice homomorphism} is a map 
$\alpha\colon M_1\to M_2$ preserving all finite meets. In particular, 
$\alpha(1_{M_1})=1_{M_2}$. 

A {\em topological meet-semilattice} is a meet-semilattice $M$ such that 
$M$ is also a topological space in which the meet operation is continuous. 
If the  topology on $M$ is a Stone topology, then we call $M$ a 
{\em Stone meet-semilattice}. 

\begin{definition} \label{def: MS and StoneMS}
\hfill
\begin{enumerate}[$(1)$]
\item Let $\MSLat$\label{MSLat} be the category of 
meet-semilattices and meet-semilattice homomorphisms.
\item Let $\SSL$\label{SSL} be the category of Stone 
meet-semilattices and continuous meet-semi\-lattice homomorphisms.
\end{enumerate}
\end{definition}

Hofmann, Mislove, and Stralka \cite{HMS74} developed a duality between 
$\MSLat$ and $\SSL$
that is reminiscent of Pontryagin duality. We will refer to it as \emph{HMS duality}.  
There are two contravariant 
functors establishing this duality. The functor $\MSLat\to\SSL$ sends 
$M\in\MSLat$ to the Stone meet-semilattice ${\rm hom}_{\MSLat}(M,2)$, 
and the functor $\SSL\to\MSLat$ sends $L\in\SSL$ to ${\rm hom}_{\SSL}(L,2)$. 
Since meet-semilattice homomorphisms $M\to 2$ correspond to filters of $M$, 
we can alternatively work with filters of $M$, which is more 
convenient for our purposes. 
We thus define the functors establishing HMS duality as follows. 

For $M \in \MSLat$, let $\Filt(M)$ be the poset of filters of $M$ ordered by 
inclusion. For $a \in M$ let $\sigma(a) = \{ F \in \Filt(M) \mid a \in F \}$, and 
topologize $\Filt(M)$ by the subbasis 
\[
\{ \sigma(a) \mid a \in M \} \cup \{ \sigma(b)^c \mid b \in M \}.
\] 
Then $\Filt(M)$ is a Stone meet-semilattice. Moreover, if 
$\alpha \colon M_1 \to M_2$ 
is an $\MSLat$-morphism, then $\alpha^{-1} \colon \Filt(M_2) \to \Filt(M_1)$ is a 
$\SSL$-morphism. This defines a contravariant functor 
$\Filt \colon \MSLat \to \SSL$. 

To define a contravariant functor in the other direction, for $L \in \SSL$ let 
$\ClopFilt(L)$ be the poset of clopen filters of $L$ ordered by inclusion. 
Then $\ClopFilt(L)$ is a meet-semilattice in which finite meets are finite 
intersections (and $L$ is the top element). Moreover, if $\alpha \colon L_1 \to L_2$ 
is a $\SSL$-morphism, then $\alpha^{-1} \colon \ClopFilt(L_2) \to \ClopFilt(L_1)$ 
is an $\MSLat$-morphism. This defines a contravariant functor 
$\ClopFilt \colon \SSL \to \MSLat$, which together with $\Filt$ yields HMS duality:

\begin{theorem} \emph{\cite[Thm.~3.9]{HMS74}} \label{thm: Pontryagin}
$\MSLat$ is dually equivalent to $\SSL$.
\end{theorem}

As we pointed out in the introduction, Stone meet-semilattices are exactly the algebraic 
lattices. To see this, we recall that an element $k$ of a complete lattice $L$ is 
{\em compact} if $k\le\bigvee S$ implies $k\le\bigvee T$ for some finite 
$T\subseteq S$, and that $L$ is {\em algebraic} if the poset $\Kp(L)$ of compact 
elements of $L$ is join-dense in $L$ (meaning that each element of $L$ is a join 
of compact elements). By HMS duality, each Stone 
meet-semilattice is isomorphic to $\Filt(M)$ for some $M\in\MSLat$. Clearly 
$\Filt(M)$ is a complete lattice, where meet is set-theoretic intersection and 
join is the filter generated by the union. Moreover, compact elements of $\Filt(M)$ 
are precisely the principal filters ${\uparrow}a$ (see \cite[Prop.~3.8]{HMS74}), 
which clearly join-generate $\Filt(M)$. 
Thus, each Stone meet-semilattice is an algebraic lattice. 

Conversely, if $L$ is an algebraic lattice, we consider the topology $\lambda$ on $L$ 
generated by the subbasis 
\[
\{ \up k \mid k \in \Kp(L) \} \cup \{ (\up l)^c \mid l \in \Kp(L) \}.
\]
Since $\{ \up k \mid k \in \Kp(L) \}$ is a basis for the Scott topology 
\cite[Cor.~II-1.15]{GHKLMS03}, 
$\lambda$ is the Lawson topology \cite[Def.~III-1.1]{GHKLMS03}. 
Therefore, $\lambda$ is a Stone topology \cite[Thm.~III-1.10]{GHKLMS03}, 
and with this topology, 
$L \in \SSL$ \cite[Thm.~III-2.8]{GHKLMS03}.

Let $\alpha \colon L_1 \to L_2$ be a map between Stone meet-semilattices. 
By  \cite[Thm.~II.3.25]{HMS74}, $\alpha$ is a $\SSL$-morphism iff 
$\alpha$ preserves arbitrary infima and directed suprema. 
But, $\alpha$ preserves arbitrary infima iff it has a left adjoint 
$\beta \colon L_2 \to L_1$, which then preserves 
arbitrary suprema. 
Moreover, $\alpha$ preserves directed suprema iff $\beta$ preserves 
compact elements \cite[Cor.~IV-1.12]{GHKLMS03}.
We thus obtain that $\SSL$ is dually isomorphic to
the following category:

\begin{definition} \label{def: AlgLatJ}
Let $\AlgLat$ be the category of algebraic lattices and maps between 
them preserving arbitrary suprema and compact elements. 
\end{definition}

The above observation together with Theorem~\ref{thm: Pontryagin} yields the 
following version of HMS duality for meet-semilattices:

\begin{corollary} \emph{\cite[p.~274]{GHKLMS03}} \label{cor: MSLat = AlgLat}
$\MSLat$ is equivalent to $\AlgLat$.
\end{corollary}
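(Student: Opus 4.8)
The plan is simply to splice together the two halves that the preceding discussion has already prepared: the dual equivalence between $\MSLat$ and $\SSL$ from Theorem~\ref{thm: Pontryagin}, and a dual isomorphism between $\SSL$ and $\AlgLat$ obtained by sending each $\SSL$-morphism to its left adjoint. Since the composite of a contravariant equivalence with a contravariant isomorphism is a covariant equivalence, this produces the desired equivalence $\MSLat\simeq\AlgLat$.

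The one step requiring attention is to turn the informal passage before the corollary into an honest contravariant isomorphism $(-)_\ast\colon\SSL\to\AlgLat$. On objects there is nothing to prove: the discussion shows that the underlying complete lattice of a Stone meet-semilattice is algebraic, and conversely that an algebraic lattice carries a unique Stone meet-semilattice topology, so the object assignments are mutually inverse bijections. On morphisms, if $\alpha\colon L_1\to L_2$ is an $\SSL$-morphism then by \cite[Thm.~III.3.25]{HMS74} it preserves arbitrary meets, hence as a meet-preserving map of complete lattices it has a left adjoint $\alpha_\ast\colon L_2\to L_1$ preserving arbitrary joins; and since $\alpha$ also preserves directed joins, \cite[Cor.~IV-1.12]{GHKLMS03} shows that $\alpha_\ast$ preserves compact elements, so $\alpha_\ast$ is an $\AlgLat$-morphism. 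Conversely an $\AlgLat$-morphism, preserving arbitrary joins, has a right adjoint preserving arbitrary meets, and preservation of compact elements translates back to preservation of directed joins, so this right adjoint is an $\SSL$-morphism. By uniqueness of adjoints these two passages are mutually inverse, giving a bijection on each hom-set, and $(\gamma\circ\alpha)_\ast=\alpha_\ast\circ\gamma_\ast$, $(\mathrm{id})_{\ast}=\mathrm{id}$ is the usual functoriality of adjoints; thus $(-)_\ast$ is a contravariant isomorphism of categories.

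Then I would compose. With $\Filt\colon\MSLat\to\SSL$ and $\mathrm{ClopFilt}\colon\SSL\to\MSLat$ the contravariant functors of Theorem~\ref{thm: Pontryagin}, and $(-)_\ast\colon\SSL\to\AlgLat$ together with its inverse $(-)^\ast\colon\AlgLat\to\SSL$ as above, the covariant functor $(-)_\ast\circ\Filt\colon\MSLat\to\AlgLat$ has quasi-inverse $\mathrm{ClopFilt}\circ(-)^\ast$, the required natural isomorphisms being those of Theorem~\ref{thm: Pontryagin} transported along the isomorphism $\SSL\cong\AlgLat^{\mathrm{op}}$. Concretely, this equivalence carries $M\in\MSLat$ to the algebraic lattice $\Filt(M)$ and an $\MSLat$-morphism $\alpha\colon M_1\to M_2$ to the left adjoint of $\alpha^{-1}\colon\Filt(M_2)\to\Filt(M_1)$, i.e.\ to the map $\Filt(M_1)\to\Filt(M_2)$ sending a filter $F$ to the filter generated by $\alpha[F]$.

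I do not anticipate a real obstacle, as the argument is bookkeeping once the cited inputs are in place; the only genuinely content-bearing point is that the left adjoint of an $\SSL$-morphism lands in $\AlgLat$, that is, preserves compact elements, which is \cite[Cor.~IV-1.12]{GHKLMS03} and relies on the characterization of $\SSL$-morphisms as precisely the maps preserving arbitrary meets and directed joins.
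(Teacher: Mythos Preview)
Your proposal is correct and follows essentially the same approach as the paper: the paper establishes in the discussion preceding the corollary that $\SSL$ is dually isomorphic to $\AlgLat$ via left adjoints (using exactly the cited inputs \cite[Thm.~III.3.25]{HMS74} and \cite[Cor.~IV-1.12]{GHKLMS03}), and then concludes by composing with Theorem~\ref{thm: Pontryagin}. Your write-up is somewhat more explicit about verifying functoriality and bijectivity of the adjoint passage, but the argument is the same.
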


It is this version that we will mainly be working with in this paper. As we pointed out in 
the introduction, the object level of this equivalence goes back to Nachbin 
\cite[Thm.~1]{Nac49} (see also \cite[Thm.~2]{BF48}). However, Nachbin worked with 
join-semilattices and the corresponding algebraic lattices of ideals. Since we are working 
with meet-semilattices, our functor from $\MSLat$ to $\AlgLat$ is the filter functor. 
We next describe explicitly how it acts on morphisms.

Let $\alpha \colon M_1 \to M_2$ be an $\MSLat$-morphism. The left adjoint of 
$\alpha^{-1}\colon\Filt(M_2) \to \Filt(M_1)$ is the map $\ell \colon \Filt(M_1) 
\to \Filt(M_2)$ 
which sends $F \in \Filt(M_1)$ to
\begin{align*}
\ell(F) &= \bigwedge \{ G \in \Filt(M_2) \mid F \subseteq \alpha^{-1}(G) \} 
= \bigwedge \{ G \in \Filt(M_2) \mid \alpha[F] \subseteq G \} \\
&= \bigwedge \{ G \in \Filt(M_2) \mid \up\alpha[F] \subseteq G \} = \up \alpha[F].
\end{align*}
Let $\F \colon \MSLat \to \AlgLat$ be the 
functor that sends each 
$M \in \MSLat$ to $\Filt(M) \in \AlgLat$ and each $\MSLat$-morphism 
$\alpha\colon M_1 \to M_2$ to the $\AlgLat$-morphism $\ell\colon \Filt(M_1) 
\to \Filt(M_2)$.

The functor in the other direction is the compact element functor 
$\K \colon \AlgLat \to \MSLat$ which sends each algebraic lattice $L$ to the poset 
$\Kp(L)$ of compact elements of $L$ ordered by the dual $\ge$ of the restriction 
of $\le$ to $\Kp(L)$. Since $(\Kp(L),\le)$ is a sub-join-semilattice of $L$, 
$(\Kp(L),\ge)$ is a 
meet-semilattice, where the top element is $0$ (the bottom element of $L$). 
Again, our approach is dual to that of Nachbin \cite{Nac49} who worked with the 
join-semilattice $(\Kp(L),\le)$. 

The functor $\K \colon \AlgLat \to \MSLat$ sends an $\AlgLat$-morphism 
$\alpha \colon L_1 \to L_2$ to its restriction $\Kp(L_1)\to \Kp(L_2)$. 
This is well defined 
since $\alpha$ sends compact elements to compact elements, and it is an 
$\MSLat$-morphism because a finite join of compact elements is compact, 
$\alpha$ preserves suprema, and we work with $\ge$ on $\Kp(L_1)$ and $\Kp(L_2)$.

We conclude this section by observing that the equivalence of
Corollary~\ref{cor: MSLat = AlgLat} restricts to the distributive case. Recall that a 
meet-semilattice $M$ is {\em distributive} if whenever $a, b, c \in M$ with 
$a \wedge b \le c$, there are $a', b' \in M$ with $a \le a'$, $b \le b'$, and 
$a' \wedge b' = c$. 

\[
\begin{tikzcd}[row sep = 0.7ex, column sep = 2ex]
a' & & b'\\
& c \arrow[lu, no head, no tail, dashed] \arrow[ru, no head, no tail, dashed] & \\
a \arrow[uu, no head, no tail, dashed] & & b \arrow[uu, no head, no tail, dashed]\\
 & a \wedge b \arrow[lu, no head, no tail] \arrow[ru, no head, no tail] \arrow[uu, no head, 
 no tail] &
\end{tikzcd}
\]

It is well known that $M$ is distributive iff $\Filt(M)$ is a distributive lattice 
(see, e.g., \cite[p.~167]{Gra11} for the dual result that a join-semilattice is distributive 
iff the lattice of its ideals is distributive). 

We recall \cite[p.~10]{PP12} that a complete lattice $L$ is a \emph{frame} if 
it satisfies the join infinite distributive law
$
a \wedge \bigvee S = \bigvee \{ a \wedge s \mid s \in S\}
$
for each $a \in L$ and $S \subseteq L$. An \emph{algebraic frame} is a frame that 
is an algebraic lattice. It is well known (see, e.g., \cite[p.~309]{Joh82}) that a 
distributive algebraic lattice is a frame. 

\begin{definition} \label{def: DMSLat and AlgFrmJ} 
\hfill
\begin{enumerate}[$(1)$]
\item \label{DMSLat} Let $\DMSLat$ be the 
full subcategory of $\MSLat$ consisting of distributive meet-semilattices.
\item \label{AlgFrmJ} Let $\AlgFrmJ$ be the 
full subcategory of $\AlgLat$ consisting of algebraic frames. 
\end{enumerate}
\end{definition}

As a consequence of Corollary~\ref{cor: MSLat = AlgLat} 
we obtain:

\begin{corollary} \label{cor: DMSLat = DAlgLat}
$\DMSLat$ is equivalent to $\AlgFrmJ$. 
\end{corollary}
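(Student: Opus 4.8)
The plan is to obtain this simply by restricting the equivalence of Corollary~\ref{cor: MSLat = AlgLat} to the relevant full subcategories. Since $\DMSLat$ is a full subcategory of $\MSLat$ and $\AlgFrmJ$ is a full subcategory of $\AlgLat$, and since $\F$ and $\K$ are quasi-inverse, the only thing to check is that $\F$ maps $\DMSLat$ into $\AlgFrmJ$ and that $\K$ maps $\AlgFrmJ$ into $\DMSLat$. Once this is done, $\F$ and $\K$ restrict to fully faithful functors between $\DMSLat$ and $\AlgFrmJ$; the natural isomorphisms $\mathrm{Id}_{\MSLat}\cong\K\F$ and $\F\K\cong\mathrm{Id}_{\AlgLat}$ restrict to natural isomorphisms on these subcategories (their components at objects of the subcategories are isomorphisms between objects of the subcategories, hence lie in the full subcategories), and therefore the restricted functors are again quasi-inverse, giving $\DMSLat\simeq\AlgFrmJ$.

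First I would record that $\F(M)=\Filt(M)$ is always an algebraic lattice, so by the characterization recalled above ($M$ is distributive iff $\Filt(M)$ is a distributive lattice) one has, for any $M\in\MSLat$, that $M\in\DMSLat$ if and only if $\F(M)$ is a distributive algebraic lattice, i.e., if and only if $\F(M)\in\AlgFrmJ$. In particular $\F$ maps $\DMSLat$ into $\AlgFrmJ$. For the other functor, given $L\in\AlgFrmJ$ I would invoke the counit isomorphism $\F(\K(L))\cong L$ coming from the equivalence of Corollary~\ref{cor: MSLat = AlgLat}: since $L$ is a distributive algebraic lattice, so is $\F(\K(L))$, and applying the ``if and only if'' just established to $M=\K(L)$ yields $\K(L)\in\DMSLat$. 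Thus $\K$ maps $\AlgFrmJ$ into $\DMSLat$, and the proof is complete.

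Essentially no real obstacle arises: the argument is bookkeeping on top of Corollary~\ref{cor: MSLat = AlgLat} together with the standard fact relating distributivity of $M$ to distributivity of $\Filt(M)$. If one preferred a proof not routed through the counit, the one point needing a small computation would be the direct verification that $\K(L)=(K(L),\ge)$ is a distributive meet-semilattice when $L$ is a distributive algebraic lattice: for compact $c\le a\vee b$ one writes $c=(c\wedge a)\vee(c\wedge b)$ using finite distributivity, approximates $c\wedge a$ and $c\wedge b$ from below by compact elements, uses compactness of $c$ to pass to a finite subjoin, and regroups to obtain compact $a'\le a$ and $b'\le b$ with $c=a'\vee b'$.
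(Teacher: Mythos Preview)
Your proposal is correct and matches the paper's approach: the corollary is obtained by restricting the equivalence of Corollary~\ref{cor: MSLat = AlgLat} to the full subcategories, using the well-known fact that $M$ is distributive iff $\Filt(M)$ is distributive (together with the identification of distributive algebraic lattices with algebraic frames). Your counit argument for $\K$ and the optional direct verification are more detail than the paper provides, but the underlying idea is the same.
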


As we pointed 
out in the introduction, $\mathsf{Sup}$ in the subscript indicates that morphisms 
in $\AlgFrmJ$ preserve arbitrary suprema, but they may not be frame homomorphisms. 
Note that $\AlgFrmJ$-morphisms preserve 0 but may not preserve 1. 

\begin{definition}
\hfill
\begin{enumerate}[$(1)$]
\item A $\DMSLat$-morphism $\alpha \colon M_1 \to M_2$ is \emph{bounded} if whenever $M_1$ and $M_2$ are bounded, 
then $\alpha(0) = 0$. Let $\DMSLatB$ be the wide subcategory of $\DMSLat$ whose morphisms are bounded. 
\item An $\AlgFrmJ$-morphism $\alpha \colon L_1 \to L_2$ is \emph{bounded} if whenever  $L_1$ and $L_2$ are compact, 
then $\alpha(1) = 1$. Let $\AlgFrmJB$ be the wide subcategory of $\AlgFrmJ$ whose morphisms are bounded.
\end{enumerate}
\end{definition}

For an algebraic frame $L$, since $\Kp(L)$ is ordered by $\ge$, $\Kp(L)$ has a bottom iff $L$ is compact. 
Therefore, 
for algebraic frames $L_1$ and $L_2$, an $\AlgFrmJ$-morphism $\alpha \colon L_1 \to L_2$ is an 
$\AlgFrmJB$-morphism iff its restriction $\alpha \colon \Kp(L_1) \to \Kp(L_2)$ is a $\DMSLatB$-morphism. 
Thus, the following is an immediate consequence of Corollary~\ref{cor: DMSLat = DAlgLat}. 

\begin{corollary} \label{cor: DMSB and AlgFrmJ}
$\DMSLatB$ is equivalent to $\AlgFrmJB$.
\end{corollary}

We recall that a meet-semilattice $M$ is {\em bounded} if it has a bottom, and that a frame $L$ is 
{\em compact} if the top element of $L$ is compact. 

\begin{definition} \label{def: KAlgFrmJ and BDMSLat}
\hfill
\begin{enumerate}[$(1)$]
\item \label{BDMSLatB} Let $\BDMSLat$ be the full subcategory of $\DMSLat$ 
and $\BDMSLatB$ the full subcategory of $\DMSLatB$ whose objects are bounded meet-semilattices. 
\item \label{KAlgFrmJB} Let $\KAlgFrmJ$ be the full subcategory of $\AlgFrmJ$ 
and $\KAlgFrmJB$ the full subcategory of $\AlgFrmJB$ whose 
objects are compact algebraic frames.
\end{enumerate}
\end{definition}

As an immediate consequence of Corollaries~\ref{cor: DMSLat = DAlgLat} and \ref{cor: DMSB and AlgFrmJ}, we obtain:

\begin{corollary}\ \label{cor: BMSLat = KDAlgLat}
\begin{enumerate}[$(1)$]
\item \label{BMSLat = KDAlgLat} $\BDMSLat$ is equivalent to $\KAlgFrmJ$. \label{cor: BMSLat = KDAlgLat 1}
\item \label{BMSLatB = KDAlgLatJB} $\BDMSLatB$ is equivalent to $\KAlgFrmJB$. \label{cor: BMSLat = KDAlgLat 2}
\end{enumerate}
\end{corollary}

\section{Priestley duality and its generalizations} \label{sec: Priestley duality}

In this section we recall Priestley duality and its generalizations. We start by briefly 
describing Priestley duality. For a bounded distributive lattice $M$, let 
$X_M$ be the set of prime 
filters of $M$ ordered by inclusion, and let $\varphi \colon M \to \wp(X_M)$ 
be the map given by 
$\varphi(a) = \{ x \in X_M \mid a \in x\}$. We topologize $X_M$ by letting 
\[
\{\varphi(a) \mid a \in M \} \cup \{ \varphi(b)^c \mid b \in M\}
\]
be a subbasis for the topology. Then $X_M$ is a compact space such that if 
$x,y \in X_M$ with $x \not\le y$, then there is a clopen upset $U$ of $X_M$ 
containing $x$ and missing $y$. Such spaces are called \emph{Priestley spaces}. 

\begin{definition} \label{def: DL and PS}
\hfill
\begin{enumerate}[$(1)$]
\item \label{DL} Let $\DLat$\label{DLat} be the category of bounded 
distributive lattices and bounded lattice homomorphisms. 
\item \label{PS} Let $\Pries$\label{Pries} be the category of Priestley spaces and 
continuous order-preserving maps. 
\end{enumerate}
\end{definition}

We then have a contravariant functor $\DLat \to \Pries$ which sends $M$ to $X_M$ 
and a $\DLat$-morphism 
$\alpha \colon M_1 \to M_2$ to $\alpha^{-1} \colon X_{M_2} \to X_{M_1}$. 
We also have a contravariant functor $\Pries \to \DLat$ which sends $X \in \Pries$ to 
the lattice $\ClopUp(X)$ of clopen upsets of $X$ and a $\Pries$-morphism 
$f \colon X \to Y$ to the $\DLat$-morphism $f^{-1} \colon \ClopUp(Y) \to 
\ClopUp(X)$. 
These functors establish Priestley duality:
\begin{theorem} \emph{\cite{Pri70,Pri72}} \label{thm: Priestley}
$\DLat$ is dually equivalent to $\Pries$.
\end{theorem}

We will be interested in the generalization of Priestley duality to distributive 
meet-semilattices established in \cite{BJ08,BJ11} (see also \cite{HP08} for a similar 
duality for distributive join-semilattices, but with more restrictive morphisms). There 
are two versions of this duality for $\BDMSLat$ and $\DMSLat$. We first describe 
the duality for $\BDMSLat$ from which we derive the duality for $\DMSLat$.  

Let $X$ be a Priestley space. For a closed set $C \subseteq X$, let $\max C$ be 
the set of maximal points and $\min C$ the set of minimal points of $C$. It 
is well known that for every $x \in C$ there are $y \in \max C$ and $z \in \min C$ 
such that $z \le x \le y$.

Let $X_0$ be a fixed dense subset of $X$. We call a clopen upset $U$ of $X$ 
\emph{admissible} if $\max(X\setminus U) \subseteq X_0$. Let $\A(X)$ be the set of 
admissible clopen upsets of $X$. For $x \in X$ set 
$\I_x = \{ U \in \A(X) \mid x \notin U\}$. 

\begin{definition} \label{def: generalized Priestley space}
A \emph{generalized Priestley space} is a tuple $X = \langle X, \tau, \le, X_0\rangle$ 
satisfying
\begin{enumerate}[$(1)$]
\item \label{GPS 1} $\langle X, \tau, \le\rangle$ is a Priestley space;
\item \label{GPS 2} $X_0$ is a dense subset of $X$;
\item \label{GPS 3} $X_0$ is order-dense in $X$ \emph{(}meaning that for each 
$x \in X$ there is $y \in X_0$ with $x \le y$\emph{)};
\item \label{GPS 4} $x \in X_0$ iff $\I_x$ is directed;
\item \label{GPS 5} for all $x,y \in X$, we have $x \le y$ iff 
$\forall U \in \A(X),  \, x \in U \Longrightarrow y \in U$.
\end{enumerate}
\end{definition}

\begin{remark}
\mbox{}\begin{enumerate}[$(1)$]
\item By Definition~\ref{def: generalized Priestley space}\ref{GPS 3}, 
$\max X \subseteq X_0$. 
Thus, $\varnothing \in \A(X)$, so $\varnothing \in \I_x$, and hence 
$\I_x$ is nonempty for each $x \in X$.
\item If $X_0=X$, then $\A(X) = \ClopUp(X)$, so Conditions~\ref{GPS 2}--\ref{GPS 5} 
of Definition~\ref{def: generalized Priestley space} 
become redundant, and hence $X$ becomes a Priestley space.
\end{enumerate}
\end{remark}

Let $R \subseteq X \times Y$ be a relation between sets $X$ and $Y$. For 
$U \subseteq Y$ we follow the standard notation in modal logic and write 
$\Box_RU = \{ x\in X \mid R[x] \subseteq U\}$.

\begin{definition} \label{def: GPS morphism}
A {\em generalized Priestley morphism} between generalized Priestley spaces $X, Y$ 
is a relation $R \subseteq X \times Y$ satisfying
\begin{enumerate}[$(1)$]
\item \label{GPS morphism 1} If $x \nr{R} y$, then there is $U \in \A(Y)$ with 
$R[x] \subseteq U$ and $y 
\notin U$. 
\item \label{GPS morphism 2} If $U \in \A(Y)$, then $\Box_RU \in \A(X)$.
\end{enumerate}
\label{GPS morphism 3} We call $R$ \emph{total} if $R^{-1}[Y] = X$.
\end{definition}

This gives rise to two categories.

\begin{definition} \label{def: GPries}
Let $\GPries$ be the category of generalized Priestley spaces and generalized 
Priestley morphisms, and let $\GPriesT$ be the wide subcategory of $\GPries$ whose 
morphisms are total. 
\end{definition}

The identity morphism on $X\in\GPries$ is $\le$ and the 
composition $S * R$ of two morphisms 
$R \subseteq X \times Y$ and $S \subseteq Y \times Z$ is defined by 
\[
x \rel{(S * R)} z \Longleftrightarrow \forall U \in \A(Z),\, x \in \Box_R\Box_S U 
\Longrightarrow z \in U.
\]
We then have the following generalization of Priestley duality:

\begin{theorem} \label{thm: gen Pries}
\emph{\cite[Sec.~6]{BJ11}} 
$\BDMSLat$ is dually equivalent to $\GPries$ and $\BDMSLatB$ is dually equivalent to $\GPriesT$. 
\end{theorem}

The contravariant functors establishing these dual equivalences are constructed as follows.
The functor $\A \colon \GPries \to \BDMSLat$ sends 
$X \in \GPries$ to $\A(X)$ and a $\GPries$-morphism $R \subseteq X \times Y$ 
to $\Box_R \colon \A(Y) \to \A(X)$. 
The relation $R$ is total iff $\Box_R$ preserves the bottom, so we obtain
the restriction $\A \colon \GPriesT \to \BDMSLatB$.
\color{black}

 To define the contravariant functor $\X \colon \BDMSLat \to \GPries$ we recall the 
 definitions of prime and optimal filters of $M \in \BDMSLat$. A filter $P$ of $M$ 
 is \emph{prime} if $F_1 \cap F_2 \subseteq P$ implies $F_1 \subseteq P$ or 
 $F_2 \subseteq P$ for any $F_1, F_2 \in \Filt(M)$. To define optimal filters, we 
 require the definition of the distributive envelope $\D(M)$ of $M$. Let 
 $\BDMSLatS$ be the wide subcategory of $\BDMSLat$ whose morphisms 
 preserve existing finite suprema. Then the forgetful functor 
 ${\sf U}\colon\DLat\to\BDMSLatS$ 
 has a left adjoint $\D\colon \BDMSLatS\to\DLat$, and we call $\D(M)$ the 
 {\em distributive envelope} of $M$. There are various constructions of $\D(M)$ 
 (see \cite{CH78}, \cite[Sec.~3]{BJ11}, or \cite[Thm.~1.3]{HP08}). What matters to 
 us is that $M$ embeds into $\D(M)$ and we identify $M$ with its image in $\D(M)$. 
 Then a filter $F$ of $M$ is {\em optimal} if $F=P \cap M$ for some prime filter $P$ 
 of $\D(M)$. Thus, optimal filters of $M$ are the restrictions of prime filters 
 of $\D(M)$  to $M$. For other characterizations of optimal filters 
 we refer to \cite[Sec.~4]{BJ11}.

Let $\Opt(M)$ be the set of optimal filters of $M$. Then the set $\Pf(M)$ of prime 
filters of $M$ is contained in $\Opt(M)$. 
We order $\Opt(M)$ by inclusion, and  topologize it by letting 
\[
\{ \varphi(a) \mid a \in M\} \cup \{ \varphi(b)^c \mid b \in M\}
\] 
be a subbasis for the topology $\tau$, where 
$\varphi(a) = \{ x \in \Opt(M) \mid a \in x \}$. 
This yields the generalized Priestley space 
$\X(M) := \langle \Opt(M), \tau, \subseteq, \Pf(M)\rangle$.

If $\alpha \colon M_1 \to M_2$ is a $\BDMSLat$-morphism, we define 
$R_\alpha \subseteq \Opt(M_2) \times \Opt(M_1)$ by $x \rel{R_\alpha} y$ if 
$\alpha^{-1}(x) \subseteq y$ and let $\X(\alpha)=R_\alpha$. 
We have that $R_\alpha$ is total iff $\alpha(0) = 0$.
This defines the 
functor $\X\colon\BDMSLat\to\GPries$ and its restriction $\X \colon \BDMSLatB \to \GPriesT$,
and the functors $\A, \X$ establish the dual equivalences of Theorem~\ref{thm: gen Pries}.

To generalize this
to a duality for $\DMSLat$, we will work 
with pointed generalized Priestley spaces. This approach is similar to the one undertaken 
in \cite[Sec.~3]{BMR17} where Esakia duality for Heyting algebras was generalized to a 
duality for brouwerian algebras. 

\begin{definition} \label{def: PGPries}
We call a tuple $(X,m) = {\langle X, \tau, \le, X_0, m \rangle}$ a {\em pointed generalized 
Priestley space} if
\begin{enumerate}[$(1)$]
\item $\langle X, \tau, \le \rangle$ is a Priestley space;
\item $m$ is the unique maximum of $X$;
\item \label{dense} $X_0$ is a dense subset of $X \setminus \{ m \}$;
\item \label{order-dense} $X_0$ is order-dense in $X \setminus \{ m \}$;
\item $x \in X_0$ iff $\I_x$ is nonempty and directed;
\item \label{PGPS separation} $x \le y$ iff 
$\forall U \in \A(X), \, x \in U \Longrightarrow y \in U$.
\end{enumerate}
Let $\PGPries$ be the category of pointed generalized Priestley spaces and generalized 
Priestley morphisms. 
\end{definition}

\begin{remark} \label{rem: pStone}
We recall that a pointed Stone space is a pair $(X,p)$ where $X$ is a Stone space and 
$p\in X$. Thus, every Stone space can be made into a pointed Stone space. On the 
other hand, for a generalized Priestley space $X$ to be made into a pointed generalized 
Priestley space, $X$ must have a unique  maximum. Nevertheless, $\GPries$ 
is equivalent to a full subcategory of $\PGPries$, as we detail in the next remark. 
\end{remark}

\begin{remark} \label{rem: m iso}
By Definition~\ref{def: PGPries}, $m \notin X_0$, so $\varnothing \notin \A(X)$ and 
$\I_m = \varnothing$. In fact, $\I_x = \varnothing$ iff $x = m$. Moreover, $m$ is an 
isolated point iff $\{ m \}$ is the bottom of $\A(X)$. The full subcategory of 
$\PGPries$ consisting of those $(X,m)$ where $m$ is an isolated point is equivalent to 
$\GPries$. The 
equivalence is obtained as follows. If $(X,m)\in\PGPries$ and $m$ is isolated, then 
$X^- := X \setminus \{m\}$ is a generalized Priestley space. Also, if $(X,m),(Y,n)
\in\PGPries$ 
with $m,n$ isolated and $R \subseteq X \times Y$ is a $\PGPries$-morphism, then 
$R^-:=R \cap (X^- \times Y^-)$ is a $\GPries$-morphism.
Conversely, let $X\in\GPries$. If $X^+$ is obtained from $X$ by adding a new isolated 
top and $(X^+)_0 := X_0$, then $X^+ \in \PGPries$.
Also, if $R \subseteq X \times Y$ is a $\GPries$-morphism, then 
$R^+:= R \cup (X^+ \times \{n\})$ is a $\PGPries$-morphism. Thus, we obtain two 
functors which yield an equivalence of $\GPries$ and the full subcategory of $\PGPries$ 
consisting of those $(X,m) \in \PGPries$ in which $m$ is an isolated point.  
\end{remark}

The contravariant functor from $\PGPries$ to $\DMSLat$ is defined the same way as 
the contravariant functor $\A$ above and we use the same letter to denote it. The only 
difference is that if $(X, m)$ is a pointed generalized Priestley space, then 
$\varnothing\notin\A(X)$. Moreover, $\A(X)$ is bounded iff $\{m\}\in\A(X)$, which 
happens iff $m$ is an isolated point of $X$.
If $R \subseteq X \times Y$ is a generalized Priestley morphism with 
$(X,m),(Y,n) \in \PGPries$, then $\A(R)=\Box_R$. The only difference is that 
$m \in \Box_R U$ for each $U \in \A(Y)$.

The contravariant functor from $\DMSLat$ to $\PGPries$ is defined by a slight 
modification of the contravariant functor $\X$. Again, we use the same letter to 
denote it. The main difference is that if $M\in\DMSLat$, then we work with 
$X_M=\Opt(M)\cup\{M\}$, so $M$ becomes the unique maximum of $X_M$. Then 
$M$ has a bottom iff $\{M\}$ is an isolated point of $X_M$.
If $\alpha\colon M_1 \to M_2$ is a $\DMSLat$-morphism, then 
$\X(\alpha)=R_\alpha \subseteq X_{M_2} \times X_{M_1}$. In this case, 
$y \rel{R_\alpha} M_1$ for each $y \in X_{M_2}$.

Consequently, $\A\colon \PGPries \to\DMSLat$ and $\X\colon\DMSLat\to \PGPries$ 
yield the following generalization of Theorem~\ref{thm: gen Pries}, a version of which 
was established in \cite[Thm.~9.2]{BJ11}: 

\begin{theorem} \label{thm: pointed Pries}
$\DMSLat$ is dually equivalent to $\PGPries$.
\end{theorem}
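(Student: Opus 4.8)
The plan is to reduce Theorem~\ref{thm: pointed Pries} to the already-established Theorem~\ref{thm: gen Pries} using the equivalence described in Remark~\ref{rem: m iso}, rather than re-verifying all the naturality squares from scratch. The strategy has two layers: first handle the "isolated-$m$" subcategory of $\PGPries$ via Theorem~\ref{thm: gen Pries}, then bootstrap the general case by a density/limit argument, since a general $(X,m)\in\PGPries$ with $m$ not isolated is the dual of a distributive meet-semilattice without bottom, and such an $M$ is a directed colimit (in $\DMSLat$) of its bounded subsemilattices $M\cup\{0\}$-type approximations. I would proceed as follows.

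First I would verify that the two functors are well defined. For $\X:\DMSLat\to\PGPries$: given $M\in\DMSLat$, one checks that $X_M=\Opt(M)\cup\{M\}$ with the stated topology and order is a pointed generalized Priestley space — conditions (1),(2) are routine ($M$ is the unique maximum since $M\subseteq x$ forces $x=M$), (6) follows because $\varphi(a)=\sigma(a)\cap X_M$ are exactly the subbasic clopen upsets and $M$-filters are determined by their $\varphi$-images, and (3)--(5) are inherited from the corresponding facts for $\X$ in the bounded-style setup of \cite{BJ11}, using that $\I_x=\varnothing$ precisely when $x=M$. For morphisms, given $\alpha:M_1\to M_2$ one checks $R_\alpha\subseteq X_{M_2}\times X_{M_1}$ satisfies Definition~\ref{def: morphism}: property (1) uses that $\alpha^{-1}(x)$ is a filter of $M_1$ together with the fact that an optimal (in particular, a filter that is an intersection with a prime filter of the envelope) filter not containing $y$ is separated from a point by an admissible clopen upset; property (2) is the identity $\Box_{R_\alpha}\varphi(a)=\varphi(\alpha(a))$, which also shows $\A\circ\X$ is naturally isomorphic to the identity via $\varphi$. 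For $\A:\PGPries\to\DMSLat$: $\A(X)$ is a distributive meet-semilattice because intersections of admissible clopen upsets are admissible clopen upsets and distributivity of the meet-semilattice follows from the Priestley separation axiom applied to $\max(X\setminus(U\wedge V))$; on morphisms, $\A(R)=\Box_R$ is an $\MSLat$-morphism preserving finite meets (including the empty one, since $m\in\Box_RU$ always), and $\Box_{S*R}=\Box_R\circ\Box_S$ is exactly the identity that makes composition work, verified directly from the definition of $S*R$.

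Next I would establish the two unit/counit natural isomorphisms. One direction, $\A\circ\X\cong\mathrm{id}_{\DMSLat}$, is witnessed by $\varphi:M\to\A(\X(M))$; I need $\varphi$ to be an isomorphism of distributive meet-semilattices, i.e.\ injective (separate filters, Pontryagin-style, using that principal filters ${\uparrow}a$ lie in $\Opt(M)$), surjective onto admissible clopen upsets (a compactness argument: an admissible clopen upset $U$ has $\max(X_M\setminus U)$ consisting of prime filters, and one shows $U=\varphi(a)$ for a suitable $a$ obtained from the complement), and meet-preserving — all of which appear in \cite{BJ11} for the bounded case and transfer verbatim once one notes that the top element $M$ causes no trouble. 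The other direction, $\X\circ\A\cong\mathrm{id}_{\PGPries}$, is witnessed by the map $\varepsilon_X:X\to\X(\A(X))$ sending $x\mapsto\I_x'$, where $\I_x'=\{U\in\A(X)\mid x\in U\}$ is the "dual" filter; one checks $\I_x'$ is an optimal filter of $\A(X)$ (using Definition (5) for well-definedness of the order, and Definition (4) — $\I_x$ directed iff $x\in X_0$ — to identify which $\I_x'$ are prime, matching $X_0$ with ${\sf Pr}(\A(X))$), that $\varepsilon_X$ is a homeomorphism and order-isomorphism sending $m$ to the top $\A(X)$, and that it is natural. I expect the bulk of these verifications to be available, mutatis mutandis, from the bounded case in \cite{BJ11}; the only genuinely new bookkeeping concerns the distinguished point $m$ and the fact that $\A(X)$ need not have a bottom.

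The main obstacle, and the point deserving the most care, is the behavior of the distinguished maximum $m$ and the interface between "$m$ isolated" and "$m$ not isolated." Concretely, in establishing $\X\circ\A\cong\mathrm{id}$ one must show that $\X(\A(X))$ again has $m$ (i.e.\ the top filter $\A(X)$) as its unique maximum and that it is isolated in $\X(\A(X))$ exactly when it is isolated in $X$ — equivalently, that $\A(X)$ has a bottom iff $\{m\}\in\A(X)$ — and one must verify condition (5) of the definition of pointed generalized Priestley space survives, namely that no optimal filter of $\A(X)$ other than the whole of $\A(X)$ is above all others in the relevant sense. A clean way to finish is to invoke Remark~\ref{rem: m iso}: restrict everything to the full subcategory of $\PGPries$ with $m$ isolated, where the result is literally Theorem~\ref{thm: gen Pries} transported along the equivalences $X\mapsto X^-$, $M\mapsto$ (the bounded envelope with bottom adjoined), and then observe that both $\X$ and $\A$ are compatible with the inclusions $\BDMSLat\hookrightarrow\DMSLat$ and $\{m\ \text{isolated}\}\hookrightarrow\PGPries$ shown in the diagrams above; the non-isolated case then follows because every object and morphism of $\PGPries$ is recovered from the isolated data together with the single extra closed point, on which the functors act in the forced way (by the last sentence of each paragraph defining the modified $\A$ and $\X$). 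This packaging avoids redoing the hard separation lemmas of \cite{BJ11} and isolates the new content to the elementary handling of $m$.
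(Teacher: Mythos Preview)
Your approach diverges sharply from the paper's. The paper does \emph{not} prove Theorem~\ref{thm: pointed Pries} by adapting \cite{BJ11} to the pointed setting; instead it factors through algebraic frames: it establishes a dual equivalence $\AlgFrmJ \simeq \PGPries^{\mathrm{op}}$ directly (Theorem~\ref{thm: duality between DAlgLat and GPries*}), using pseudoprime elements of algebraic frames to build $\Y(L)$ and admissible closed upsets to build $\V(X)$, and then composes with the Pontryagin-style equivalence $\DMSLat \simeq \AlgFrmJ$ (Corollary~\ref{cor: DMSLat = DAlgLat}). Theorem~\ref{thm: eq and dual eq} then checks that this composition recovers $\X$ and $\A$. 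So the paper's argument is intrinsically about algebraic frames and never touches the bounded case as an intermediate step.

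Your direct-verification paragraphs (functors well defined, unit/counit via $\varphi$ and $\varepsilon_X$) are a reasonable sketch of the \cite{BJ11} argument transported to the pointed setting, and would work if carried out. But your ``clean way to finish'' --- reducing to Theorem~\ref{thm: gen Pries} via Remark~\ref{rem: m iso} and then bootstrapping --- has a genuine gap. Remark~\ref{rem: m iso} only gives an equivalence between $\GPries$ and the full subcategory of $\PGPries$ where $m$ is \emph{isolated}; when $m$ is not isolated, $X\setminus\{m\}$ is not compact, hence not a generalized Priestley space, and there is no ``isolated data'' from which to recover $X$. Your claim that ``every object and morphism of $\PGPries$ is recovered from the isolated data together with the single extra closed point'' is therefore not correct as stated. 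Likewise, the directed-colimit idea (approximating $M$ without bottom by bounded subsemilattices) is not developed: you would need to exhibit $M$ as a filtered colimit in $\DMSLat$, show both $\X$ and $\A$ preserve the relevant (co)limits, and compute the corresponding limit in $\PGPries$ --- none of which is routine, and the category $\PGPries$ with its relational morphisms is not obviously well-behaved for such limit arguments. Without this, the reduction to the bounded case does not close, and you are left with the direct \cite{BJ11}-style verification anyway.
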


\begin{definition}
Let $\PGPriesT$ be the wide subcategory of $\PGPries$ consisting of those $\PGPries$-morphisms 
$R \subseteq X \times Y$ 
such that if $X^-,Y^- \in \GPries$, then $R^- \subseteq X^- \times Y^-$ is total.  
\end{definition}

We have the following corollary of Theorem~\ref{thm: pointed Pries}, which generalizes
the duality of Theorem~\ref{thm: gen Pries} between $\BDMSLatB$ and 
$\GPriesT$.

\begin{corollary} \label{cor: DMSLatb = PGPriesT}
$\DMSLatB$ is dually equivalent to $\PGPriesT$.
\end{corollary}

\proof
Let $\alpha \colon M_1 \to M_2$ be a $\DMSLat$-morphism with $M_1, M_2$ bounded.  
Then $X_{M_1}^-, X_{M_2}^- \in \GPries$ (see Remark~\ref{rem: m iso}). It is sufficient to show that $\alpha$ is a 
$\DMSLatB$-morphism iff $R_\alpha^-$ is total. First suppose that $\alpha$ is a 
$\DMSLatB$-morphism, so $\alpha(0) = 0$. 
If $x \in \Opt(M_2)$, then $0 \notin \alpha^{-1}(x)$ since $0 \notin x$. Therefore, by \cite[Lem.~4.7]{BJ11},
there is 
$y \in \Opt(M_1)$ with $\alpha^{-1}(x) \subseteq y$. This implies that $R_\alpha^-[x] \ne \varnothing$. 
Conversely, let $R_\alpha^-$ be total. If $\alpha(0) \ne 0$, then $\up \alpha(0)$ is a proper filter. 
Therefore, there is $x \in \Opt(M_2)$ with $\up \alpha(0) \in x$. 
This implies that $0 \in \alpha^{-1}(x)$, so 
$\alpha^{-1}(x) = M_1$. Thus, $R_\alpha^-[x] = \varnothing$, and hence $R_\alpha^-$ is not total. 
The obtained contradiction proves that $\alpha(0) = 0$. 
\endproof

\section{Connecting generalized Priestley duality to HMS duality} 
\label{sec: deriving generalized Priestley duality}

In this section we connect generalized Priestley duality to HMS duality. 
To do so, we define contravariant functors 
$\V \colon \PGPries \to \AlgFrmJ$ and $\Y \colon \AlgFrmJ \to \PGPries$ and prove 
that they 
establish a dual equivalence, yielding our main result. This together with Corollary~\ref{cor: DMSLat = DAlgLat} 
gives Theorem~\ref{thm: pointed Pries}. As a consequence, we derive 
Theorem~\ref{thm: gen Pries} from Corollary~\ref{cor: BMSLat = KDAlgLat} and
Corollary~\ref{cor: DMSLatb = PGPriesT} from Corollary~\ref{cor: DMSB and AlgFrmJ}, 
thus obtaining the top layer of Figure~\ref{fig: first diagram}.

\subsection*{\textbf{The functor $\V$}.} \label{subsec: V} \label{subsec: 4.1}

Let $X := \langle X, \tau, \le, X_0, m \rangle$ be a pointed generalized Priestley space. 
We generalize the notion of an admissible clopen upset to an admissible closed upset 
of $X$. Recall that $U \in \ClopUp(X)$ is admissible if 
$\max(X \setminus U) \subseteq X_0$. This is equivalent to 
$X\setminus U = \down (X_0 \setminus U)$, which motivates our definition of 
admissible closed upsets. 

\begin{definition} \label{def: admissible closed upsets}
A closed upset $C$ of a pointed generalized Priestley space $X$ is admissible if 
$X \setminus C = \down(X_0 \setminus C)$. Let $\V(X)$ be the set of admissible 
closed upsets of $X$.
\end{definition}

\begin{remark}
The notation $\V(X)$ is motivated by the fact that $\V(X)$ is a version of the upper 
Vietoris functor applied to $X$.
\end{remark}

It is well known that in Priestley spaces, each closed upset is an intersection of clopen
upsets. This result generalizes to admissible closed upsets of pointed generalized 
Priestley spaces. 

\begin{lemma} \label{lem: intersection of admissible clopens}
A closed upset $C$ of a pointed generalized Priestley space $X$ is admissible iff it 
is an intersection of admissible clopen upsets.
\end{lemma}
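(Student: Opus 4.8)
The plan is to prove the two implications separately, with the easy direction first. For the ``if'' direction, suppose $C = \bigcap_i U_i$ with each $U_i$ an admissible clopen upset. Clearly $C$ is a closed upset, so the only thing to check is admissibility, i.e. $X \setminus C = \down(X_0 \setminus C)$. The inclusion $\supseteq$ is automatic since $C$ is an upset. For $\subseteq$, take $x \notin C$; then $x \notin U_i$ for some $i$, and since $U_i$ is admissible, $x \in \down(X_0 \setminus U_i)$, so there is $y \in X_0$ with $x \le y$ and $y \notin U_i$. The subtlety is that this $y$ need not lie outside $C$. To fix this, I would instead pass to a maximal point above $x$ inside the closed set $X \setminus U_i$: since $U_i$ is open, $X \setminus U_i$ is closed, and by the standard fact recalled before Definition~\ref{def: generalized Priestley space} there is $y \in \max(X \setminus U_i)$ with $x \le y$; admissibility of $U_i$ gives $\max(X \setminus U_i) \subseteq X_0$, so $y \in X_0$. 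But $y$ could still be in $C$; so actually I need to be more careful and argue that a maximal point of $X \setminus U_i$ above $x$ that is also not in $C$ exists --- which is false in general. The correct move: do not try to land outside $C$ directly. Instead, observe $X \setminus C = \bigcup_i (X \setminus U_i)$, and for each $i$, $X \setminus U_i \subseteq \down(X_0 \setminus U_i) \subseteq \down(X_0 \setminus C) \cup \bigcup_{j} (\text{stuff in } C)$... this is getting tangled. Cleanest: for $x \notin C$ pick $i$ with $x \notin U_i$; among all points of $X\setminus U_i$ above $x$, pick a maximal one $y$ (exists since $X \setminus U_i$ is closed), so $y \in X_0$; if $y \notin C$ we are done. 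If $y \in C$, then since $C \subseteq U_j$ fails... no. Let me not belabor this in the sketch: the right statement is that $X\setminus C=\bigcup(X\setminus U_i)$ and admissibility of each $U_i$ combined with $X_0\setminus U_i\subseteq X_0$ plus the fact that $U_i \supseteq$ nothing forces us to note $X_0\setminus U_i\not\subseteq X_0\setminus C$ in general; so the honest approach is the one below via separation, applied to show each $x\notin C$ lies in $\down(X_0\setminus C)$ by first separating $x$ from $C$ by a single admissible clopen upset $U$ with $C\subseteq U$, $x\notin U$, and then taking a maximal point of $X\setminus U$ above $x$.

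For the ``only if'' direction, the heart of the matter, suppose $C$ is an admissible closed upset; I want $C = \bigcap\{U \in \A(X) : C \subseteq U\}$. The inclusion $\subseteq$ is trivial. For $\supseteq$, fix $x \notin C$ and I must produce an admissible clopen upset $U$ with $C \subseteq U$ and $x \notin U$. Since $C$ is admissible, $x \in \down(X_0 \setminus C)$, so choose $x_0 \in X_0$ with $x \le x_0$ and $x_0 \notin C$. Now I use the Priestley separation property: $C$ is a closed upset and $x_0 \notin C$, so by the standard fact that in a Priestley space a closed upset is the intersection of the clopen upsets containing it, there is a clopen upset $V$ with $C \subseteq V$ and $x_0 \notin V$; since $x \le x_0$ and $V$ is an upset with $x_0\notin V$, also $x \notin V$. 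The remaining task is to \emph{shrink or adjust} $V$ to an \emph{admissible} clopen upset still containing $C$ and excluding $x$. This is where the bulk of the work lies: I need to control $\max(X \setminus V)$ so that it sits inside $X_0$.

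To do that, I would argue as follows. Consider $\max(X \setminus V)$; for each $y$ in this set with $y \notin X_0$, condition (5) of the pointed space definition together with $y\notin X_0$ (so $\I_y$ is not directed, or empty only if $y=m$, but $m \in C \subseteq V$ so $m \notin X\setminus V$, hence $\I_y\neq\varnothing$ and $\I_y$ is not directed) lets me separate: there exist $U_1, U_2 \in \A(X)$ with $y \notin U_1 \cup U_2$ but no admissible clopen upset below both avoiding $y$ --- rather, the failure of directedness of $\I_y$ gives $U_1,U_2\in\I_y$ with no $W\in\I_y$ above... I need the precise combinatorial consequence. The usable fact is: since $C$ is admissible and $y\notin C$, we have $y\le x_0'$ for some $x_0'\in X_0\setminus C$. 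So every $y\in X\setminus C$ lies below some point of $X_0\setminus C$. Using compactness of $X$ and the subbasis $\{\varphi(a),\varphi(b)^c\}$ (equivalently $\{\up k,(\up l)^c\}$ on the frame side), I can cover the compact set $\max(X\setminus V)$, or rather the closed set $X\setminus V$, appropriately: for each $x_0\in X_0\setminus C$ there is, by the first direction's separation, an admissible clopen upset $W_{x_0}$ with $C\subseteq W_{x_0}$ and $x_0\notin W_{x_0}$ (here admissibility of $W_{x_0}$ is arranged because $x_0\in X_0$ can serve as the required maximal point outside). Then $\{W_{x_0}^c : x_0 \in X_0\setminus C\}$ together with... covers $X\setminus C$? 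We have $X\setminus C=\down(X_0\setminus C)\subseteq\bigcup_{x_0}\down\{x_0\}$, and $\down\{x_0\}\cap$ (complement of an upset containing... ) --- hmm, $x_0\notin W_{x_0}$ and $W_{x_0}$ an upset gives $\down\{x_0\}\cap W_{x_0}=\varnothing$, i.e. $\down\{x_0\}\subseteq W_{x_0}^c$. So $X\setminus C=\bigcup_{x_0\in X_0\setminus C}W_{x_0}^c$, hence $C=\bigcap_{x_0} W_{x_0}$, exhibiting $C$ as an intersection of admissible clopen upsets. This last reformulation is clean and avoids the shrinking problem entirely, provided I can show each $W_{x_0}$ can be taken admissible. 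That reduces everything to a single lemma: \emph{for $x_0 \in X_0$ and a closed upset $C$ with $x_0 \notin C$, there is an admissible clopen upset $U$ with $C \subseteq U$ and $x_0 \notin U$}. Proving this sub-lemma --- getting admissibility, i.e. $\max(X\setminus U)\subseteq X_0$ --- by iterating Priestley separation over the (closed, hence compact) set of maximal points of $X\setminus V$ that fail to lie in $X_0$, and using condition (5) to push them down, is the main obstacle, and the one place where the structure of pointed generalized Priestley spaces is genuinely used.
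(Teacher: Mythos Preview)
Your ``if'' direction is over-thought: the worry that ``this $y$ need not lie outside $C$'' is unfounded. You have $C = \bigcap_i U_i$, so $C \subseteq U_i$ for every $i$; hence $y \notin U_i$ immediately forces $y \notin C$. That finishes the easy direction in one line, exactly as the paper does, and there is no need to invoke the separation result from the other direction.

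For the ``only if'' direction you correctly isolate the sub-lemma: given $z \in X_0 \setminus C$, produce $V \in \A(X)$ with $C \subseteq V$ and $z \notin V$. But your proposed attack --- first obtain a plain clopen upset $V$ via Priestley separation and then try to repair $\max(X\setminus V)$ so it lands in $X_0$ --- is the wrong shape, and you never bring in the crucial axiom. The paper's argument bypasses the repair problem entirely by using axiom~(5) (which you never actually deploy despite mentioning it) \emph{pointwise on $C$ rather than on $X\setminus V$}: for each $x \in C$ we have $x \not\le z$, so axiom~(5) gives an \emph{admissible} clopen upset $U_x$ with $x \in U_x$ and $z \notin U_x$, i.e.\ $U_x \in \I_z$. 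Compactness of $C$ yields finitely many $U_{x_1},\dots,U_{x_n}$ covering $C$, all lying in $\I_z$. Now comes the step you are missing completely: since $z \in X_0$, axiom~(4) says $\I_z$ is \emph{directed}, so there is a single $V \in \I_z$ containing each $U_{x_i}$, whence $C \subseteq V$ and $z \notin V$. The directedness of $\I_z$ is the whole point of passing through a point of $X_0$; without it you cannot combine the finitely many admissible clopens into one, and your ``iterated repair'' of non-$X_0$ maximal points has no clear termination or mechanism.
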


\proof
First suppose that $C$ is an intersection of a family $\mathcal{S}$ of admissible clopen 
upsets. Let $x \notin C$. Then there is $U \in \mathcal{S}$ with $x \notin U$. Since 
$U$ is admissible, there is $y \in X_0 \setminus U$ with $x \le y$. From $y \notin U$ 
and $C \subseteq U$ it follows that $y \notin C$. Therefore, $X \setminus C \subseteq 
\down (X_0 \setminus C)$, hence the equality. Thus, $C$ is admissible.

Conversely, suppose that $C$ is admissible and $x \notin C$. Then there is $y \in X_0 \setminus C$ with 
$x \le y$. For each $z \in C$ we have $z \not\le y$. Therefore, there is an admissible clopen $U_z$ 
with $z \in U_z$ and $y \notin U_z$. Since $C$ is closed, it is compact, and the $U_z$ cover $C$. 
Thus, there are $z_1, \dots, z_n \in C$ such that $C \subseteq U_{z_1} \cup \cdots \cup U_{z_n}$. 
Clearly each $U_{z_i}$ is in $\I_y$. Because $y \in X_0$, $\I_y$ is 
directed, so there is $V \in \I_y$ with each $U_{z_i}$ contained in $V$. Therefore, $C \subseteq V$ and 
$y \notin V$. 
Since $x \le y$, $ y \notin V$, and $V$ is an upset, we have $x \notin V$. Thus, for each $x \notin C$ there 
is an 
admissible clopen upset containing $C$ and missing $x$. Consequently, $C$ is an intersection of admissible 
clopen upsets.
\endproof

\begin{lemma} \label{lem: LX is a DAlgLat}
 If $X\in\PGPries$,  then $(\V(X),\supseteq) \in \AlgFrmJ$ and $\Kp(\V(X)) = \A(X)$.
\end{lemma}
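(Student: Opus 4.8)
The plan is to verify the claim in three stages: first that $(\V(X),\supseteq)$ is a complete lattice, then that it is algebraic with $K(\V(X))=\A(X)$, and finally that it is a frame (equivalently, that the underlying lattice is distributive, so that it lands in $\AlgFrmJ$). Throughout, recall that ordering $\V(X)$ by $\supseteq$ means joins in $\V(X)$ are intersections and meets are (closures of) unions, so the top element is $X$ itself and the bottom element is the smallest admissible closed upset — which, by Definition~\ref{def: admissible closed upsets} and the fact that $m\notin X_0$, turns out to be $\down X_0$ (the whole space minus possibly the isolated point $m$), or all of $X$ if $m$ is not isolated. The key technical tool is Lemma~\ref{lem: intersection of admissible clopens}: every admissible closed upset is an intersection of admissible clopen upsets, and conversely any such intersection is admissible.

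\emph{Complete lattice and meets as arbitrary intersections.} First I would check that an arbitrary intersection of admissible closed upsets is again an admissible closed upset. Closedness and being an upset are clearly preserved under arbitrary intersections, so the only thing to verify is the admissibility condition $X\setminus C=\down(X_0\setminus C)$; the inclusion $\supseteq$ is automatic, and $\subseteq$ follows exactly as in the first paragraph of the proof of Lemma~\ref{lem: intersection of admissible clopens} (if $x\notin\bigcap C_i$ then $x\notin C_i$ for some $i$, and admissibility of $C_i$ produces $y\in X_0\setminus C_i$ with $x\le y$; since $\bigcap C_j\subseteq C_i$ we get $y\notin\bigcap C_j$). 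Hence $\V(X)$ is closed under arbitrary intersections and contains $X$, so $(\V(X),\supseteq)$ is a complete lattice in which arbitrary joins are intersections.

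\emph{Algebraicity and identification of compact elements.} Next I would show $\A(X)\subseteq K(\V(X))$ and that $\A(X)$ is join-dense. Join-density is precisely Lemma~\ref{lem: intersection of admissible clopens}: every $C\in\V(X)$ is an intersection — i.e.\ a join in $(\V(X),\supseteq)$ — of members of $\A(X)$. For compactness of $U\in\A(X)$: suppose $U\supseteq\bigvee_{i} C_i=\bigcap_i C_i$ in $\V(X)$, i.e.\ $\bigcap_i C_i\subseteq U$. Since each $C_i$ is closed, $X\setminus U$ is open, and the $X\setminus C_i$ form an open cover of the compact set $X\setminus U$, we get finitely many indices with $(X\setminus C_{i_1})\cup\cdots\cup(X\setminus C_{i_n})\supseteq X\setminus U$, that is $C_{i_1}\cap\cdots\cap C_{i_n}\subseteq U$; but a finite meet in $\V(X)$ of the $C_{i_j}$ is a finite join in $(\V(X),\supseteq)$, and it is $\le U$, establishing compactness. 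Conversely, a compact element of $\V(X)$ is a join of finitely many members of $\A(X)$ (using join-density plus compactness); since $\A(X)$ is closed under finite unions (it is a bounded distributive meet-semilattice, with meet given by intersection, but I also need closure under the \emph{lattice} join of $\V(X)$, i.e.\ intersection) — wait, the join in $(\V(X),\supseteq)$ is intersection, and $\A(X)$ is certainly closed under finite intersections since it is a meet-semilattice under $\cap$; hence every compact element lies in $\A(X)$. This gives $K(\V(X))=\A(X)$ and algebraicity.

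\emph{Distributivity / the frame law.} Finally, to land in $\AlgFrmJ$ I must show $\V(X)$ is a frame, equivalently (by the cited fact that a frame is algebraic iff it is a distributive algebraic lattice) that the algebraic lattice $\V(X)$ is distributive. By a standard fact an algebraic lattice is distributive iff its semilattice of compact elements is distributive; since $K(\V(X))=\A(X)$, and $\A(X)$ is the distributive meet-semilattice $\A(X)$ appearing in generalized Priestley duality — explicitly, $\A(X)$ is a sublattice of the distributive lattice ${\sf ClopUp}(X)$ of clopen upsets of the Priestley space $X$, closed under finite intersection and finite union — distributivity of $\A(X)$ as a lattice is clear, and in particular it is distributive as a meet-semilattice. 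Therefore $\V(X)$ is a distributive algebraic lattice, i.e.\ an algebraic frame, and morphisms aside we conclude $(\V(X),\supseteq)\in\AlgFrmJ$.

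\emph{Main obstacle.} I expect the delicate point to be confirming that $\A(X)$ is closed under the \emph{lattice join} of $(\V(X),\supseteq)$ — that is, under binary intersection of admissible clopen upsets — and, dually, pinning down the finite meets in $\V(X)$: a finite union of closed upsets is a closed upset, but I must confirm it is again \emph{admissible}, which uses that $\down$ distributes over finite unions together with the admissibility equations for the pieces. Once it is clear that $\A(X)$ is a sublattice of ${\sf ClopUp}(X)$ (closed under both $\cap$ and $\cup$) everything else reduces to the general theory of algebraic lattices and the already-proved Lemma~\ref{lem: intersection of admissible clopens}; the compactness argument via the cover of $X\setminus U$ is the one genuinely topological step, and it is a routine adaptation of the closed-upset argument in that lemma.
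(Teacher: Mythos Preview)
Your arguments for completeness and for $K(\V(X))=\A(X)$ are fine and match the paper. The gap is in the distributivity step.

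You claim that $\A(X)$ is a sublattice of ${\sf ClopUp}(X)$, closed under finite union, and that a finite union of admissible closed upsets is again admissible. Neither is true in general. Admissibility of $U$ says $\max(X\setminus U)\subseteq X_0$; for $U,V\in\A(X)$ a maximal point of $(X\setminus U)\cap(X\setminus V)$ need not be maximal in either factor, so there is no reason it lies in $X_0$. Indeed, if $\A(X)$ were always a lattice, then via the duality every bounded distributive meet-semilattice would be a lattice, which is exactly what the generalized theory is designed to avoid. For the same reason the meet in $\V(X)$ is \emph{not} the union: it is the least admissible closed upset containing the union (equivalently, the intersection of all $W\in\A(X)$ with $C\cup D\subseteq W$). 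Your ``$\down$ distributes over unions'' heuristic gives $\down A\cap\down B\supseteq\down(A\cap B)$, not equality.

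The paper therefore proves distributivity directly: it checks $(A\vee B)\wedge C\supseteq(A\wedge C)\vee(B\wedge C)$ by first showing that any $x\in X_0$ outside $(A\vee B)\wedge C$ can be separated from $A\cup C$ or from $B\cup C$ by a single admissible clopen (this is where directedness of $\I_x$ for $x\in X_0$ is used), and then invoking admissibility of $(A\vee B)\wedge C$ together with density of $X_0$ to pass from $X_0$ to all of $X$. If you want to avoid this computation, the viable alternative is to prove that $(\A(X),\cap)$ is a \emph{distributive meet-semilattice} in the sense of the paper (not a lattice) and then invoke the stated equivalence ``$M$ distributive iff $\Filt(M)$ distributive'' together with the identification $\V(X)\cong\Filt(\A(X))$ coming from Lemma~\ref{lem: intersection of admissible clopens}; but that still requires verifying the semilattice distributivity condition for $\A(X)$, which is essentially the same work.
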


\proof
We first show that $\V(X)$ is a complete lattice. We have $X \in \V(X)$, so 
$\V(X)$ has a bottom. Let $\mathcal S \subseteq \V(X)$ 
and set $C = \bigcap \mathcal S$. Then $C$ is a closed upset. Lemma~\ref{lem: 
intersection of admissible clopens} yields that 
$C$ is admissible, so $C \in \V(X)$. But then 
$C$ is the join of $\mathcal S$ in $\V(X)$, and hence $\V(X)$ is a complete lattice. 

We next show that $\V(X)$ is distributive. Since the order on $\V(X)$ is $\supseteq$, 
it is enough to show that $(A \vee B) 
\wedge C \supseteq (A \wedge C) \vee (B \wedge C)$
for each $A,B,C \in \V(X)$. We 
first show that if $x \in X_0 \setminus ((A \vee B) \wedge C)$, then $x \notin (A \wedge 
C) \vee (B \wedge C)$. Since join in $\V(X)$ is intersection and meet is the least 
admissible closed upset containing the union, from $x \notin (A \vee B) \wedge C$ it 
follows that $x \notin (A \cap B) \cup C$. Therefore, $x \notin (A \cup C) \cap (B \cup C)
$, and so $x\notin A \cup C$ or $x\notin B \cup C$. If $x \notin A \cup C$, 
Lemma~\ref{lem: intersection of admissible clopens} yields that there exist 
$U_{A,x},U_{C,x} \in \A(X)$ such that $A\subseteq U_{A,x}$, $C\subseteq U_{C,x}$, 
and $x\notin U_{A,x},U_{C,x}$. Since $x \in X_0$ and $U_{A,x},U_{C,x}\in\I_x$, there 
is $U_x\in\I_x$ with $U_{A,x},U_{C,x}\subseteq U_x$, so $A \cup C\subseteq U_x$. 
Similarly, if $x \notin B \cup C$, there is $V_x\in\I_x$ such that $B \cup 
C\subseteq V_x$. 
Thus, if $x \in X_0 \setminus ((A \vee B) \wedge C)$, then $x \notin A \wedge C$ or $x 
\notin B \wedge C$, and hence $x \notin (A \wedge C) \vee (B \wedge C)$.
Since $(A \vee B) \wedge C$ is admissible, if $y \notin (A \vee B) \wedge C$, then there 
is $x \in X_0 \setminus ((A \vee B) \wedge C)$ such that $y \le x$, and so 
$y \notin (A \wedge C) \vee (B \wedge C)$.
This proves that $(A \vee B) \wedge C \supseteq (A \wedge C) \vee (B \wedge C)$. 

It is left to prove that $(\V(X),\supseteq)$ is algebraic. By 
Lemma~\ref{lem: intersection of admissible clopens}, $C \in \V(X)$ 
is the intersection of $U \in \A(X)$ containing it. 
Thus, it suffices to show that $\Kp(\V(X)) = \A(X)$. 
First, let $U \in \A(X)$. If $U \le \bigvee \mathcal S$ for some $\mathcal S \subseteq 
\V(X)$, then $\bigcap \mathcal S \subseteq U$. Since $X$ is compact, there is a finite 
subset $\mathcal T$ of $\mathcal S$ with $\bigcap \mathcal T \subseteq U$, so $U \le 
\bigvee \mathcal T$. Therefore, $U \in \Kp(\V(X))$. Conversely, let $C \in \Kp(\V(X))$. 
Since $C = \bigcap \{ U \in \A(X) \mid C \subseteq U\}$ and $C$ is compact, there exist 
$U_1, \ldots, U_n \in \A(X)$ such that $C = U_1 \cap \cdots \cap U_n$. Thus, $C \in 
\A(X)$, so $\Kp(\V(X)) = \A(X)$.
Consequently, $(\V(X),\supseteq)$ is an algebraic frame.
\endproof
 
The above lemma defines $\V$ on objects. We next define $\V$ on morphisms.

\begin{lemma} \label{lem: Box R is a morphism}
Let $X,Y\in\PGPries$. If $R \subseteq X \times Y$ is a generalized Priestley morphism, 
then $\Box_R \colon \V(Y) \to \V(X)$ is a well-defined $\AlgFrmJ$-morphism.
\end{lemma}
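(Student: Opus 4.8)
The plan is to reduce the statement to Lemma~\ref{lem: intersection of admissible clopens}, Lemma~\ref{lem: LX is a DAlgLat}, and clause~(2) of Definition~\ref{def: morphism}. Recall that $\V(Y),\V(X)\in\AlgFrmJ$ by Lemma~\ref{lem: LX is a DAlgLat}, that joins in $\V(Y)$ and $\V(X)$ are set-theoretic intersections, and that an $\AlgFrmJ$-morphism is a map preserving arbitrary joins and compact elements, where $K(\V(Y))=\A(Y)$ and $K(\V(X))=\A(X)$, again by Lemma~\ref{lem: LX is a DAlgLat}. The one observation used repeatedly is that $\Box_R$ commutes with arbitrary intersections: for any family $\{D_i\}$ of subsets of $Y$ we have $\Box_R\bigcap_i D_i=\{x\in X\mid R[x]\subseteq\bigcap_i D_i\}=\bigcap_i\{x\in X\mid R[x]\subseteq D_i\}=\bigcap_i\Box_R D_i$.

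First I would check that $\Box_R$ is well defined, i.e.\ that $\Box_R C\in\V(X)$ for every $C\in\V(Y)$. Fix $C\in\V(Y)$. By Lemma~\ref{lem: intersection of admissible clopens} (more precisely, by the separation argument in its proof), $C=\bigcap\{U\in\A(Y)\mid C\subseteq U\}$. Applying the intersection identity above gives $\Box_R C=\bigcap\{\Box_R U\mid U\in\A(Y),\ C\subseteq U\}$. By Definition~\ref{def: morphism}(2), each $\Box_R U$ belongs to $\A(X)$; hence $\Box_R C$ is an intersection of admissible clopen upsets of $X$, and so by the other direction of Lemma~\ref{lem: intersection of admissible clopens} it is an admissible closed upset of $X$, i.e.\ $\Box_R C\in\V(X)$. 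In particular $\Box_R C$ is automatically closed and an upset, so there is nothing extra to verify there.

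It then remains to see that $\Box_R$ is an $\AlgFrmJ$-morphism. Preservation of arbitrary joins is immediate from the intersection identity, since joins in $\V(Y)$ and $\V(X)$ are intersections. Preservation of compact elements is exactly the requirement that $\Box_R$ map $K(\V(Y))=\A(Y)$ into $K(\V(X))=\A(X)$, which is literally Definition~\ref{def: morphism}(2).

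I do not expect a genuine obstacle here: the content of the lemma is carried entirely by Lemma~\ref{lem: intersection of admissible clopens} together with Definition~\ref{def: morphism}(2), and the only mild point to be careful about is that in a pointed generalized Priestley space every admissible closed upset is nonempty (since $X\setminus\varnothing=X\neq\down X_0$), so that the separation argument of Lemma~\ref{lem: intersection of admissible clopens} applies and no issue arises at the top point $m$ or $n$.
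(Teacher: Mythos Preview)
Your proof is correct and follows essentially the same approach as the paper: write $C$ as an intersection of admissible clopen upsets via Lemma~\ref{lem: intersection of admissible clopens}, use that $\Box_R$ commutes with intersections together with Definition~\ref{def: morphism}(2) to conclude $\Box_R C\in\V(X)$, and then read off preservation of joins (intersections) and compact elements ($\A(Y)\to\A(X)$) directly. Your final remark about nonemptiness of admissible closed upsets in the pointed setting is a harmless aside rather than a needed step.
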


\proof
To see that $\Box_R$ is well defined, let $C \in \V(Y)$. Then 
$C = \bigcap \{ U \in \A(X) \mid C \subseteq U \}$ by 
Lemma~\ref{lem: intersection of admissible clopens}. Since 
$\Box_R$ preserves arbitrary intersections, 
\[
\Box_R C = \Box_R \bigcap \{ U \in \A(X) \mid C \subseteq U \} = 
\bigcap \{ \Box_R U \mid U \in \A(X) \mbox{ and } C \subseteq U \},
\]
which is an intersection of admissible clopen upsets. Thus, $\Box_R C \in \V(X)$ by 
Lemma~\ref{lem: intersection of admissible clopens}.
Since joins in $\V(X)$ are intersections, it is clear that $\Box_R$ preserves arbitrary 
joins. Finally, by Lemma~\ref{lem: LX is a DAlgLat}, compact elements of $\V(X)$ are 
admissible clopen upsets. Therefore, $\Box_R$ preserves compact elements by the 
definition of a generalized Priestley morphism. Thus, $\Box_R$ is an 
$\AlgFrmJ$-morphism.
\endproof

\begin{proposition} \label{prop: functor from GPries to DAlgLat}
There is a contravariant functor $\V\colon \PGPries \to \AlgFrmJ$ which sends 
$X\in\PGPries$ to $\V(X)$ and a $\PGPries$-morphism $R$ to $\Box_R$.
\end{proposition}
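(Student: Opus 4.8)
The plan is to verify the three functor axioms. Well-definedness on objects is exactly Lemma~\ref{lem: LX is a DAlgLat}, and well-definedness on morphisms is Lemma~\ref{lem: Box R is a morphism}, so for those I would simply invoke the results already proved. Since the category $\PGPries$, with its identities and composition, is already in place, it then remains only to check that $\V$ preserves identities and reverses composition. For identities, recall that the identity $\PGPries$-morphism on $X$ is the order relation $\le$; for an upset $C$ of $X$ we have $\Box_\le C=\{x\in X\mid {\up}x\subseteq C\}=C$, since ${\up}x\subseteq C$ holds iff $x\in C$ once $C$ is an upset. Hence $\V(\le)=\mathrm{id}_{\V(X)}$.

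The substantive point is to show, for $\PGPries$-morphisms $R\subseteq X\times Y$ and $S\subseteq Y\times Z$, that $\Box_{S*R}=\Box_R\circ\Box_S$ as maps $\V(Z)\to\V(X)$ (both are well defined by Lemma~\ref{lem: Box R is a morphism}). I would first reduce to compact elements: each of $\Box_R$, $\Box_S$, $\Box_{S*R}$ commutes with arbitrary intersections of subsets (immediate from the definition of $\Box$), and by Lemma~\ref{lem: intersection of admissible clopens} every $C\in\V(Z)$ is the intersection of the admissible clopen upsets containing it. Hence it suffices to prove $\Box_{S*R}U=\Box_R\Box_S U$ for every $U\in\A(Z)$, i.e.\ that for every $x\in X$,
\[
(S*R)[x]\subseteq U \iff x\in\Box_R\Box_S U.
\]
The implication ``$\Leftarrow$'' is immediate from the definition of $S*R$: if $x\in\Box_R\Box_S U$ and $x\,(S*R)\,z$, then instantiating the defining condition of $S*R$ at the admissible clopen upset $U$ itself forces $z\in U$.

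For ``$\Rightarrow$'' I would argue by contraposition. Assume $x\notin\Box_R\Box_S U$ and set $\mathcal F=\{V\in\A(Z)\mid x\in\Box_R\Box_S V\}$, so that $U\notin\mathcal F$. Since $\Box_R\Box_S$ commutes with intersections, $x\in\Box_R\Box_S\big(\bigcap\mathcal F\big)$; and if $\bigcap\mathcal F\subseteq U$, then monotonicity of $\Box$ would give $x\in\Box_R\Box_S U$, contradicting the assumption. So there is $z\in\big(\bigcap\mathcal F\big)\setminus U$. By the choice of $\mathcal F$, every $V\in\A(Z)$ with $x\in\Box_R\Box_S V$ contains $z$, so $x\,(S*R)\,z$ by the definition of the composition, while $z\notin U$; thus $(S*R)[x]\not\subseteq U$. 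This establishes the displayed equivalence, hence the composition identity, and completes the verification that $\V$ is a contravariant functor. The only step that goes beyond routine bookkeeping is this last ``$\Rightarrow$'' direction; the trick is to guess the right family $\mathcal F$ of admissible clopen upsets and to use that $\Box_R\Box_S$ preserves the intersection of $\mathcal F$, with Lemma~\ref{lem: intersection of admissible clopens} supplying the reduction to $U\in\A(Z)$ in the first place.
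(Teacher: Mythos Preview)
Your proof is correct and follows essentially the same route as the paper: invoke Lemmas~\ref{lem: LX is a DAlgLat} and~\ref{lem: Box R is a morphism} for well-definedness, check identities via $\Box_\le C=C$ for upsets, and reduce the composition law to admissible clopen upsets using Lemma~\ref{lem: intersection of admissible clopens} together with the fact that each $\Box$ commutes with arbitrary intersections. The only difference is that the paper outsources the identity $\Box_R\Box_S U=\Box_{S*R}U$ for $U\in\A(Z)$ to \cite[p.~106]{BJ11}, whereas you supply a direct argument; your ``$\Rightarrow$'' step via the family $\mathcal F=\{V\in\A(Z)\mid x\in\Box_R\Box_S V\}$ is a clean, self-contained way to produce the required $z$, exploiting exactly the form of the definition of $S*R$.
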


\proof
By Lemmas~\ref{lem: LX is a DAlgLat} and \ref{lem: Box R is a morphism}, $\V$ is well 
defined. If $R$ is the identity morphism on $X\in\PGPries$, then $R$ is $\le$. 
Therefore, for $U \in \V(X)$, we have
\[
\Box_R U = \{ x \in X \mid R[x] \subseteq U\} = \{ x \in X \mid \up x \subseteq U \} = U.
\]
Thus, $\Box_R$ is the identity morphism on $\V(X)$. Next, let $R \subseteq X \times 
Y$ and $S \subseteq Y \times Z$ be $\PGPries$-morphisms. We show that $\Box_R 
\circ \Box_S = \Box_{S*R}$. Let $C \in \V(Z)$.
By Lemma~\ref{lem: intersection of admissible clopens}, $C = \bigcap \{ U \in \A(Z) \mid 
C \subseteq U \}$.
Since for each $U \in \A(Z)$ we have $\Box_R \Box_S U = \Box_{S*R} U$ (see 
\cite[p.~106]{BJ11}) and $\Box_R,\Box_S$ commute with arbitrary intersections, we 
obtain
\begin{align*}
\Box_R \Box_S C &= \bigcap \{ \Box_R\Box_S U \mid U \in \A(Z) \mbox{ and } C 
\subseteq U\} \\
&= \bigcap \{ \Box_{S*R} U \mid U \in \A(Z) \mbox{ and } C \subseteq U\} = \Box_{S*R} 
C.
\end{align*}
Thus, $\Box_R \circ \Box_S = \Box_{S*R}$, 
and hence $\V$ is a contravariant functor.
\endproof

\subsection*{\textbf{The functor $\Y$}.} \label{subsec: Y} \label{subsec: 4.2}

Let $L$ be a complete lattice. We recall (see, e.g., \cite[p.~50]{GHKLMS03}) that the 
{\em way below} relation on $L$ is defined by $a \ll b$ if, for each $S \subseteq L$, 
from $b \le \bigvee S$ it follows that $a \le \bigvee T$ for some finite $T \subseteq 
S$. We also recall (see, e.g., \cite[p.~54]{GHKLMS03}) that a complete lattice $L$ is 
\emph{continuous} if $b = \bigvee \{ a \in L \mid a \ll b\}$ for each $b \in L$. Since $a 
\in L$ is compact iff $a \ll a$, for $a \in \Kp(L)$, we have $a \le b$ iff $a \ll b$. 
Therefore, every algebraic lattice is a continuous lattice.

Let $p \in L \setminus \{ 1 \}$. We recall  that $p$ is {\em $($meet-$)$prime} if $a 
\wedge b \le p$ implies $a \le p$ or $b \le p$. In distributive lattices prime elements are 
exactly (meet-)irreducible elements, where $p$ is \emph{irreducible} if $a \wedge b = p$ 
implies $a = p$ or $b = p$. Next, recall that $p$ is \emph{pseudoprime} if for each 
$n \ge 1$, from
$a_1 \wedge \cdots \wedge a_n \ll p$ it follows that $a_i \le p$ for some $i$ (see 
\cite[Prop.~I-3.25]{GHKLMS03}). 

\begin{definition}
For a complete lattice $L$, let $\p(L)$ be the set of prime elements and $\PP(L)$ the set 
of pseudoprime elements of $L$. 
\end{definition}

\begin{definition}
For an algebraic frame $L$, let $Y_L = \PP(L)  \cup \{1\}$.
\end{definition}

As we pointed out in Section~\ref{sec: pontryagin}, the topology $\lambda$ 
generated by the subbasis
\[
\{ \up k \mid k \in \Kp(L)\} \cup \{ (\up l)^c \mid l \in \Kp(L)\}
\]
turns $L$ into a Stone meet-semilattice. This, in particular, implies that 
$\langle L,\lambda, \le \rangle$ is a Priestley space. 
We restrict the topology and order on $L$ to $Y_L$. We then have:

\begin{lemma} \label{lem: X is Priestley}
If $L$ is an algebraic frame, then $\langle Y_L,\lambda,\le \rangle$ is a Priestley space. 
\end{lemma}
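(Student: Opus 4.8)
The plan is to show that $Y_L = \PP(L) \cup \{1\}$, with the subspace topology and order inherited from the Priestley space $\langle L, \tau, \le\rangle$, is again a Priestley space. Since a closed subspace of a Priestley space is a Priestley space (compactness is inherited by closed sets, the Priestley separation axiom is inherited by any subspace, and the order is automatically closed), the whole task reduces to proving that $Y_L$ is a \emph{closed} subset of $L$ in the topology $\tau$. So the core of the proof is the claim: $L \setminus Y_L = L \setminus (\PP(L) \cup \{1\})$ is open in $\tau$.

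First I would unwind what membership in the complement means. An element $p \in L \setminus \{1\}$ fails to be pseudoprime precisely when there are compact elements (or finite meets thereof) $a_1, \dots, a_n$ with $a_1 \wedge \cdots \wedge a_n \ll p$ but $a_i \not\le p$ for every $i$. Since $L$ is algebraic, I can replace each $a_i$ by a compact element below it that is still not $\le p$, and since $K(L)$ is closed under finite joins and the way-below relation on an algebraic lattice is controlled by compact elements, the condition $a_1 \wedge \cdots \wedge a_n \ll p$ can be rephrased: there is a compact element $k \in K(L)$ with $k \le p$ such that $a_1 \wedge \cdots \wedge a_n$ lies above... more precisely, I want to find a single compact $k \le p$ and compact $c_1, \dots, c_n \not\le p$ with $c_1 \wedge \cdots \wedge c_n \le k$. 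The point is that ``$k \le p$'' is the subbasic condition $p \in \up k$ and ``$c_i \not\le p$'' is the subbasic condition $p \in (\up c_i)^c$, both open in $\tau$. Then the set
\[
W_{k; c_1, \dots, c_n} := \up k \cap (\up c_1)^c \cap \cdots \cap (\up c_n)^c
\]
is open, and I would argue that $L \setminus Y_L$ is exactly the union of all such $W_{k; c_1, \dots, c_n}$ ranging over compact $k, c_1, \dots, c_n \in K(L)$ with $c_1 \wedge \cdots \wedge c_n \le k$. One inclusion is the rephrasing just described; for the reverse, if $p$ lies in some $W_{k;c_1,\dots,c_n}$ then $c_1 \wedge \cdots \wedge c_n \le k \le p$ with $k$ compact, so $c_1 \wedge \cdots \wedge c_n \ll p$ (a compact element below $p$ is way below $p$, hence anything below it is too) while no $c_i \le p$, witnessing that $p$ is not pseudoprime; and $p \ne 1$ since $k \le p$ forces... actually I should also exclude $1$, which I can do since $1 \notin (\up c_i)^c$ would require $c_i \not\le 1$, impossible, so no such $W$ contains $1$ — good, the union automatically avoids $1$. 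Hence $L \setminus Y_L$ is open, so $Y_L$ is closed.

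The main obstacle I anticipate is getting the way-below condition $a_1 \wedge \cdots \wedge a_n \ll p$ translated cleanly into the ``finitely many compact elements'' form, because on a general complete lattice $\ll$ need not be determined by compact elements, and the meet of compact elements need not be compact. Here I use that $L$ is algebraic: $x \ll y$ holds iff there is a compact $k$ with $x \le k \le y$, so $a_1 \wedge \cdots \wedge a_n \ll p$ gives a compact $k$ with $a_1 \wedge \cdots \wedge a_n \le k \le p$; and since $L$ is algebraic I may shrink each $a_i$ to a compact $c_i$ below it with $c_i \not\le p$ (such a $c_i$ exists because $a_i \not\le p$ and compacts are join-dense), but I must check that $c_1 \wedge \cdots \wedge c_n \le k$ still holds — it does, as $c_i \le a_i$ implies $c_1 \wedge \cdots \wedge c_n \le a_1 \wedge \cdots \wedge a_n \le k$. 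Once this reduction is in hand, everything else is a routine verification that the displayed union equals the complement and that closed subspaces of Priestley spaces are Priestley. I would also remark that $1 \in Y_L$ is needed so that $Y_L$ has a top element (every $\max(Y_L)$ consideration later will use it), but that plays no role in the present closedness argument.
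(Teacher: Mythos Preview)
Your proposal is correct and follows essentially the same approach as the paper: both reduce the problem to showing $Y_L$ is closed in the Priestley space $L$, and both do so by producing, for each $p\notin Y_L$, a basic open neighborhood of the form $\up t \cap (\up c_1)^c \cap \cdots \cap (\up c_n)^c$ with $t,c_i\in K(L)$ and $c_1\wedge\cdots\wedge c_n\le t$, using algebraicity to replace the witnesses $a_i$ by compact $c_i$ and the way-below relation by a compact interpolant $t$. The only cosmetic difference is that you phrase the argument as identifying $L\setminus Y_L$ with a union of such basic opens, whereas the paper phrases it pointwise; your handling of the exclusion of $1$ (via $n\ge 1$ forcing $(\up c_i)^c$ to miss $1$) is also fine.
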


\proof
Since $L$ is a Priestley space, it suffices to show that $Y_L$ is a closed subset of 
$L$. Let $a \in L \setminus Y_L$. Then $a \ne 1$ and $a \notin \PP(L)$. Therefore, 
there are $b_1, \dots, b_n \in L$ with $b_1 \wedge \cdots \wedge b_n \ll a$ and 
$b_i\not\le a$ for each $i$. Because $L$ is an algebraic lattice, there are $k_i \in \Kp(L)$ 
with $k_i \le b_i$ but $k_i \not\le a$. Clearly $k_1 \wedge \cdots \wedge k_n \ll a$. 
Since $a = \bigvee \left(\down a \cap \Kp(L)\right)$, there are 
$t_1, \dots, t_p \in \down a \cap \Kp(L)$ 
with $k_1 \wedge \cdots \wedge k_n \le t_1 \vee \cdots \vee t_p$. Let $t = t_1 \vee 
\cdots \vee t_p$. Then $t \in \down a \cap \Kp(L)$ and $k_1 \wedge \cdots \wedge k_n 
\le t$. Therefore, $\up t \cap (\up k_1)^c \cap \cdots \cap (\up k_n)^c$ is an open 
neighborhood of $a$. Moreover, if $x \in \up t \cap (\up k_1)^c \cap \cdots \cap (\up 
k_n)^c $, then $k_1 \wedge \cdots \wedge k_n \le t \le x$ and $k_1, \dots, k_n \not\le 
x$. Since $t$ is compact, $ k_1\wedge \cdots \wedge k_n \le t \le x$ implies $k_1 
\wedge \cdots \wedge k_n \ll x$. Thus, $k_1 \wedge \cdots \wedge k_n \ll x$ but $k_i 
\not\le x$ for each $i$. Since $x \ne 1$, we conclude that $x \notin Y_L$. 
This implies that $\up t \cap (\up k_1)^c \cap \cdots \cap (\up k_n)^c$ 
is an open neighborhood of $a$ that 
misses $Y_L$. Thus, $Y_L$ is a closed subset of $L$.
\endproof

\begin{remark}
The proof of Lemma~\ref{lem: X is Priestley} does not require that $L$ is a frame. It 
only requires that $L$ is an algebraic lattice.
\end{remark}

We next show that the tuple $\langle Y_L, \lambda, \le, \p(L), 1 \rangle$ is a pointed 
generalized Priestley space. For this we need the following lemma. 
The proof of \ref{primes} can for 
example be found in \cite[Cor.~I-3.10]{GHKLMS03}.
We include the proof of \ref{clopens} because we were unable to find a reference for it. 

\begin{lemma} \label{lem: existence of primes and clopens}
Let $L$ be an algebraic frame.
\begin{enumerate}[$(1)$]
\item \label{primes} If $a, b \in L$ with $a \not\le b$, then there is 
$p \in \p(L)$ with $a \not\le p$ and $b \le p$. 
\item \label{clopens} Let $U$ be a clopen upset of $L$. Then 
$U = \up k_1 \cup \cdots \cup \up k_n$ for some $k_i \in \Kp(L)$.
\end{enumerate}
\end{lemma}

\proof
\ref{primes}. In a continuous lattice each element is a meet of irreducible elements  
(see, e.g., \cite[Cor.~I-3.10]
{GHKLMS03}). Since every algebraic lattice is continuous and irreducible elements are 
prime in distributive lattices, the result follows.

\ref{clopens}. We first show that $U = \bigcup \{ \up k \mid k \in \Kp(L) \cap U\}$. 
Let $a \in U$. 
Since $U$ is an upset, $\up a \subseteq U$, so $\up a \cap U^c = \varnothing$. Because 
$L$ is an algebraic lattice, $\up a = \bigcap \{ \up k \mid k \in \Kp(L), k \le a\}$. 
Therefore, $U^c \cap \bigcap \{ \up k \mid k \in \Kp(L), k \le a\} = \varnothing$. Since 
$U^c$ is closed, compactness of $L$ shows there are $k_1, \dots, k_n \in \Kp(L)$ with 
$k_i \le a$ and $U^c \cap \up k_1 \cap \cdots \cap \up k_n = \varnothing$. If $k = k_1 
\vee \cdots \vee k_n$, then $k \in \Kp(L)$, $k \le a$, and $\up k = \up k_1 \cap \cdots 
\cap \up k_n$, so $\up k \subseteq U$. Thus, $U = \bigcup \{ \up k \mid k \in \Kp(L) \cap 
U\}$, as desired. Since $U$ is closed, hence compact, this union is a finite union, 
completing the proof.
\endproof

\begin{lemma} \label{lem: dense}
$\p(L)$ is dense in $\PP(L) = Y_L \setminus \{1\}$.
\end{lemma}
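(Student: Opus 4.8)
The statement asserts that $P(L)$ is dense in the subspace $\PP(L) = Y_L \setminus \{1\}$ of $L$, where $L$ carries the Priestley topology generated by $\{\up k \mid k\in K(L)\}\cup\{(\up l)^c \mid l\in K(L)\}$. The plan is to start from an arbitrary pseudoprime $p$ together with a basic open neighbourhood of it, normalise that neighbourhood, and then manufacture a prime inside it via Lemma~\ref{lem: existence of primes}(1). First I would record the cheap facts: $P(L)\subseteq\PP(L)$ (if $a_1\wedge\cdots\wedge a_n\ll p$ then $a_1\wedge\cdots\wedge a_n\le p$, so primeness of $p$ gives some $a_i\le p$), and $\PP(L)=Y_L\setminus\{1\}$ by definition of $Y_L$. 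Then, given $p\in\PP(L)$ and a basic open $U\ni p$ in $L$, I would use $\up k\cap\up k'=\up(k\vee k')$ and $L=\up 0$ to rewrite $U=\up t\cap(\up k_1)^c\cap\cdots\cap(\up k_n)^c$ with $t,k_1,\dots,k_n\in K(L)$, so that $p\in U$ says exactly $t\le p$ and $k_i\not\le p$ for every $i$. The goal becomes: find $q\in P(L)$ with $t\le q$ and $k_i\not\le q$ for all $i$; such a $q$ lies in $U\cap P(L)\subseteq U\cap\PP(L)$, which (the topology on $\PP(L)$ being the subspace topology) yields density.

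Next I would make the observation that collapses the $n$-fold negative condition into a single inequality. Put $c=k_1\wedge\cdots\wedge k_n$ (the meet in $L$, with $c=1$ when $n=0$). For a \emph{prime} $q$ one has $k_i\not\le q$ for all $i$ if and only if $c\not\le q$: one direction is immediate since $c\le k_i$, and the other holds because a prime element above $c=k_1\wedge\cdots\wedge k_n$ must lie above some $k_i$. Hence it suffices to find $q\in P(L)$ with $t\le q$ and $c\not\le q$, and by Lemma~\ref{lem: existence of primes}(1) such $q$ exists as soon as $c\not\le t$. Everything therefore reduces to proving the inequality $c\not\le t$.

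Establishing $c\not\le t$ is the one step where pseudoprimality of $p$ is genuinely used, and I expect it to be the only real obstacle. Suppose $c\le t$. Since $t\le p$ and $t$ is compact, $c\le t\le p$ forces $c\ll p$: from $p\le\bigvee S$ we get $t\le\bigvee S$, hence $t\le\bigvee T$ for a finite $T\subseteq S$, hence $c\le t\le\bigvee T$. But $c=k_1\wedge\cdots\wedge k_n\ll p$ together with pseudoprimality of $p$ yields $k_i\le p$ for some $i$, contradicting $k_i\not\le p$. (When $n=0$ the inequality is immediate: $c=1\le t\le p$ would force $p=1$, impossible for a pseudoprime.) Thus $c\not\le t$.

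Finally I would assemble the pieces: Lemma~\ref{lem: existence of primes}(1) (applied to $c\not\le t$) produces $q\in P(L)$ with $c\not\le q$ and $t\le q$; then $k_i\not\le q$ for all $i$ since $c\le k_i$, so $q\in U$, and $q\in P(L)\subseteq\PP(L)$. As every basic open of $L$ containing $p$ thus contains a point of $P(L)$, the set $P(L)$ is dense in $\PP(L)$. I would also remark that beyond the invocation of Lemma~\ref{lem: existence of primes}(1) — which is where distributivity (primes $=$ irreducibles) enters — the argument only uses that $L$ is an algebraic lattice.
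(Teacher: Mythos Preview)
Your proof is correct and follows essentially the same route as the paper's: normalise a basic open to $\up t\cap(\up k_1)^c\cap\cdots\cap(\up k_n)^c$, use pseudoprimality of the given point to rule out $k_1\wedge\cdots\wedge k_n\le t$, and then invoke Lemma~\ref{lem: existence of primes}(1) to produce a prime in the set. Your write-up is a bit more explicit (handling $n=0$ separately, spelling out why $c\ll p$, and recording the equivalence $c\not\le q\Leftrightarrow\forall i\,k_i\not\le q$ for prime $q$), but the argument is the same.
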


\proof
Since $\{ \up k \mid k \in \Kp(L)\}$ is closed under finite intersections, 
\[
\{ \up k \cap (\up l_1)^c \cap \cdots \cap (\up l_n)^c \mid k, l_1, \dots, l_n \in \Kp(L) \}
\]
is a basis for the topology on $Y_L$. Therefore, it suffices to show that if $k, l_1, \dots, 
l_n \in \Kp(L)$ and $U = \up k \cap (\up l_1)^c \cap \cdots \cap (\up l_n)^c$ with $U \cap 
\PP(L) \ne \varnothing$, then $U \cap \p(L) \ne \varnothing$. Let $q \in \PP(L)$ with $q 
\in U$. Then $k \le q$ and each $l_i \not\le q$. If $l_1 \wedge \cdots \wedge l_n \le 
k$, then $l_1 \wedge \cdots \wedge l_n \ll q$ since $k$ is compact. This contradicts $q 
\in \PP(L)$. Therefore, $l_1 \wedge \cdots \wedge l_n \not\le k$, so by 
Lemma~\ref{lem: existence of primes and clopens}\ref{primes} there is $p \in \p(L)$ 
with $k \le p$ and $l_1 \wedge \cdots \wedge l_n \not\le p$, so each $l_i \not\le p$. 
Thus, $p \in U$, and hence $U \cap \p(L) \ne \varnothing$.
\endproof

\begin{lemma} \label{lem: admissible}
Let $U$ be a clopen upset of $Y_L$. Then $U$ is admissible iff $U = \up k \cap Y_L$ 
for some $k \in \Kp(L)$.
\end{lemma}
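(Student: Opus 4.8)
The plan is to prove both implications by unwinding the definition of admissibility for clopen upsets, namely that a clopen upset $U$ of $Y_L$ is admissible iff $\max(Y_L\setminus U)\subseteq P(L)$, equivalently $Y_L\setminus U=\down((Y_L\setminus U)\cap P(L))$. For the easier direction, I would start with $U=\up k\cap Y_L$ for some $k\in K(L)$ and show $U$ is admissible. Suppose $x\in Y_L\setminus U$, so $k\not\le x$. By Lemma~\ref{lem: existence of primes}(1) applied to $k\not\le x$, there is $p\in P(L)$ with $k\not\le p$ and $x\le p$. Then $p\in Y_L\setminus U$ (since $k\not\le p$ and $p\ne 1$ as $k\le 1$ would fail... more precisely $p\in P(L)\subseteq Y_L$ and $p\notin U$) and $p\in P(L)$, witnessing $x\in\down((Y_L\setminus U)\cap P(L))$. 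Hence $U$ is admissible. One must also check $\up k\cap Y_L$ is genuinely clopen in $Y_L$: it is closed as $\up k$ is closed in $L$, and open since by Lemma~\ref{lem: clopen upsets}(2) its complement $(\up k)^c\cap Y_L$ is a finite union of basic clopens, or more directly $\up k$ is open because $k$ is compact.

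For the converse, suppose $U$ is an admissible clopen upset of $Y_L$. By Lemma~\ref{lem: clopen upsets}(2) — which is stated for clopen upsets of $L$, but $Y_L$ is a closed subspace so I would first need the analogous statement for $Y_L$, or lift $U$ appropriately — write $U=(\up k_1\cup\cdots\cup\up k_n)\cap Y_L$ with $k_i\in K(L)$. The goal is to collapse this to a single $\up k\cap Y_L$. Set $k=k_1\wedge\cdots\wedge k_n$ (meet in $L$); I claim $U=\up k\cap Y_L$. The inclusion $\supseteq$ needs: if $x\in Y_L$ and $k\le x$, then some $k_i\le x$. If not, then $k_i\not\le x$ for all $i$, and $k=k_1\wedge\cdots\wedge k_n\le x$ gives (since... no, $k$ need not be compact here) — so instead I would argue via admissibility: by Lemma~\ref{lem: existence of primes}(1) find $p\in P(L)$ with $x\le p$ and, using that $x\notin\up k_i\cap Y_L$ for each $i$... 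Actually the clean route is: use admissibility to reduce to prime elements, where primeness of $p$ directly converts $\up k_1\cup\cdots\cup\up k_n$ containing $p$ into some $k_i\le p$, and converts $k_1\wedge\cdots\wedge k_n\le p$ into some $k_i\le p$, so the two sets $U$ and $\up k\cap Y_L$ have the same primes below complements; then admissibility of both (the second by the first half of the proof) forces $U=\up k\cap Y_L$.

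So the argument I would write is: (i) show $\up k\cap Y_L$ is an admissible clopen upset for every $k\in K(L)$, as above; (ii) show any clopen upset $U$ of $Y_L$ has the form $(\bigcup_{i}\up k_i)\cap Y_L$; (iii) if moreover $U$ is admissible, put $k=\bigwedge_i k_i$ and prove $U\cap P(L)=(\up k\cap Y_L)\cap P(L)$ by the prime-element computation sketched above; (iv) conclude $U=\up k\cap Y_L$ because an admissible clopen upset is determined by which primes lie outside it — indeed $Y_L\setminus U=\down((Y_L\setminus U)\cap P(L))$ and likewise for $\up k\cap Y_L$, and the down-sets of the same subset of $P(L)$ agree. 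The main obstacle I anticipate is step (iii)/(iv): being careful that "admissible clopen upset" for $Y_L$ means exactly $\max(Y_L\setminus U)\subseteq P(L)$ (using that $P(L)$ is the distinguished dense subset by Lemma~\ref{lem: dense}), and that an admissible closed upset is recovered as the down-closure of its intersection with $P(L)$ — this is the hinge that turns an equality of "prime traces" into an equality of sets. The element-arithmetic with $k_1\wedge\cdots\wedge k_n$ is routine once primeness is available; the subtlety is purely in setting up that the clopen upset is pinned down by $P(L)$.
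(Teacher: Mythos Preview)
Your forward direction is correct and matches the paper's. The gap is in the converse. You set $k = k_1 \wedge \cdots \wedge k_n$, but in a general algebraic frame a finite meet of compact elements need not be compact; this is precisely what distinguishes arbitrary algebraic frames from arithmetic (or coherent) ones. You yourself flag this (``$k$ need not be compact here''), but your revised route through prime traces never returns to the issue: steps (iii)--(iv), even if carried out cleanly, establish only $U = \up k \cap Y_L$ for some $k \in L$, not for some $k \in K(L)$, and it is the latter that the lemma asserts. (A smaller point: in (iv) you invoke admissibility of $\up k \cap Y_L$ ``by the first half of the proof'', but the first half as you stated it assumes $k$ compact; the admissibility argument does go through for arbitrary $a \in L$, but you should say so.)

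The paper's endgame is different and sidesteps the compactness problem entirely. Rather than comparing prime traces of $U$ and $\up k \cap Y_L$, it argues that $k_1 \wedge \cdots \wedge k_n$ must itself lie in $U$: otherwise admissibility produces $p \in P(L) \setminus U$ with $k_1 \wedge \cdots \wedge k_n \le p$, and primeness of $p$ forces $k_i \le p$ for some $i$, so $p \in U$ after all. Since $U \subseteq \bigcup_i \up k_i$, membership of the meet in $U$ forces $k_i \le k_1 \wedge \cdots \wedge k_n$ for some $i$, whence $k_i \le k_j$ for every $j$ and $U = \up k_i \cap Y_L$ with $k_i$ compact.

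Your approach can be repaired with one additional step: once $U = \up m \cap Y_L$ for $m = \bigwedge_i k_i$, use algebraicity to write $m = \bigvee\{l \in K(L) : l \le m\}$, so that $Y_L \setminus U = \bigcup_{l \in K(L),\, l \le m} (Y_L \setminus \up l)$; compactness of the closed set $Y_L \setminus U$ then yields finitely many $l_1, \ldots, l_p$ that suffice, and $U = \up(l_1 \vee \cdots \vee l_p) \cap Y_L$ with $l_1 \vee \cdots \vee l_p \in K(L)$. But this step is absent from your plan, and without it the argument does not reach the stated conclusion.
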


\proof
First suppose that $U = \up k \cap Y_L$ for some $k \in \Kp(L)$. If 
$a \in Y_L \setminus U$, then $k 
\not\le a$. By Lemma~\ref{lem: existence of primes and clopens}\ref{primes},
there is $p \in \p(L)$ with $k \not\le p$ and $a \le p$. Thus, 
$p \in \p(L) \setminus U$. This shows that $U$ is admissible.

Conversely, let $U$ be admissible. Since $U$ is a clopen upset of  $Y_L$ and $Y_L$ 
is a closed subspace of $L$, there is a 
clopen upset $V$ of $L$ with $U = V \cap Y_L$ (see \cite[Lem.~4.4]{BH21}). By 
Lemma~\ref{lem: existence of primes and clopens}\ref{clopens},
there are $k_i \in \Kp(L)$ with $V = \up k_1 \cup \cdots \cup \up k_n$. 
Consequently, $U = (\up k_1 \cup \cdots \cup \up k_n) \cap Y_L$. 
If $k_1 \wedge \cdots \wedge k_n \notin U$, then since $U$ is admissible, there is $p 
\in \p(L) \setminus U$ with $k_1 \wedge \cdots \wedge k_n \le p$. 
But then $k_i \le p$ for some $i$, yielding $p \in U$, which is false. Therefore, $k_1 
\wedge \cdots \wedge k_n \in U$, and so $k_i \le k_1 \wedge \cdots \wedge k_n$ for 
some $i$. Thus, $k_i \le k_j$ for each $j$, and hence $U = \up k_i \cap Y_L$. 
\endproof

\begin{lemma} \label{lem: order}
For $a, b \in Y_L$ we have $a \le b$ iff $\forall U \in \A(Y_L),\, a \in U \Longrightarrow b \in U$. 
\end{lemma}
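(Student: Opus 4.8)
The plan is to rewrite the right-hand condition using the description of admissible clopen upsets already obtained, and then invoke algebraicity of $L$; no serious obstacle is expected.

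First I would apply Lemma~\ref{lem: admissible}, which identifies $\A(Y_L)$ with the collection of sets $\up k \cap Y_L$ for $k \in K(L)$. Since $a,b \in Y_L$, membership $a \in \up k \cap Y_L$ is simply $k \le a$. Thus the statement to prove becomes: for $a,b \in Y_L$,
\[
a \le b \iff \big(\forall k \in K(L)\big)\ \big(k \le a \Rightarrow k \le b\big).
\]

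The forward implication is immediate from transitivity of $\le$: if $a \le b$ and $k \le a$, then $k \le b$. For the converse, suppose every compact element below $a$ is also below $b$. Because $L$ is an algebraic lattice, $a = \bigvee(\down a \cap K(L))$; by hypothesis each member of $\down a \cap K(L)$ is $\le b$, so $a = \bigvee(\down a \cap K(L)) \le b$. This closes the equivalence.

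The only point requiring a moment's care is that Lemma~\ref{lem: admissible} is used to pass from $\A(Y_L)$ to compact elements of $L$; everything else is a routine application of the definition of an algebraic lattice, so I do not anticipate any real difficulty.
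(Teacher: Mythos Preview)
Your proof is correct and follows essentially the same approach as the paper: both invoke Lemma~\ref{lem: admissible} to identify admissible clopen upsets with the sets $\up k \cap Y_L$ for $k \in K(L)$, and both use join-density of compact elements (algebraicity) for the nontrivial direction. The only cosmetic difference is that the paper argues the converse by contraposition (if $a \not\le b$, produce a separating compact $k$), whereas you argue it directly via $a = \bigvee(\down a \cap K(L)) \le b$.
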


\proof
Suppose that $a \le b$. Let $U\in\A(Y_L)$. Since $U$ is an upset of $Y_L$, if $a \in 
U$, then $b \in U$.
Conversely, suppose that $a \not\le b$. Since $\Kp(L)$ is join-dense in $L$, there is 
$k \in \Kp(L)$ with $k \le a$ and $k \not\le b$. Let $U = \up k \cap Y_L$. Then $U$ is admissible by 
Lemma~\ref{lem: admissible}, $a \in U$, and $b \notin U$.
\endproof

\begin{lemma} \label{lem: directed}
For $a \in Y_L$ we have that $\I_a$ is nonempty and directed iff $a \in \p(L)$.
\end{lemma}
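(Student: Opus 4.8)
The plan is to prove both directions of the equivalence $\I_a$ nonempty and directed $\iff a\in P(L)$, relying on Lemma~\ref{lem: admissible} to identify admissible clopen upsets of $Y_L$ with sets of the form $\up k\cap Y_L$ for $k\in K(L)$, so that $\I_a=\{\up k\cap Y_L\mid k\in K(L),\ k\not\le a\}$. Under this identification, membership $\up k\cap Y_L\subseteq \up l\cap Y_L$ translates (via Lemma~\ref{lem: order}, or directly) into $l\le k$; and $\I_a$ being directed means: for any $k_1,k_2\in K(L)$ with $k_1\not\le a$ and $k_2\not\le a$, there is $k\in K(L)$ with $k\not\le a$ and $k\le k_1$, $k\le k_2$, i.e. $k\le k_1\wedge k_2$.

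First I would handle the direction $a\in P(L)\Rightarrow \I_a$ nonempty and directed. Nonemptiness: since $a\neq 1$ and $K(L)$ is join-dense, there is a compact $k\le 1$ with $k\not\le a$, giving $\up k\cap Y_L\in\I_a$ (this also uses that $a\in Y_L$, and by the remark after Definition of $\PP(L)$, primes are pseudoprime so $P(L)\subseteq Y_L$). Directedness: given $k_1,k_2\in K(L)$ with $k_1\not\le a$, $k_2\not\le a$, primeness of $a$ gives $k_1\wedge k_2\not\le a$; since $L$ is algebraic, pick a compact $k\le k_1\wedge k_2$ with $k\not\le a$. Then $\up k\cap Y_L\in\I_a$ and it contains both $\up k_1\cap Y_L$ and $\up k_2\cap Y_L$ (as $k\le k_1\wedge k_2\le k_i$ implies $\up k_i\subseteq\up k$). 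Hence $\I_a$ is directed.

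For the converse $\I_a$ nonempty and directed $\Rightarrow a\in P(L)$, I would first note $a\neq 1$: if $a=1$ then $\up k\cap Y_L\subseteq\{1\}$ would force... actually cleaner: if $a=1$, then no compact $k$ satisfies $k\not\le a$, so $\I_a=\varnothing$, contradicting nonemptiness. Now suppose for contradiction $a$ is not prime, so there are $b,c\in L$ with $b\wedge c\le a$ but $b\not\le a$ and $c\not\le a$. Since $L$ is algebraic, choose compact $k_1\le b$ with $k_1\not\le a$ and compact $k_2\le c$ with $k_2\not\le a$. Then $\up k_1\cap Y_L,\ \up k_2\cap Y_L\in\I_a$, so by directedness there is $k\in K(L)$ with $k\not\le a$ and $\up k_i\cap Y_L\subseteq\up k\cap Y_L$ for $i=1,2$. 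The final step—turning these two inclusions into $k\le k_1$ and $k\le k_2$, hence $k\le k_1\wedge k_2\le b\wedge c\le a$, contradicting $k\not\le a$—is the main obstacle: the inclusion $\up k_i\cap Y_L\subseteq\up k\cap Y_L$ only gives $k\le k_i$ directly if $k_i\in Y_L$, which need not hold. So I would instead argue that $\up k_i\cap Y_L\subseteq\up k\cap Y_L$ forces $k\le k_i$ by using Lemma~\ref{lem: existence of primes}(1): if $k\not\le k_i$, there is $p\in P(L)\subseteq Y_L$ with $k\not\le p$ and $k_i\le p$, so $p\in\up k_i\cap Y_L$ but $p\notin\up k\cap Y_L$, a contradiction. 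This gives $k\le k_1\wedge k_2$, and since $b\wedge c\le a$ we get $k\le a$, the desired contradiction. Therefore $a\in P(L)$.

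I expect the only real subtlety to be this last point—that inclusions of admissible clopen upsets of $Y_L$ faithfully reflect the order on the generating compact elements—and it is resolved precisely by the separation property of primes in Lemma~\ref{lem: existence of primes}(1) together with $P(L)\subseteq Y_L$. Everything else is a routine unwinding of the definition of $\I_a$ via Lemmas~\ref{lem: admissible} and~\ref{lem: order} and the algebraicity of $L$.
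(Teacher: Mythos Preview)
Your proof is correct and follows essentially the same approach as the paper's. In fact, you are more careful than the paper on one point: where the paper simply asserts that $U,V\subseteq W$ implies $t\le k,l$, you explicitly justify this via Lemma~\ref{lem: existence of primes}(1), which is indeed the right way to see that the correspondence $k\mapsto \up k\cap Y_L$ reflects order.
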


\proof
Let $a \in \p(L)$. Then $a \ne 1$, so there is $k \in \Kp(L)$ with $k \not\le a$. By 
Lemma~\ref{lem: admissible}, $\up k \cap Y_L$ is admissible and does not contain $a$. 
Therefore, $\I_a$ is nonempty. To see it is directed, let $U, V \in \I_a$. 
Then there are $k, l \in \Kp(L)$ with $U = \up k \cap Y_L$ and $V = \up l \cap Y_L$. 
Therefore, $k, l \not\le a$, so $k \wedge l \not \le a$ since $a \in \p(L)$. 
Thus, there is $t \in \Kp(L)$ with 
$t \le k \wedge l$ and $t \not\le a$. Then $\up t \cap Y_L \in \I_a$ and contains both 
$U, V$. This proves that $\I_a$ is directed.

Conversely, suppose that $\I_a$ is nonempty and directed. Since $\I_a$ is nonempty, 
there is $k \in \Kp(L)$ with $\up k \cap Y_L \in \I_a$ by Lemma~\ref{lem: admissible}. 
Thus, $k \not\le a$, and so $a \ne 1$.
If $a \notin \p(L)$, then there are $x, y \in L$ with $x \wedge y \le a$ and 
$x, y \not\le a$. The latter implies that there are $k, l \in \Kp(L)$ with $k \le x$, $l \le 
y$, and $k, l \not\le a$. Set $U = \up k \cap Y_L$ and $V = \up l \cap Y_L$. Then $U, V 
\in \I_a$. Therefore, there is $W \in \I_a$ with $U, V \subseteq W$. By 
Lemma~\ref{lem: admissible}, there is $t \in \Kp(L)$ with $W = \up t\cap Y_L$. But then 
$t \le k, l$, so $t \le k \wedge l \le a$, and hence $W \notin \I_a$. 
The obtained contradiction proves that $a \in \p(L)$.
\endproof

\begin{proposition} \label{prop: X is GPries}
If $L$ is an algebraic frame, then $\Y(L) := \langle Y_L, \lambda,\le, \p(L), 1\rangle$ 
is a pointed generalized Priestley space.
\end{proposition}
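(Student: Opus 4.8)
The plan is simply to verify, one by one, the six conditions in the definition of a pointed generalized Priestley space for the tuple $\langle Y_L,\tau,\le,P(L),1\rangle$, with all of the substantive work already packaged in the preceding lemmas. Condition~(1) is precisely Lemma~\ref{lem: X is Priestley}. Condition~(2) is immediate: $1$ is the top element of $L$, hence of the subposet $Y_L$, so it is the (necessarily unique) maximum, and since every pseudoprime element is by definition different from $1$ we have $Y_L\setminus\{1\}=\PP(L)$. Condition~(3) is Lemma~\ref{lem: dense}, noting that $P(L)\subseteq\PP(L)$ because primes are pseudoprime (if $a_1\wedge\cdots\wedge a_n\ll p$ then $a_1\wedge\cdots\wedge a_n\le p$, so primeness of $p$ forces some $a_i\le p$). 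Condition~(5) is Lemma~\ref{lem: directed} and Condition~(6) is Lemma~\ref{lem: order}.

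The only item needing a separate (short) argument is Condition~(4), order-density of $P(L)$ in $Y_L\setminus\{1\}$: given $a\in\PP(L)$ one must produce $p\in P(L)$ with $a\le p$. Since $a\ne 1$ and $K(L)$ is join-dense in $L$, I would pick $k\in K(L)$ with $k\not\le a$; then Lemma~\ref{lem: existence of primes}(1), applied to the pair $k\not\le a$, yields $p\in P(L)$ with $k\not\le p$ and $a\le p$, which is exactly what is required.

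I expect no real obstacle here, since the genuine content — that $Y_L$ is a closed subspace of the Priestley space $L$, the identification of the admissible clopen upsets of $Y_L$ with the sets $\up k\cap Y_L$ for $k\in K(L)$, the directedness characterization of primes, and the density of $P(L)$ — has already been carried out in the lemmas. The one place to stay alert is Condition~(4), where the witness must land in $P(L)$ rather than merely in $\PP(L)$; this is why the argument routes through the existence of prime elements in Lemma~\ref{lem: existence of primes}(1) rather than any weaker separation statement.
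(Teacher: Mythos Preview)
Your proposal is correct and follows essentially the same route as the paper: each of the six conditions is dispatched by the corresponding preparatory lemma, with the paper citing Lemma~\ref{lem: existence of primes}(1) directly for order-density. Your argument for Condition~(4) is fine but takes a small detour through a compact $k$; one can apply Lemma~\ref{lem: existence of primes}(1) directly to $1\not\le a$ to obtain $p\in P(L)$ with $a\le p$.
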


\proof
By Lemma~\ref{lem: X is Priestley}, $\langle Y_L,\lambda,\le \rangle$ is a Priestley space. It 
is clear that $1$ is the unique maximum of $Y_L$. By Lemma~\ref{lem: dense}, $\p(L)$ is 
dense in $Y_L \setminus \{1\}$. By 
Lemma~\ref{lem: existence of primes and clopens}\ref{primes}, $\p(L)$ is 
order-dense in $Y_L \setminus \{1\}$. By Lemma~\ref{lem: directed}, $a \in \p(L)$ iff 
$\I_a$ is nonempty and directed. Finally, by Lemma~\ref{lem: order}, $a \le b$ iff $
\forall U \in \A(Y_L)\,(a \in U \Rightarrow b \in U)$. Thus, $\Y(L)$ is a pointed 
generalized Priestley space.
\endproof

We next turn to morphisms.

\begin{definition}\label{def:Ralpha}
Let $L_1, L_2$ be algebraic frames and $\alpha \colon L_1 \to L_2$ an 
$\AlgFrmJ$-morph\-ism. 
If $r$ is the right adjoint of $\alpha$, we define 
$R_\alpha \subseteq Y_{L_2} \times Y_{L_1}$ by 
$p \rel{R_\alpha} q$ if $r(p) \le q$ in~$L_1$. 
\end{definition}

\begin{remark}
It is not necessarily the case that if $p \in Y_{L_2}$, then $r(p) \in Y_{L_1}$. In 
Lemma~\ref{lem: sup = strong WS} we will show exactly when 
$r[Y_{L_2}] \subseteq Y_{L_1}$. 
\end{remark}

\begin{lemma} \label{lem: R is a GPries morphism}
Let $\alpha \colon L_1 \to L_2$ be an $\AlgFrmJ$-morphism. Then 
$R_\alpha \subseteq Y_{L_2} \times Y_{L_1}$ is a generalized Priestley morphism.
\end{lemma}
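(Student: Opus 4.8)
We must verify the two defining conditions of a generalized Priestley morphism for the relation $R_\alpha \subseteq Y_M \times Y_L$, where $r$ is the right adjoint of the join-preserving map $\alpha$. The plan is to translate everything into the lattice language via the adjunction $\alpha \dashv r$, exploit that $r$ preserves arbitrary meets (being a right adjoint), and use Lemma~\ref{lem: admissible}, which identifies the admissible clopen upsets of $Y_L$ and $Y_M$ as exactly the sets $\up k \cap Y_L$ and $\up k \cap Y_M$ for $k$ compact. A preliminary step I would record is a clean description of $\Box_{R_\alpha}$ on admissible clopens: for $k \in K(M)$ and $p \in Y_M$, we have $R_\alpha[p] = \up r(p) \cap Y_L$, so $p \in \Box_{R_\alpha}(\up k' \cap Y_L)$ for $k' \in K(L)$ iff $\up r(p) \cap Y_L \subseteq \up k' \cap Y_L$ iff $k' \le r(p)$ (here I would use that $K(L)$ is join-dense and Lemma~\ref{lem: order}, or directly argue via primes using Lemma~\ref{lem: existence of primes}(1)) iff $\alpha(k') \le p$ by the adjunction. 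Thus $\Box_{R_\alpha}(\up k' \cap Y_L) = \up \alpha(k') \cap Y_M$, which is admissible by Lemma~\ref{lem: admissible} since $\alpha$ preserves compactness. This handles condition~(2) of Definition~\ref{def: morphism} for the basic admissible clopens; for a general $U \in \A(Y_L)$, Lemma~\ref{lem: admissible} gives $U = \up k' \cap Y_L$ with $k'$ compact, so nothing more is needed.

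For condition~(1), suppose $p \nr{R_\alpha} q$, i.e.\ $r(p) \not\le q$ in $L$. Since $K(L)$ is join-dense, pick $k' \in K(L)$ with $k' \le r(p)$ and $k' \not\le q$. Set $U = \up k' \cap Y_L \in \A(Y_L)$. Then $q \notin U$, and I claim $R_\alpha[p] \subseteq U$: if $p R_\alpha q'$ then $r(p) \le q'$, hence $k' \le q'$, so $q' \in U$. This is exactly condition~(1). So both conditions reduce, essentially immediately, to the adjunction inequality together with join-density of compacts and Lemma~\ref{lem: admissible}.

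I expect the main obstacle to be purely bookkeeping rather than conceptual: one must be careful that the order on $Y_L$ inherited from $L$ is the right one (it is $\le$, not the reversed order $\supseteq$ used on $\V$, so no flipping occurs here), and that when invoking Lemma~\ref{lem: admissible} the element one gets is genuinely compact so that $\Box_{R_\alpha}U$ lands back among the admissible clopens — this is where $\alpha \in \AlgFrmJ$ (preservation of compact elements) is used essentially. A secondary point to get right is the equivalence ``$\up r(p) \cap Y_L \subseteq \up k' \cap Y_L \iff k' \le r(p)$'': the nontrivial direction needs that $Y_L$ contains enough primes to separate $k'$ from $r(p)$ when $k' \not\le r(p)$, which is precisely Lemma~\ref{lem: existence of primes}(1) together with $P(L) \subseteq \PP(L) \subseteq Y_L$. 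Once these two observations are in place, the verification is the short argument sketched above.
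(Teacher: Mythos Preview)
Your proposal is correct and follows essentially the same route as the paper. The paper also verifies condition~(1) by picking a compact $k \le r(p)$ with $k \not\le q$ and taking $U = \up k \cap Y_L$, and for condition~(2) it proves the same key identity $\Box_{R_\alpha}(\up k \cap Y_L) = \up \alpha(k) \cap Y_M$ (stated as a separate Claim) via the adjunction and Lemma~\ref{lem: existence of primes}(1), then applies Lemma~\ref{lem: admissible}; your anticipated ``secondary point'' about needing primes to separate $k'$ from $r(p)$ is exactly where the paper invokes Lemma~\ref{lem: existence of primes}(1).
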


\proof
Let $p \in Y_{L_2}$ and $q \in Y_{L_1}$ with $p \nr{R_\alpha} q$. Then 
$r(p) \not\le q$, so there is $k \in \Kp(L_1)$ with $k \le r(p)$ and $k \not\le q$. 
If $U = \up k \cap Y_{L_1}$, then $U$ is admissible by Lemma~\ref{lem: admissible} and $q 
\notin U$. Let $q' \in R_\alpha[p]$. Then $r(p) \le q'$. Therefore, $k \le q'$, so 
$q' \in U$. Thus, $R_\alpha[p] \subseteq U$. This shows that $R_\alpha$ satisfies 
Definition~\ref{def: GPS morphism}\ref{GPS morphism 1}. 
To see that $R_\alpha$ also satisfies 
Definition~\ref{def: GPS morphism}\ref{GPS morphism 2}, suppose that $U 
\in \A(Y_{L_1})$. Then $U = \up k \cap Y_{L_1}$ for some $k \in \Kp(L_1)$ by 
Lemma~\ref{lem: admissible}. 

\begin{claim} \label{claim: calc}
$\Box_{R_\alpha}U = \up \alpha(k) \cap Y_{L_2}$.
\end{claim}

\proof 
Suppose that $p \in \up \alpha(k) \cap Y_{L_2}$. Then $\alpha(k) \le p$, so 
$k \le r(p)$. Let $q \in Y_{L_1}$ with $p\rel{R_\alpha} q$. Then $r(p) \le q$, so 
$k \le q$, and hence $q \in U$. 
Therefore, $R_\alpha[p] \subseteq U$, yielding that $p \in \Box_{R_\alpha} U$. This 
proves that $\up \alpha(k) \cap Y_{L_2} \subseteq \Box_{R_\alpha} U$. If $p \notin \up 
\alpha(k) \cap Y_{L_2}$, then $\alpha(k) \not\le p$, so $k \not\le r(p)$. By 
Lemma~\ref{lem: existence of primes and clopens}\ref{primes}, 
there is $q \in \p(L_1)$ with $k \not\le q$ and $r(p) \le q$. Therefore, 
$p\rel{R_\alpha} q$ and 
$q \notin U$. Thus, $p \notin \Box_{R_\alpha}U$, and so $\Box_{R_\alpha}U = \up 
\alpha(k) \cap Y_{L_2}$.
\endproof

The claim together with Lemma~\ref{lem: admissible} shows that $\Box_{R_\alpha}U \in 
\A(Y_{L_2})$. Thus, $R_\alpha$ is a generalized Priestley morphism.
\endproof

\begin{proposition} \label{prop: functor from DAlgLat to GPries}
There is a contravariant functor $\Y \colon \AlgFrmJ \to \PGPries$ which sends $L \in 
\AlgFrmJ$ to $\Y(L)$ and an $\AlgFrmJ$-morphism $\alpha$ to $R_\alpha$.
\end{proposition}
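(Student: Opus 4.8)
The plan is to verify that $L \mapsto \Y(L)$ and $\alpha \mapsto R_\alpha$ preserve identities and (contravariantly) composition; well-definedness on objects and on morphisms is already provided by Proposition~\ref{prop: X is GPries} and Lemma~\ref{lem: R is a GPries morphism}, respectively.

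First I would treat identities. If $\alpha$ is the identity on $L$, then its right adjoint $r$ is also the identity, so by Definition~\ref{def:Ralpha} we have $p\,R_\alpha\,q$ iff $r(p) \le q$ iff $p \le q$; hence $R_\alpha$ is the relation $\le$ on $Y_L$, which is by definition the identity morphism on $\Y(L)$ in $\PGPries$.

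Next I would handle composition. Let $\alpha : L \to M$ and $\beta : M \to N$ be $\AlgFrmJ$-morphisms with right adjoints $r : M \to L$ and $r' : N \to M$. Since right adjoints compose in the opposite order, $r \circ r'$ is the right adjoint of $\beta \circ \alpha$, so for $p \in Y_N$ and $q \in Y_L$ we have $p\,R_{\beta\circ\alpha}\,q$ iff $r(r'(p)) \le q$. By contravariance, it remains to show $R_{\beta\circ\alpha} = R_\alpha * R_\beta$, that is, $r(r'(p)) \le q$ iff, for every $U \in \A(Y_L)$, $p \in \Box_{R_\beta}\Box_{R_\alpha}U$ implies $q \in U$. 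By Lemma~\ref{lem: admissible} it suffices to take $U = \up k \cap Y_L$ with $k \in K(L)$. Since $\AlgFrmJ$-morphisms preserve compact elements, $\alpha(k) \in K(M)$, so Claim~\ref{claim: calc} applied first to $\alpha$ and then to $\beta$ gives $\Box_{R_\alpha}U = \up \alpha(k) \cap Y_M$ and then $\Box_{R_\beta}\Box_{R_\alpha}U = \up \beta(\alpha(k)) \cap Y_N$. Thus $p \in \Box_{R_\beta}\Box_{R_\alpha}U$ iff $\beta(\alpha(k)) \le p$, which by the two adjunctions is equivalent to $k \le r(r'(p))$. Hence the right-hand condition becomes ``$k \le r(r'(p))$ implies $k \le q$ for all $k \in K(L)$'', and since $K(L)$ is join-dense in $L$ this is equivalent to $r(r'(p)) \le q$. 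So $R_{\beta\circ\alpha} = R_\alpha * R_\beta$, and $\Y$ is a contravariant functor.

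I do not expect a genuine obstacle: the argument is essentially an unwinding of the definitions. The two points requiring care are that the right adjoint of a composite reverses the order of composition (so one gets $r \circ r'$, not $r' \circ r$), and that iterating Claim~\ref{claim: calc} is justified only because $\AlgFrmJ$-morphisms carry compact elements to compact elements, so that $\alpha(k)$ is again compact and $\Box_{R_\alpha}U$ is again of the form $\up(\cdot) \cap Y_M$ with a compact first argument. Beyond that, all that is used is Lemma~\ref{lem: admissible} (to reduce to generating admissible clopen upsets) and join-density of the compact elements.
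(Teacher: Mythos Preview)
Your proposal is correct and follows essentially the same approach as the paper: well-definedness from Proposition~\ref{prop: X is GPries} and Lemma~\ref{lem: R is a GPries morphism}, the identity check via $r = \mathrm{id}$, and the composition check via Claim~\ref{claim: calc} applied twice. The only minor difference is that you handle both directions of $R_{\beta\alpha} = R_\alpha * R_\beta$ at once through the iff chain and join-density of $K(L)$, whereas the paper treats the converse separately by a direct relational argument (picking $U$ with $T[x] \subseteq U$ and chasing $xR_\beta y$, $yR_\alpha t$); your version is slightly more streamlined but not substantively different.
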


\proof
Proposition~\ref{prop: X is GPries} and Lemma~\ref{lem: R is a GPries morphism} show 
that $\Y$ is well defined. 
If $\alpha$ is the identity on $L$, then $p \rel{R_\alpha} q$ iff $r(p) \le q$ iff $p \le q$, 
so $R_\alpha$ is equal to $\le$. Therefore, $\Y$ sends identity morphisms to identity 
morphisms. To show that $\Y$ preserves composition, let $\alpha \colon L_1 \to L_2$ and $
\beta \colon L_2 \to L_3$ be $\AlgFrmJ$-morphisms. Write 
$R = R_\beta$, $S = R_\alpha$, 
and $T = R_{\beta\alpha}$. We show that $T = S * R$.

First suppose that $x\rel{T}z$, so $r_{\beta\alpha}(x) \le z$, and hence 
$r_\alpha r_\beta(x) 
\le z$ because $r_{\beta\alpha} = r_{\alpha} r_\beta$. Let $U$ be an 
admissible clopen upset of $Y_{L_3}$. By Lemma~\ref{lem: admissible}, 
$U = \up k \cap Y_{L_3}$ for some $k \in \Kp(L_3)$. 
By Claim~\ref{claim: calc}, 
$\Box_R \Box_S U = \Box_R (\up \alpha(k) \cap Y_{L_2}) = \up \beta\alpha(k) 
\cap Y_{L_1}$. Therefore, 
\[
x \in \Box_R \Box_S U \Longrightarrow \beta\alpha(k) \le x \Longrightarrow k \le 
r_\alpha r_\beta(x) \le z,
\] 
so $z \in U$. Thus, $x\rel{(S*R)}z$.
 
Conversely, if $x \nr{T} z$, then there is $U \in \A(Y_{L_3})$ with $T[x] \subseteq 
U$ and $z \notin U$. We show that $x \in \Box_R \Box_S U$. Let $x\rel{R}y$ and 
$y\rel{S}t$. Then $r_\beta (x) \le y$ and $r_\alpha(y) \le t$, so 
$r_\alpha r_\beta(x) \le t$. Therefore, 
$r_{\beta \alpha}(x) \le t$, so $x\rel{T}t$, and hence $t \in T[x]$. This yields 
$t \in U$. Thus, $x \in \Box_R \Box_S U$ as desired. Since $z \notin U$, 
it is false that $x 
\rel{(S*R)}z$. This shows that $R_{\beta\alpha} = R_\alpha * R_\beta$. Consequently,
 $\Y$ is a contravariant functor.
\endproof

\subsection*{\textbf{Dual equivalence of $\AlgFrmJ$ and $\PGPries$}.} 
\label{subsec: dual equivalence}

We now show that the functors $\V$ and $\Y$ yield a dual equivalence
of $\AlgFrmJ$ and $\PGPries$. To do so, we 
produce natural isomorphisms $\Upsilon \colon 1_{\PGPries} \to \Y\circ \V$ and 
$\eta \colon 1_{\AlgFrmJ} \to \V \circ \Y$.
  
Let $X \in \PGPries$. Define $\varepsilon_X \colon X \to Y_{\V(X)}$ by 
$\varepsilon_X(x) = \up x$ for each $x \in X$.

\begin{lemma} \label{lem: varepsilon is an iso}
For $X \in \PGPries$, we have$:$ 
\begin{enumerate}[$(1)$]
\item \label{varepsilon 1} $\varepsilon_X$ is well defined. 
\item \label{varepsilon 2} $\varepsilon_X$ is an order-isomorphism.
\item \label{varepsilon 3} $\varepsilon_X$ is a homeomorphism.
\item \label{varepsilon 4} $\varepsilon_X[X_0] = (Y_{\V(X)})_0$. 
\end{enumerate}
\end{lemma}

\proof
\ref{varepsilon 1}.  
Let $x\in X$.  For each $y \notin \up x$ we have $x \not\le y$, so there is $U \in \A(X)
$ with $x \in U$ and $y \notin U$ by Definition~\ref{def: PGPries}\ref{PGPS separation}. 
Thus, $\up x = \bigcap\{ U \in \A(X) \mid x \in U \}$, 
and hence $\up x \in \V(X)$.

We next show that $\up x \in Y_{\V(X)}$. If $\up x = X$, then this is trivial. 
Otherwise, we show
that $\up x$ is pseudoprime.
Suppose that $C_i \in \V(X)$ with $C_1 \wedge \cdots \wedge C_n \ll \up x$. Since $
\up x = \bigcap \{ U \in \A(X) \mid x \in U\}$, recalling that $\V(X)$ is ordered by $
\supseteq$, there is $U \in \A(X)$ with $C_1 \wedge \cdots \wedge C_n \le U \le \up 
x$. Therefore, $x \in U \subseteq C_1 \wedge \cdots \wedge C_n$.  We claim that $x 
\in C_1 \cup \cdots \cup C_n$. If not, then there are $V_i \in \A(X)$ with $C_i 
\subseteq V_i$ and $x \notin V_1 \cup \cdots \cup V_n$. Thus, $U \subseteq C_1 
\wedge \cdots \wedge C_n \subseteq V_1 \wedge \cdots \wedge V_n$.  
We show this implies that $U \subseteq V := V_1 \cup \cdots \cup V_n$. 

We first show $U \cap X_0 \subseteq V$. Let $y \in U \cap X_0$. If $y \notin V$, then 
$V_i \in \I_y$ for each $i$. Since $y \in X_0$, there is $W \in \I_y$ 
with $V \subseteq W$. 
Therefore, $V_1, \ldots, V_n \subseteq W$, and so $W \le V_1, \ldots, V_n$, which 
implies $W \le V_1 \wedge \cdots \wedge V_n$. Thus, $V_1 \wedge \cdots \wedge V_n 
\subseteq W$, and hence $U \subseteq W$. This is a contradiction since $y \in U$. 
Consequently, $U \cap X_0 \subseteq V$.

Now, if $U \not\subseteq V$, then $U \cap V^c$ is a nonempty open subset of $X$, and 
$U \cap V^c \subseteq X \setminus \{m\}$ since $m \in V$. Because $X_0$ is dense in 
$X\setminus \{m\}$, we have $X_0 \cap U \cap V^c \ne \varnothing$. This contradicts 
the inclusion $U \cap X_0 \subseteq V$. Therefore, $U \subseteq V = V_1 \cup \cdots 
\cup V_n$. But this is false since $x \in U$. Thus, $x \in C_1 \cup \cdots \cup C_n$, 
proving the claim. This implies that $C_i \le \up x$ for some $i$, so $\up x \in Y_{\V(X)}
$, and hence $\varepsilon_X$ is well defined.

\ref{varepsilon 2}. It is clear that $\varepsilon_X$ is order preserving and 
order reflecting. Thus, 
$\varepsilon_X$ is 1-1. To see that it is onto, let $C \in Y_{\V(X)}$. If $C$ is the top of 
$Y_{\V(X)}$, then $C = \up m$, and hence $C = \varepsilon_X(m)$. Suppose that $C$ is a 
pseudoprime. We show that $\min C$ is a singleton. Otherwise
for each distinct pair $x,y \in \min C$ there is $U_{xy} \in \A(X)$ with $x \in U_{xy}$ 
and $y \notin U_{xy}$. Therefore, $C \not\subseteq U_{xy}$. The various $U_{xy}$ 
cover $C$. By Lemma~\ref{lem: intersection of admissible clopens}, $C$ is an intersection 
from $\A(X)$, so by compactness, 
there are $V \in \A(X)$ and distinct pairs $x_1, y_1, \dots, x_n, y_n \in \min C$ with 
$C \subseteq V \subseteq U_1 \cup \cdots \cup U_n$, where we write $U_i$ for 
$U_{x_iy_i}$. 
In $\V(X)$ this says that $U_1 \wedge \cdots \wedge U_n \le V \le C$. 
By Lemma~\ref{lem: LX is a DAlgLat}, $V \in \Kp(\V(X))$, implying that 
$U_1 \wedge \cdots \wedge U_n \ll C$. Because 
$C$ is a pseudoprime, this forces $U_i \le C$ for some $i$, so $C \subseteq U_i$. This 
is false by construction of the $U_i$. Therefore, $\min C = \{x\}$, and so $C = \up x$ 
for some $x \in X$. Thus, $\varepsilon_X$ is onto, hence an order-isomorphism. 

\ref{varepsilon 3}. Since $X$ and $Y_{\V(X)}$ are compact Hausdorff,
by \ref{varepsilon 2} it is sufficient to show 
that $\varepsilon_X$ is continuous. For this, since the topology on $Y_{\V(X)}$ is 
generated by clopen upsets and their complements, by 
Lemmas~\ref{lem: existence of primes and clopens}\ref{clopens} 
and~\ref{lem: admissible} it suffices to show that if $V$ is an admissible clopen upset of 
$Y_{\V(X)}$, then $\varepsilon_X^{-1}(V)$ is clopen. By Lemmas~\ref{lem: LX is a 
DAlgLat} and~\ref{lem: admissible}, there is $U \in \A(X)$ with $V = \up U \cap 
Y_{\V(X)} = \{ W \in Y_{\V(X)} \mid U \le W \}$. We have
\begin{align*}
\varepsilon_X(x)^{-1}(V) &= \varepsilon_X(x)^{-1}(\up U \cap Y_{\V(X)}) = \{ x\in X \mid 
\up x \in \up U \cap Y_{\V(X)} \} \\
&= \{ x\in X \mid U \le \up x\} = \{ x\in X \mid \up x \subseteq U\} = U.
\end{align*}
Thus, $\varepsilon_X$ is continuous, hence a homeomorphism.

\ref{varepsilon 4}. First let $x \in X_0$. Then $x \ne m$, so $\up x$ is 
not the top of $\V(X)$. 
Suppose that $C, D \in \V(X)$ with $C \wedge D \le \up x$, so $x \in C \wedge D$. If 
$x \notin C \cup D$, then there are $U, V \in \A(X)$ with $C \subseteq U$, $D 
\subseteq V$, and $x \notin U \cup V$. Therefore, $U, V \in \I_x$, so from $x \in X_0$ 
it follows that there is $W \in \I_x$ with $U \cup V \subseteq W$. But $C \wedge D$ is 
the intersection of all $U' \in \A(X)$ with $C \cup D \subseteq U'$. This contradicts $x 
\in C \wedge D$. Thus, $x \in C$ or $x \in D$, so $C \le \up x$ or $D \le \up x$. 
Consequently, $\up x \in \p(\V(X)) =(Y_{\V(X)})_0$.

Conversely, let $C \in (Y_{\V(X)})_0$. Since $\varepsilon_X$ is onto, $C = \up x$ 
for some $x 
\in X$. If $x \notin X_0$, then either $\I_x = \varnothing$ or $\I_x$ is not directed. 
If $\I_x = \varnothing$, then $x = m$, so $\up x$ is the top of $\V(X)$, and hence not 
in $(Y_{\V(X)})_0$, a contradiction. If $\I_x$ is not directed, then there are 
$U, V \in \I_x$ 
but no larger admissible clopen upset misses $x$. Since the meet $U \wedge V$ in $\V(X)
$ is the intersection of all admissible clopen upsets containing $U \cup V$, all such contain 
$x$, and so $x \in U \wedge V$. Therefore, $U \wedge V \le \up x$. This contradicts $
\up x \in (Y_{\V(X)})_0$. Thus, $x \in X_0$. 
\endproof

\begin{definition}
Let $X \in \PGPries$. Define $\Upsilon_X \subseteq X \times Y_{\V(X)}$ by 
$x \rel{\Upsilon_X} C$ iff $
\varepsilon_X(x) \le C$ in $\V(X)$.
\end{definition}

To prove that $\Upsilon \colon 1_{\PGPries} \to \Y\circ\V$ is a natural isomorphism, 
we require the following two lemmas. 

\begin{lemma} \label{lem: R coming from f}
Let $X, Y \in \PGPries$ and suppose that $f \colon X \to Y$ is an order-isomorphism and 
homeomorphism with $f[X_0] = Y_0$. Define $R_f \subseteq X \times Y$ by 
$x\rel{R_f} y$ if $f(x) \le y$.
\begin{enumerate}[$(1)$]
\item \label{Rf 1} $R_f$ is a generalized Priestley morphism.
\item \label{Rf 2} If $g \colon Y \to Z$ is another map satisfying the same 
hypotheses as $f$, then 
$R_{g\circ f} = R_g * R_f$.
\item \label{Rf 3} $R_f$ is a $\PGPries$-isomorphism.
\end{enumerate}
\end{lemma}

\proof
\ref{Rf 1} and \ref{Rf 2} follow from the same proof as \cite[Lem.~9.3]{BJ08} 
since $f$ is a strong Priestley morphism 
(see Definition~\ref{def: functional and strong morphisms}\ref{strong}).
To see \ref{Rf 3}, for the identity morphism $1_X \colon X \to X$ we have 
$x\rel{R_{1_X}} y$ iff $x \le y$, 
so $R_{1_X}$ is equal to $\le$, an identity morphism in $\PGPries$. Since $f$ is 
an order-isomorphism and
homeomorphism, $R_{f^{-1}}$ is a generalized Priestley morphism by \ref{Rf 1}, 
and it is the inverse
of $R_f$ by \ref{Rf 2}. Therefore, $R_f$ is a $\PGPries$-isomorphism.
\endproof

\begin{lemma} \label{lem: ugly calculations}
Let $X \in \PGPries$. For $U \in \A(X)$ set $V = \up U \cap Y_{\V(X)}$. Then $V \in 
\A(Y_{\V(X)})$ and $\Box_{\Upsilon_X} V = U$.
\end{lemma}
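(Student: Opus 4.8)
The plan is to verify the two assertions in turn, relying on Lemma~\ref{lem: admissible} to control admissibility in $Y_{\V(X)}$ and on the computations already carried out in the proof of Lemma~\ref{lem: varepsilon is an iso}.

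\textbf{Step 1: $V\in\A(Y_{\V(X)})$.} By Lemma~\ref{lem: LX is a DAlgLat}, $\V(X)$ is an algebraic frame with $K(\V(X))=\A(X)$, so $U\in\A(X)$ is a compact element of $\V(X)$. Hence $\Y(\V(X))$ is defined, and by Lemma~\ref{lem: admissible} (applied to the algebraic frame $L=\V(X)$) a clopen upset of $Y_{\V(X)}$ is admissible precisely when it has the form $\up k\cap Y_{\V(X)}$ for some $k\in K(\V(X))$. Taking $k=U$ gives $V=\up U\cap Y_{\V(X)}\in\A(Y_{\V(X)})$ immediately.

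\textbf{Step 2: $\Box_{\R_X}V=U$.} I will unwind the definitions. By definition $x\,\R_X\,C$ iff $\varepsilon_X(x)\le C$ in $\V(X)$, i.e.\ iff $C\subseteq\up x$. Thus $\R_X[x]=\{C\in Y_{\V(X)}\mid C\subseteq \up x\}$, which is exactly the principal downset of $\up x$ intersected with $Y_{\V(X)}$. Now
\[
x\in\Box_{\R_X}V \iff \R_X[x]\subseteq V \iff \bigl(\forall C\in Y_{\V(X)}\bigr)\bigl(C\subseteq\up x \Rightarrow U\le C\bigr).
\]
Since $\up x\in Y_{\V(X)}$ by Lemma~\ref{lem: varepsilon is an iso}(1), taking $C=\up x$ shows the right-hand side forces $U\le\up x$, i.e.\ $\up x\subseteq U$, i.e.\ $x\in U$. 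Conversely, if $x\in U$ then $U\subseteq\up x$ (as $U$ is an upset), so $U\le C$ for every $C\le\up x$ in $\V(X)$, and the right-hand side holds. Therefore $\Box_{\R_X}V=\{x\in X\mid x\in U\}=U$.

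\textbf{Main obstacle.} The computation itself is routine once one is careful about the order reversal: $\V(X)$ is ordered by $\supseteq$, so ``$\le$ in $\V(X)$'' means ``$\supseteq$'' and the roles of joins/meets are swapped relative to set inclusion. The only genuinely substantive input is that $\up x$ is itself an element of $Y_{\V(X)}$ (so that it may be used as a test element $C$ above), which is precisely the content of part (1) of Lemma~\ref{lem: varepsilon is an iso}; everything else is bookkeeping with the definitions of $\R_X$, $\Box_R$, and admissibility. I expect no real difficulty beyond keeping the variance straight.
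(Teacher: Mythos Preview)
Your argument is essentially the same as the paper's: invoke Lemmas~\ref{lem: LX is a DAlgLat} and~\ref{lem: admissible} for $V\in\A(Y_{\V(X)})$, then unwind $\Box_{\R_X}$ using that $\up x\in Y_{\V(X)}$ (Lemma~\ref{lem: varepsilon is an iso}(1)) as the key test element. One slip to fix in the converse of Step~2: from $x\in U$ and $U$ an upset you get $\up x\subseteq U$, not $U\subseteq\up x$; with that correction the chain $C\subseteq\up x\subseteq U$ gives $C\subseteq U$ (i.e.\ $U\le C$) for every $C$ with $C\subseteq\up x$, and the right-hand side follows as intended.
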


\proof
That $V \in \A(Y_{\V(X)})$ follows from Lemmas~\ref{lem: LX is a DAlgLat} 
and~\ref{lem: admissible}. Moreover,
\begin{align*}
\Box_{\Upsilon_X} V &= \{ x \in X \mid \Upsilon_X[x] \subseteq V\} = 
\{ x \in X \mid x\rel{\Upsilon_X} C 
\Rightarrow C \in V\} \\
&= \{ x \in X \mid x \rel{\Upsilon_X} C \Rightarrow C \subseteq U\} = 
\{ x \in X \mid C \subseteq \up x \Rightarrow C \subseteq U\} \\
&= \{ x \in X \mid \up x \subseteq U \} = U,
\end{align*}
where the first equality on the last line holds since $\up x$ is admissible by 
Lemma~\ref{lem: varepsilon is an iso}\ref{varepsilon 1}. 
\endproof

\begin{proposition} \label{prop: R is natural}
Let $X \in \PGPries$. Then $\Upsilon_X$ is a generalized Priestley morphism and 
$\Upsilon \colon 1_{\PGPries} \to \Y \circ \V$ is a natural isomorphism.
\end{proposition}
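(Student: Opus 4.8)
The plan is to recognize $\R_X$ as the relation $R_{\varepsilon_X}$ furnished by Lemma~\ref{lem: R coming from f} and then to reduce naturality to the $\Box$-computations already in hand. By its very definition, $\R_X$ is exactly the relation $R_f \subseteq X \times Y_{\V(X)}$ of Lemma~\ref{lem: R coming from f} for $f = \varepsilon_X$. By Lemma~\ref{lem: varepsilon is an iso}, $\varepsilon_X$ is an order-isomorphism, a homeomorphism, and satisfies $\varepsilon_X[X_0] = (Y_{\V(X)})_0$, so the hypotheses of Lemma~\ref{lem: R coming from f} are met. Part~(1) of that lemma then gives that $\R_X$ is a generalized Priestley morphism, and part~(3) gives that $\R_X$ is a $\PGPries$-isomorphism. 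Hence every component of $\R$ is an isomorphism, and it only remains to establish naturality.

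So let $R \subseteq X \times Y$ be a $\PGPries$-morphism. Then $\V(R) = \Box_R : \V(Y)\to\V(X)$ is an $\AlgFrmJ$-morphism and $(\Y\circ\V)(R) = R_{\Box_R} \subseteq Y_{\V(X)} \times Y_{\V(Y)}$, and naturality is the assertion $\R_Y * R = R_{\Box_R} * \R_X$ of $\PGPries$-morphisms $X \to Y_{\V(Y)}$. Since composition in $\PGPries$ is defined purely in terms of $\Box$-images of admissible clopen upsets, it suffices to show that the maps $V \mapsto \Box_R(\Box_{\R_Y}V)$ and $V \mapsto \Box_{\R_X}(\Box_{R_{\Box_R}}V)$ agree on $\A(Y_{\V(Y)})$. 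By Lemmas~\ref{lem: LX is a DAlgLat} and~\ref{lem: admissible}, each $V \in \A(Y_{\V(Y)})$ has the form $V = \up U \cap Y_{\V(Y)}$ with $U \in \A(Y) = K(\V(Y))$. For such $V$: Lemma~\ref{lem: ugly calculations} gives $\Box_{\R_Y}V = U$, so $\Box_R(\Box_{\R_Y}V) = \Box_R U$; and Claim~\ref{claim: calc}, applied to the $\AlgFrmJ$-morphism $\Box_R : \V(Y)\to\V(X)$ and the compact element $U$ of $\V(Y)$, gives $\Box_{R_{\Box_R}}V = \up(\Box_R U)\cap Y_{\V(X)}$; since $\Box_R U \in \A(X)$ by Definition~\ref{def: morphism}(2), a second application of Lemma~\ref{lem: ugly calculations} gives $\Box_{\R_X}\big(\up(\Box_R U)\cap Y_{\V(X)}\big) = \Box_R U$. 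Thus both maps send $V$ to $\Box_R U$, so the two composites coincide and $\R$ is natural.

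I do not expect a genuine obstacle: the substantive work is in Lemma~\ref{lem: varepsilon is an iso} (that $\varepsilon_X$ is an order-homeomorphism matching the distinguished dense subsets) and in the box-computations of Lemma~\ref{lem: ugly calculations} and Claim~\ref{claim: calc}, all already proved. The one point demanding care is the reduction of naturality to the ``basic'' admissible sets $\up U \cap Y_{\V(Y)}$: this rests on the identification $K(\V(Y)) = \A(Y)$ (Lemma~\ref{lem: LX is a DAlgLat}) and the description of admissible clopen upsets of $Y_{\V(Y)}$ in Lemma~\ref{lem: admissible}. One must also keep consistent track of the fact that the frames $\V(-)$ are ordered by $\supseteq$, so that $\up U \cap Y_{\V(X)}$ denotes $\{ C \in Y_{\V(X)} \mid C \subseteq U\}$, matching the conventions used in Lemma~\ref{lem: ugly calculations} and Claim~\ref{claim: calc}.
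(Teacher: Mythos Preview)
Your proof is correct and follows essentially the same route as the paper: both invoke Lemma~\ref{lem: varepsilon is an iso} and Lemma~\ref{lem: R coming from f} for the isomorphism part, and both reduce naturality to the identities $\Box_{\R_Y}V = U$ (Lemma~\ref{lem: ugly calculations}) and $\Box_{R_{\Box_R}}V = \up(\Box_R U)\cap Y_{\V(X)}$ (Claim~\ref{claim: calc}) for $V = \up U \cap Y_{\V(Y)}$. The only cosmetic difference is that you first isolate the reduction ``it suffices that $\Box_R\Box_{\R_Y}$ and $\Box_{\R_X}\Box_{R_{\Box_R}}$ agree on $\A(Y_{\V(Y)})$'' and then show both equal $\Box_R U$, whereas the paper expands each side of $x(\R_Y * R)C$ and $x(R_{\Box_R} * \R_X)C$ separately to the common condition $(\forall U)(x \in \Box_R U \Rightarrow C \subseteq U)$.
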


\proof
Let $X \in \PGPries$. 
By Lemma~\ref{lem: varepsilon is an iso}, $\varepsilon_X \colon X \to Y_{\V(X)}$ is an 
order-isomorphism and homeomorphism such that $\varepsilon_X[X_0] = (Y_{\V(X)})_0$. 
Therefore, $\Upsilon_X$ is a $\PGPries$-isomorphism by 
Lemma~\ref{lem: R coming from f}.

It is left to show that $\Upsilon$ is natural. Let $R \subseteq X \times Y$ be a 
generalized Priestley morphism. We must show that 
$\Upsilon_{Y} * R = \Y\V(R) * \Upsilon_{X}$. Since $\V(R) = 
\Box_R$ and $\Y\V(R) = R_{\Box_R}$, we must show that $\Upsilon_{Y} * R = 
R_{\Box_R} * \Upsilon_{X}$.
\[
\begin{tikzcd}
X \arrow[r, "R"] \arrow[d, "\Upsilon_{X}"'] & Y \arrow[d, "\Upsilon_{Y}"]\\
Y_{\V(X)} \arrow[r, "R_{\Box_R}"'] & Y_{\V(Y)}
\end{tikzcd}
\]

Let $x \in X$ and $C \in Y_{\V(Y)}$. By Lemma~\ref{lem: ugly calculations}, quantifying 
$V = \up U \cap Y_{\V(Y)}$, we have
\begin{align*}
x\rel{(\Upsilon_{Y} * R)} C &\Leftrightarrow (\forall V) (x \in \Box_R\Box_{\Upsilon_{Y}}V 
\Rightarrow C \in V) \\
&\Leftrightarrow (\forall U)  (x \in \Box_R U \Rightarrow C \subseteq U).
\end{align*}

On the other hand, $\Box_R$ is an $\AlgFrmJ$-morphism by 
Lemma~\ref{lem: Box R is a morphism}.
By Claim~\ref{claim: calc}, $\Box_{R_{\Box_R}} V = (\up \Box_R U) \cap Y_{\V(X)}$.
Therefore, applying Lemma~\ref{lem: ugly calculations}, we obtain
\begin{align*}
x \rel{(R_{\Box_R} * \Upsilon_{X})} C &\Leftrightarrow  (\forall V) 
(x \in \Box_{\Upsilon_{X}} 
\Box_{R_{\Box_R}} V \Rightarrow C \in V) \\
&\Leftrightarrow (\forall U) (x \in \Box_{\Upsilon_{X}} ((\up \Box_R U) \cap Y_{\V(X)}) 
\Rightarrow C \subseteq U) \\
&\Leftrightarrow (\forall U) (x \in \Box_R U \Rightarrow C \subseteq U).
\end{align*}
This shows that $\Upsilon_{Y} * R = R_{\Box_R} * \Upsilon_{X}$, 
and hence $\Upsilon$ is a natural isomorphism. 
\endproof

We now turn to the natural isomorphism $\eta \colon 1_{\AlgFrmJ} \to \V \circ \Y$. 
Let $L$ 
be an algebraic frame. Define $\eta_L \colon L \to \V(Y_L)$ by $\eta_L(a) 
= \up a \cap Y_L$.
To prove that $\eta_L$ is well defined, 
we need the following generalization of Lemma~\ref{lem: admissible}.

\begin{lemma} \label{lem: closed admissibles}
 Let $L$ be an algebraic frame and $C$ a closed upset of $Y_L$. Then $C$ is admissible 
 iff $C = \up a \cap Y_L$ for some $a \in L$.
\end{lemma}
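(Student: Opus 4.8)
The plan is to mimic the two directions of Lemma~\ref{lem: admissible}, replacing the single compact element $k$ by an arbitrary element $a\in L$ and using that every element of an algebraic lattice is a directed join of compact elements.

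For the forward implication, suppose $C=\up a\cap Y_L$. Writing $a=\bigvee\{k\in K(L)\mid k\le a\}$, I would observe that $\up a=\bigcap\{\up k\mid k\in K(L),\,k\le a\}$, so $C=\bigcap\{\up k\cap Y_L\mid k\le a\}$. Since each $\up k\cap Y_L$ is an admissible clopen upset by Lemma~\ref{lem: admissible}, $C$ is an intersection of admissible clopen upsets, hence an admissible closed upset by Lemma~\ref{lem: intersection of admissible clopens}. (Closedness also follows since each $\up k$ is clopen and $Y_L$ carries the subspace topology of the Priestley space $L$; that $\up a$ is an upset is immediate.)

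For the converse, let $C$ be an admissible closed upset of $Y_L$. By Lemma~\ref{lem: intersection of admissible clopens}, $C=\bigcap\mathcal{U}$ for a family $\mathcal{U}\subseteq\A(Y_L)$ of admissible clopen upsets, and by Lemma~\ref{lem: admissible} each $U\in\mathcal{U}$ has the form $\up k_U\cap Y_L$ with $k_U\in K(L)$. Set $a=\bigwedge\{k_U\mid U\in\mathcal{U}\}$, the meet taken in $L$. I claim $C=\up a\cap Y_L$. The inclusion $\up a\cap Y_L\subseteq C$ needs an argument: if $x\in Y_L$ with $a\le x$, I want $k_U\le x$ for each $U$, i.e.\ $x\in U$. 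This is where the hypothesis $x\in Y_L$ — that $x$ is pseudoprime or $1$ — is used, together with Lemma~\ref{lem: existence of primes}(1); alternatively one argues: if $k_U\not\le x$ for some $U$ then by density of $P(L)$ (Lemma~\ref{lem: dense}) or directly, pick a prime $p\ge x$ with some compact $t\le a$, $t\not\le p$, contradicting $a\le x\le p$ via compactness of $t$. For the reverse inclusion $C\subseteq\up a\cap Y_L$: if $x\in C$ then $x\in U$ for all $U$, so $k_U\le x$ for all $U$, hence $a=\bigwedge k_U\le x$; thus $x\in\up a\cap Y_L$.

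The main obstacle I anticipate is the inclusion $\up a\cap Y_L\subseteq C$, because $a$ is only a meet of the compact elements $k_U$, and from $a\le x$ one cannot directly conclude $k_U\le x$. The resolution must exploit that $x$ is pseudoprime (or $1$): if $k_U\not\le x$ for some $U$, then since $a\le k_U$ one would need a compact $t$ with $t\le a\le k_U$, $t\le x$, and simultaneously derive a contradiction — more cleanly, use Lemma~\ref{lem: existence of primes}(1) to find $p\in P(L)$ with $k_U\not\le p$ and $x\le p$; then $a\le x\le p$ forces (by pulling a compact $t\le a$ with $t\not\le p$... ) the needed contradiction, but in fact the slickest route is: $\up a\cap Y_L=\bigcap_U(\up k_U\cap Y_L)=\bigcap\mathcal U=C$ directly, once we know $\up a\cap Y_L=\bigcap_U(\up k_U\cap Y_L)$, and this last equality on $Y_L$ is exactly the statement that for $x\in Y_L$, $a\le x\iff k_U\le x$ for all $U$, whose nontrivial direction is handled by the pseudoprimeness of $x$ as above. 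I would present the converse via this identification of intersections, keeping the pseudoprime argument as the one genuinely delicate point.
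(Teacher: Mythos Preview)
Your forward direction is correct and is a clean alternative to the paper's argument (the paper instead shows admissibility directly: for $q\notin C$ it uses Lemma~\ref{lem: existence of primes}(1) to produce $p\in P(L)$ with $q\le p$ and $a\not\le p$).

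The converse, however, contains a genuine error: you should set $a=\bigvee\{k_U\mid U\in\mathcal U\}$, not the meet. With the join both inclusions are immediate and no pseudoprimeness is needed: if $x\in C$ then $k_U\le x$ for all $U$, so $a=\bigvee_U k_U\le x$; and if $a\le x$ then each $k_U\le a\le x$, so $x\in\bigcap\mathcal U=C$. This is exactly what the paper does.

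With your choice $a=\bigwedge_U k_U$ the inclusion $\up a\cap Y_L\subseteq C$ is simply false, and pseudoprimeness cannot rescue it. Take $L=\{0,u,v,1\}$ the four-element Boolean algebra (an algebraic frame with $K(L)=L$); then $P(L)=PP(L)=\{u,v\}$ and $Y_L=\{u,v,1\}$. For $C=\{u,1\}=\up u\cap Y_L$, the admissible clopens containing $C$ are $\up u\cap Y_L$ and $\up 0\cap Y_L=Y_L$, so your $a=u\wedge 0=0$ and $\up 0\cap Y_L=Y_L\ne C$. Here $v$ is prime (hence pseudoprime), $0\le v$, yet $u\not\le v$. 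Your attempted contradictions do not go through: from $a\le x\le p$ you get $t\le p$ for every compact $t\le a$, never $t\not\le p$; and pseudoprimeness concerns only \emph{finite} meets that are \emph{way below} $x$, yielding that \emph{some} factor lies below $x$, not all. The identity you are implicitly after is $\bigcap_U\up k_U=\up\!\big(\bigvee_U k_U\big)$, which points to the join.
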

 
\proof
Suppose that $C = \up a \cap Y_L$ for some $a \in L$. 
If $q \in Y_L \setminus C$, then $a \not\le q$. By 
Lemma~\ref{lem: existence of primes and clopens}\ref{primes}, 
there is $p \in \p(L)$ with $a \not\le p$ and $q \le p$. Thus, $C$ is admissible.
Conversely, suppose that $C$ is admissible. By Lemmas~\ref{lem: intersection of 
admissible clopens} and \ref{lem: admissible}, 
\[
C = \bigcap \{ \up k \cap Y_L \mid k \in \Kp(L), C \subseteq \up k \cap Y_L\}. 
\]
Set $S = \{ k \in \Kp(L) \mid C \subseteq \up k \cap Y_L\}$ and $a = \bigvee S$. 
If $p \in C$, then $p \in \up k \cap Y_L$ and so $k \le p$ for each $k \in S$. 
Therefore, $a \le 
p$, which gives $p \in \up a \cap Y_L$. For the reverse inclusion, let 
$p \in \up a \cap Y_L$. Then $a \le p$, so $k \le p$ for each $k \in S$. This yields 
$p \in \bigcap \{ \up k \cap Y_L \mid k \in S \} = C$. Thus, $C = \up a \cap Y_L$.
\endproof
  
\begin{proposition} \label{prop: eta is natural}
Let $L$ be an algebraic frame. Then $\eta_L$ is an $\AlgFrmJ$-morphism and 
$\eta \colon 1_{\AlgFrmJ} \to \V \circ \Y$ is a natural isomorphism.
\end{proposition}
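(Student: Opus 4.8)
The plan is to establish three things in turn: that $\eta_L$ is well defined (lands in $\V(Y_L)$), that it is an $\AlgFrmJ$-morphism, that each $\eta_L$ is a bijection, and finally that $\eta$ is natural. For well-definedness, note that $\up a \cap Y_L$ is manifestly a closed upset of $Y_L$; Lemma~\ref{lem: closed admissibles} immediately tells us it is admissible, so $\eta_L(a)\in\V(Y_L)$. For the morphism properties, recall that $\V(Y_L)$ is ordered by $\supseteq$ and $K(\V(Y_L))=\A(Y_L)$ by Lemma~\ref{lem: LX is a DAlgLat}. Preservation of arbitrary joins: in $L$, $\bigvee S$ corresponds on the space side to $\up(\bigvee S)\cap Y_L = \bigcap_{a\in S}(\up a\cap Y_L)$ because $b\ge \bigvee S \iff b\ge a$ for all $a\in S$; and intersection is exactly the join in $\V(Y_L)$. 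Preservation of compact elements: the compact elements of $L$ are $K(L)$, and $\eta_L(k)=\up k\cap Y_L$ is admissible clopen by Lemma~\ref{lem: admissible}, hence compact in $\V(Y_L)$; conversely every admissible clopen upset is of this form, again by Lemma~\ref{lem: admissible}. So $\eta_L$ preserves joins and compact elements, i.e.\ is an $\AlgFrmJ$-morphism.

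Next I would show $\eta_L$ is an order-isomorphism, hence an isomorphism in $\AlgFrmJ$ (a join- and compact-preserving bijection whose inverse automatically preserves joins, being monotone between complete lattices and a bijection). Order-embedding: if $a\le b$ in $L$ then $\up b\subseteq \up a$, so $\eta_L(b)\le \eta_L(a)$ in $\V(Y_L)$ (recalling the $\supseteq$ order), giving $\eta_L(a)\le\eta_L(b)$; and if $a\not\le b$, then by Lemma~\ref{lem: existence of primes}(1) there is $p\in P(L)$ with $a\not\le p$ and $b\le p$, so $p\in\eta_L(b)\setminus\eta_L(a)$, hence $\eta_L(a)\not\le\eta_L(b)$ in $\V(Y_L)$. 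Surjectivity: given $C\in\V(Y_L)$, which is a closed admissible upset of $Y_L$, Lemma~\ref{lem: closed admissibles} furnishes $a\in L$ with $C=\up a\cap Y_L=\eta_L(a)$. (Injectivity is already contained in the order-embedding claim.) This proves each component $\eta_L$ is an $\AlgFrmJ$-isomorphism.

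Finally, for naturality, let $\alpha:L\to M$ be an $\AlgFrmJ$-morphism with right adjoint $r$, so that $\Y(\alpha)=R_\alpha$ and $\V(R_\alpha)=\Box_{R_\alpha}$. The square to verify is $\eta_M\circ\alpha = \Box_{R_\alpha}\circ\eta_L$, i.e.\ for every $a\in L$, $\up\alpha(a)\cap Y_M = \Box_{R_\alpha}(\up a\cap Y_L)$. Since every element of $L$ is a join of compact elements and both sides preserve arbitrary joins ($\alpha$ and $\eta_M$ by the above, $\Box_{R_\alpha}$ by Lemma~\ref{lem: Box R is a morphism}, $\eta_L$ by the above), it suffices to check this on $a=k\in K(L)$. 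But that is precisely the content of Claim~\ref{claim: calc} in the proof of Lemma~\ref{lem: R is a GPries morphism}: $\Box_{R_\alpha}(\up k\cap Y_L)=\up\alpha(k)\cap Y_M=\eta_M(\alpha(k))$. Hence the square commutes, and since each $\eta_L$ is an isomorphism, $\eta:1_{\AlgFrmJ}\to\V\circ\Y$ is a natural isomorphism.

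I expect the only genuinely delicate point to be making sure the join-reduction to compact elements is legitimate on both sides of the naturality square — that is, confirming $\Box_{R_\alpha}$ genuinely commutes with the (infinite) joins used to write $\up\alpha(a)\cap Y_M$ as an intersection over compact pieces, which is exactly Lemma~\ref{lem: Box R is a morphism}, and that $\eta_M\circ\alpha$ does too, which follows since $\alpha$ preserves joins and $\eta_M$ was just shown to. Everything else is a direct appeal to Lemmas~\ref{lem: admissible}, \ref{lem: existence of primes}, \ref{lem: closed admissibles}, and Claim~\ref{claim: calc}.
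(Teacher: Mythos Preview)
Your proof is correct and follows the same overall structure as the paper's: well-definedness via Lemma~\ref{lem: closed admissibles}, join preservation via $\up(\bigvee S)=\bigcap_s\up s$, compactness via Lemmas~\ref{lem: LX is a DAlgLat} and~\ref{lem: admissible}, and surjectivity again via Lemma~\ref{lem: closed admissibles}. The one substantive difference is in the naturality step. The paper verifies $\Box_{R_\alpha}\eta_{L_1}(a)=\up\alpha(a)\cap Y_{L_2}$ directly for arbitrary $a$: it unpacks $\Box_{R_\alpha}(\up a\cap Y_{L_1})$ as $\{x\in Y_{L_2}\mid(\forall q\in Y_{L_1})(r(x)\le q\Rightarrow a\le q)\}$ and then uses meet-density of primes (Lemma~\ref{lem: existence of primes}(1)) to rewrite this as $\{x\mid a\le r(x)\}=\{x\mid \alpha(a)\le x\}$. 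You instead reduce to compact $a$ by observing that both sides of the square preserve arbitrary joins, and then invoke Claim~\ref{claim: calc}. Your route is tidier in that it recycles a computation already done; the paper's is more self-contained at that spot. One small slip to fix: in your order-preserving check you write ``$\eta_L(b)\le\eta_L(a)$ in $\V(Y_L)$ \dots\ giving $\eta_L(a)\le\eta_L(b)$'', which is self-contradictory; the correct statement is simply that $\up b\subseteq\up a$ means $\eta_L(a)\le\eta_L(b)$ in the $\supseteq$-order, and your conclusion is right.
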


\proof
That $\eta_L$ is well defined follows from Lemma~\ref{lem: closed admissibles}. To see 
that $\eta_L$ is an $\AlgFrmJ$-morphism, let $S \subseteq L$ and $a = \bigvee S$. 
Then $\up a = \bigcap \{ \up s \mid s \in S\}$, so 
\[
\eta_L(a) = \up a \cap Y_L = \bigcap \{ \up s \cap Y_L \mid s \in S\} = \bigvee \{ \up s 
\cap Y_L \mid s \in S\} = \bigvee \{ \eta_L(s) \mid s \in S \}. 
\]
Therefore, $\eta_L$ preserves arbitrary joins. To see it preserves compact elements, let 
$k \in \Kp(L)$. Then $\eta_L(k) = \up k \cap Y_L$, which is compact by Lemmas~\ref{lem: 
LX is a DAlgLat} and~\ref{lem: admissible}. Therefore, $\eta_L$ is an 
$\AlgFrmJ$-morphism. It is clearly 1-1, and is onto by 
Lemma~\ref{lem: closed admissibles}. Thus, $\eta_L$ is an 
isomorphism.

To show naturality, let $\alpha \colon L_1 \to L_2$ be an $\AlgFrmJ$-morphism with 
right adjoint $r$. Then $R_\alpha \subseteq Y_{L_2} \times Y_{L_1}$ is given by 
$p \rel{R_\alpha} q$ if $r(p) \le q$.
\[
\begin{tikzcd}
L_1 \arrow[r, "\alpha"] \arrow[d, "\eta_{L_1}"'] & L_2 \arrow[d, "\eta_{L_2}"] \\
\V(Y_{L_1}) \arrow[r, "\Box_{R_\alpha}"'] & \V(Y_{L_2})
\end{tikzcd}
\]
Let $a \in L_1$. Then $\eta_{L_2}(\alpha(a)) = \up \alpha(a) \cap Y_{L_2}$. Also, 
\begin{align*}
\Box_{R_\alpha} \eta_{L_1}(a) &= \Box_{R_\alpha}(\up a \cap Y_{L_1}) = 
\{ x \in Y_{L_2} \mid R_\alpha[x] \subseteq \up a \cap Y_{L_1} \} \\
&= \{ x \in Y_{L_2} \mid (\forall q \in Y_{L_1})(r(x) \le q \Rightarrow a \le q)\}.
\end{align*}
By Lemma~\ref{lem: existence of primes and clopens}\ref{primes}, 
$r(x) = \bigwedge (\up r(x) \cap \p(L))$. Therefore, 
\[
\Box_{R_\alpha} \eta_{L_1}(a) = \{ x \in Y_{L_2} \mid a \le r(x) \} = \{ x \in Y_{L_2} 
\mid \alpha(a) \le x \} = \up \alpha(a) \cap Y_{L_2}.
\]
Thus, $\Box_{R_\alpha} \eta_{L_1}(a) = \up \alpha(a) \cap Y_{L_2}$. This proves 
naturality, and hence $\eta$ is a natural isomorphism.
\endproof

Propositions~\ref{prop: functor from GPries to DAlgLat}, \ref{prop: functor from DAlgLat 
to GPries}, \ref{prop: R is natural}, and \ref{prop: eta is natural} yield our main result.

\begin{theorem} \label{thm: duality between DAlgLat and GPries*}
The contravariant functors $\V$ and $\Y$ establish a dual equivalence between $
\AlgFrmJ$ and $\PGPries$.
\end{theorem}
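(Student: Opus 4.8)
The plan is simply to assemble the four results established earlier in this section. We already have the contravariant functor $\V:\PGPries\to\AlgFrmJ$ from Proposition~\ref{prop: functor from GPries to DAlgLat} and the contravariant functor $\Y:\AlgFrmJ\to\PGPries$ from Proposition~\ref{prop: functor from DAlgLat to GPries}. By the standard characterization of a dual equivalence, it remains only to produce natural isomorphisms between the identity functors and the two composites $\Y\circ\V$ and $\V\circ\Y$. This is precisely what Propositions~\ref{prop: R is natural} and~\ref{prop: eta is natural} provide: $\R:1_{\PGPries}\to\Y\circ\V$ and $\eta:1_{\AlgFrmJ}\to\V\circ\Y$ are natural isomorphisms. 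Hence $\V$ and $\Y$ form a dual equivalence between $\AlgFrmJ$ and $\PGPries$; combining this with Corollary~\ref{cor: DMSLat = DAlgLat} then recovers Theorem~\ref{thm: pointed Pries}.

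If one wants to be pedantic, the only thing worth unwinding is why these two natural isomorphisms suffice. A contravariant functor $\PGPries\to\AlgFrmJ$ is the same datum as a covariant functor $\PGPries^{\mathrm{op}}\to\AlgFrmJ$; under this identification the composites $\Y\circ\V$ and $\V\circ\Y$ are unchanged, so the existence of isomorphisms $\Y\circ\V\cong 1$ and $\V\circ\Y\cong 1$ exhibits an ordinary equivalence of categories $\PGPries^{\mathrm{op}}\simeq\AlgFrmJ$, which is exactly what ``dual equivalence'' means. In particular no triangle identities need be verified: an equivalence of categories requires only the existence of the two natural isomorphisms, not that they assemble into an adjoint equivalence.

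I expect no genuine obstacle to remain at this stage — all of the substantive work lies in Propositions~\ref{prop: functor from GPries to DAlgLat}--\ref{prop: eta is natural}, namely constructing $\V$, $\Y$, $\R$, $\eta$ and checking functoriality and naturality. The one point one should glance at is that $\R$ and $\eta$ point in compatible directions (both out of the identity functor), so that no arrow needs to be inverted and no further coherence has to be checked; this is immediate from their definitions.
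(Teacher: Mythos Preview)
Your proposal is correct and matches the paper's own approach exactly: the paper simply states that Propositions~\ref{prop: functor from GPries to DAlgLat}, \ref{prop: functor from DAlgLat to GPries}, \ref{prop: R is natural}, and \ref{prop: eta is natural} together yield the theorem, with no further argument. Your added explanation of why the two natural isomorphisms suffice for a dual equivalence is a harmless elaboration of what the paper leaves implicit.
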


Putting Theorem~\ref{thm: duality between DAlgLat and GPries*} and Corollary~\ref{cor: 
DMSLat = DAlgLat} together yields Theorem~\ref{thm: pointed Pries}. But we can say 
more:

\begin{theorem} \label{thm: eq and dual eq}
The functors establishing the duality of Theorem~\emph{\ref{thm: pointed Pries}} are 
the compositions of the functors of Theorem~\emph{\ref{thm: duality between DAlgLat 
and GPries*}} and Corollary~\emph{\ref{cor: DMSLat = DAlgLat}}. 
\end{theorem}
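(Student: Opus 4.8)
The plan is to verify that, under the concrete descriptions of all the functors involved, $\A=\K\circ\V$ on the nose and $\X\cong\Y\circ\F$, where $\F\colon\DMSLat\to\AlgFrmJ$ and $\K\colon\AlgFrmJ\to\DMSLat$ are the functors of Corollary~\ref{cor: DMSLat = DAlgLat}.

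First I would check $\A=\K\circ\V$, which is a matter of unwinding definitions. For $X\in\PGPries$, Lemma~\ref{lem: LX is a DAlgLat} gives $K(\V(X))=\A(X)$; since $\V(X)$ is ordered by $\supseteq$, the functor $\K$, which equips the poset of compact elements with the dual order, returns the meet-semilattice $(\A(X),\subseteq)$, that is, $\A(X)$ itself (with meet the intersection and top $X$). For a $\PGPries$-morphism $R\subseteq X\times Y$, the functor $\V$ sends it to $\Box_R\colon\V(Y)\to\V(X)$, and $\K$ restricts this map to the compact elements, yielding $\Box_R|_{\A(Y)}\colon\A(Y)\to\A(X)$, which is exactly $\A(R)$. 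Hence $\A=\K\circ\V$.

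Given this, I would deduce $\X\cong\Y\circ\F$ by a soft argument rather than a second direct computation. The pair $(\K\circ\V,\,\Y\circ\F)$ is itself a dual equivalence between $\PGPries$ and $\DMSLat$: indeed $(\K\circ\V)\circ(\Y\circ\F)\cong\K\circ\F\cong 1_{\DMSLat}$, cancelling $\V\circ\Y$ via the natural isomorphism $\eta$ of Proposition~\ref{prop: eta is natural} and then applying Corollary~\ref{cor: DMSLat = DAlgLat}, while $(\Y\circ\F)\circ(\K\circ\V)\cong\Y\circ\V\cong 1_{\PGPries}$, cancelling $\F\circ\K$ via Corollary~\ref{cor: DMSLat = DAlgLat} and then applying the natural isomorphism $\R$ of Proposition~\ref{prop: R is natural}. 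Since $(\A,\X)$ is also a dual equivalence and $\A=\K\circ\V$, and since a dual equivalence determines its other half up to natural isomorphism, we get $\X\cong\Y\circ\F$. Restricting the whole argument along the inclusions $\BDMSLat\hookrightarrow\DMSLat$ and $\KAlgFrmJ\hookrightarrow\AlgFrmJ$ (Corollary~\ref{cor: BMSLat = KDAlgLat}) then yields the corresponding statement for the duality of Theorem~\ref{thm: gen Pries}.

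It is also instructive, and probably worth recording, to make the identification $\X=\Y\circ\F$ directly at the object level. Writing $L=\Filt(M)$, its compact elements are the principal filters ${\uparrow}a$ with $a\in M$, and since $L$ is algebraic, $G\ll H$ in $L$ iff $G\subseteq{\uparrow}a\subseteq H$ for some $a\in M$. From this one reads off that a proper filter $p$ is a meet-prime element of the lattice $\Filt(M)$ exactly when $F_1\cap F_2\subseteq p$ implies $F_1\subseteq p$ or $F_2\subseteq p$, which is the definition of a prime filter, so $P(L)={\sf Pr}(M)$; and, translating through the universal property of the distributive envelope $D(M)$ recalled in Section~\ref{sec: Priestley duality}, that the pseudoprime elements of $\Filt(M)$ are precisely the optimal filters of $M$, so $\PP(L)=\Opt(M)$. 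One then checks that $1_L=M$, that the subbasic opens ${\uparrow}({\uparrow}a)\cap Y_L=\{F\in Y_L\mid a\in F\}$ coincide with $\varphi(a)$, and, on morphisms, that $\F(\alpha)=\ell$ with $\ell$ the left adjoint of $\alpha^{-1}$, so $\alpha^{-1}$ is the right adjoint of $\ell$ and hence $\Y(\F(\alpha))=R_\ell$ is given by $pR_\ell q\iff\alpha^{-1}(p)\subseteq q$ (Definition~\ref{def:Ralpha}), which is precisely $\X(\alpha)=R_\alpha$. On this route the one genuinely nontrivial step is the identification $\PP(\Filt(M))=\Opt(M)$, since it is the only place where the combinatorics of $D(M)$ enters; the soft argument above sidesteps it entirely.
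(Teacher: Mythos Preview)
Your treatment of $\A=\K\circ\V$ is exactly the paper's argument. For the other half, the paper proceeds differently: it proves the \emph{equality} $\X=\Y\circ\F$ directly. On objects, with $L=\F(M)$, it invokes the already-established identification of pseudoprime elements of $\Filt(M)$ with optimal filters of $M$ (citing \cite[Rem.~3.2]{BJ13} and \cite[Prop.~4.8]{BJ11}) to get $Y_L=\PP(L)\cup\{M\}=\Opt(M)\cup\{M\}=\X(M)$; on morphisms it notes, as you do, that $\alpha^{-1}$ is the right adjoint of $\F(\alpha)$, so $\Y\F(\alpha)=R_\alpha=\X(\alpha)$. Your primary route---deducing $\X\cong\Y\circ\F$ from $\A=\K\circ\V$ via uniqueness of quasi-inverses---is perfectly valid and pleasantly conceptual, but it yields only a natural isomorphism, whereas the theorem, as stated, is asserting that the functors literally coincide (the diagram following the proof is meant on the nose). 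Your secondary, direct sketch is essentially the paper's argument and would close that gap; you have correctly singled out $\PP(\Filt(M))=\Opt(M)$ as the one substantive ingredient, which the paper handles by citation rather than reproving it. In short: the soft argument is a nice addition, but to match the paper's conclusion you should promote your direct verification (including the identification of subbasic opens, which the paper leaves implicit) to the main line and treat the categorical uniqueness as a remark.
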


\proof
Let $M \in \DMSLat$ and $L = \F(M)$. By \cite[Rem.~3.2]{BJ13} and \cite[Prop.~4.8]
{BJ11}, pseudoprime elements of $L$ are precisely the optimal filters of $M$. Therefore, 
\[
\Y\F(M) = Y_L = \PP(L) \cup \{M\} = \Opt(M) \cup \{M\} = \X(M).
\] 
If $\alpha\colon M_1 \to M_2$ is a $\DMSLat$-morphism, then as we saw in 
Section~\ref{sec: pontryagin}, $\F(\alpha)$ is the left adjoint of $\alpha^{-1}$, 
and hence $\alpha^{-1}$ is 
the right adjoint of $\F(\alpha)$. Therefore, it follows from the definitions of $
\X(\alpha)$ and $\Y \F(\alpha)$ that they coincide.
In the opposite direction, if $X \in \PGPries$, then $\K\V(X) = \A(X)$ by 
Lemma~\ref{lem: LX is a DAlgLat}. 
If $R \subseteq X \times Y$ is a generalized Priestley morphism, then $\V(R)=\Box_R$ 
and $\K \V(R)$ is the restriction of $\Box_R$ to $\K\V(Y) = \A(Y)$, which is exactly $
\A(R)$. 
This shows that $\X = \Y \circ \F$ and $\A = \K \circ \V$.
\[
\begin{tikzcd}
\DMSLat \arrow[rr, shift left = .6ex, "\F"] \arrow[dr,  shift left=.5ex, "\X"] && 
\AlgFrmJ \arrow[ll, shift left = .6ex, "\K"] \arrow[dl, shift left = .6ex, "\Y"] \\
& \PGPries \arrow[ul, shift left = .6ex, "\A"] \arrow[ur, shift left = .6ex, "\V"] &
\end{tikzcd}
\]
\endproof

We conclude this section by discussing what happens when we restrict our attention to 
bounded distributive meet-semilattices and compact algebraic frames. 
Let $X \in \PGPries$. As we pointed out in Remark~\ref{rem: m iso}, $\A(X)$ is bounded 
iff $m$ is an isolated point of $X$. By Corollary~\ref{cor: BMSLat = KDAlgLat}\ref{BMSLat = KDAlgLat}, this is 
equivalent to $\V(X)$ being compact. By Remark~\ref{rem: m iso}, the full subcategory of 
$\PGPries$ consisting of those $X\in\PGPries$ in which $m$ is isolated is equivalent to $
\GPries$. Thus, the diagram above restricts to the categories $\BDMSLat$, $\KAlgFrmJ$, and $\GPries$, 
which further restricts to $\BDMSLatB$, $\KAlgFrmJB$, and $\GPriesT$
by Corollary \ref{cor: BMSLat = KDAlgLat}\ref{BMSLatB = KDAlgLatJB} and Theorem \ref{thm: gen Pries}. 

The restrictions of the functors $\F,\K$ do not require any modification. 
The functors $\A,\X$ are modified as in 
Theorem~\ref{thm: gen Pries}. The functor $\V$ is the same, but this time defined on $
\GPries$. Because of this, $\varnothing\in\V(X)$ for each $X\in\GPries$. In fact, the 
maps $C \mapsto C \cap X$ and $D \mapsto D \cup \{m\}$ are inverse isomorphisms 
between $\V(X)$ and $\V(X^+)$, under which $\varnothing$ corresponds to $\{m\}$ 
(see Remark~\ref{rem: m iso}). 

Finally, we need to slightly modify $\Y$. Indeed, if $L$ is a compact algebraic frame, 
then $1$ is an isolated point of $Y_L$. Therefore, $\PP(L)\in\GPries$. Also, if $\alpha$ is 
an $\KAlgFrmJ$-morphism, then $R_\alpha^-$ is a $\GPries$-morphism (see 
Remark~\ref{rem: m iso}). Thus, we can modify $\Y$ by sending $L$ to $\PP(L)$ and $
\alpha$ to $R_\alpha^-$. Using the same letter $\Y$ for this modified functor, we arrive 
at the following: 

\begin{corollary} \label{cor: bounded}
The functors of Theorem~\emph{\ref{thm: eq and dual eq}} restrict to yield an equivalence and dual equivalence of: 
\begin{enumerate}[$(1)$]
\item \label{J case} $\KAlgFrmJ$, $\BDMSLat$, and $\GPries$.
\item \label{JB case} $\KAlgFrmJB$, $\BDMSLatB$, and $\GPriesT$.
\end{enumerate}
\end{corollary}
 
Putting Theorem~\ref{thm: duality between DAlgLat and GPries*} and Corollaries~\ref{cor: DMSLat = DAlgLat} 
and \ref{cor: bounded}\ref{J case} together yields the top layer of Figure~\ref{fig: first diagram}.

\section{Various morphisms between algebraic frames} \label{sec: various morphisms}

So far we worked with the maps between algebraic frames that preserve 
arbitrary suprema and compact elements. This resulted 
in the category $\AlgFrmJ$, which is equivalent to $\DMSLat$ (Corollary~\ref{cor: 
DMSLat = DAlgLat}) and dually equivalent to $\PGPries$ (Theorem~\ref{thm: duality 
between DAlgLat and GPries*}). The equivalence
and dual equivalence of their bounded versions was established in Corollary~\ref{cor: bounded}.
As we pointed out in the introduction,
there are several stronger notions of morphism between algebraic frames that are 
natural to consider. 
In this section we turn our attention to those and the corresponding morphisms between
generalized Priestley spaces, thus obtaining the bottom 
three layers of Figure~\ref{fig: first diagram}.

\subsection*{\textbf{Strong Priestley morphisms.}}

\begin{definition} \label{def: DMSLatS}
\hfill
\begin{enumerate}[$(1)$]
\item Let \label{DMSLatWS} $\DMSLatWS$ be the wide subcategory of 
$\DMSLat$ whose morphisms preserve all existing nonempty finite suprema,
and let \label{BDMSLatWS} $\BDMSLatWS$ be the full subcategory of $\DMSLatWS$ whose objects are bounded.
\item \label{DMSLatS} Let $\DMSLatS$ be the wide subcategory of $\DMSLatWS$ whose morphisms are bounded, 
and let \label{BDMSLatS} $\BDMSLatS$ be the full subcategory of $\DMSLatS$ whose objects are bounded.
\end{enumerate}
\end{definition}

\begin{remark}
In \cite{BJ08,BJ11} morphisms of $\BDMSLatS$ are called {\em sup-homomorphisms}.
\end{remark}

Let $L_1 ,L_2$ be algebraic frames and $\alpha\colon L_1\to L_2$ an 
$\AlgFrmJ$-morphism. 
Since we work with the dual orders on $\Kp(L_1)$ and $\Kp(L_2)$,
the restriction $\alpha|_{\Kp(L_1)}$ is a $\DMSLatWS$-morphism iff the following 
condition is satisfied:
\begin{equation*}\label{eqn: dagger}
\mbox{If } \varnothing \ne S \subseteq \Kp(L_1) \mbox{ is finite and } \bigwedge S \in \Kp(L_1), 
\mbox{ then } 
\alpha \left(\bigwedge S \right) = \bigwedge \alpha[S]. \tag{\sf{FInf}}
\end{equation*}

\begin{definition} \label{def: AlgFrmS}
\hfill
\begin{enumerate}[$(1)$]
\item Let \label{AlgFrmWS} $\AlgFrmWS$ be the wide subcategory of $\AlgFrmJ$ whose morphisms satisfy 
\eqref{eqn: dagger}, and \label{AlgFrmS} let $\AlgFrmS$ be the wide subcategory of $\AlgFrmWS$ whose morphisms are bounded.
\item \label{KAlgFrmWS} Let $\KAlgFrmWS$ be the full subcategory of $\AlgFrmWS$ and \label{KAlgFrmS} $\KAlgFrmS$ 
the full subcategory of $\AlgFrmS$ consisting of compact algebraic frames.
\end{enumerate}
\end{definition}

As an immediate consequence of Corollaries~\ref{cor: DMSLat = DAlgLat},~\ref{cor: DMSB and AlgFrmJ}, 
\ref{cor: BMSLat = KDAlgLat}, and the above observation, we obtain:

\begin{theorem} \label{thm: KDAlgLatS} 
\hfill
\begin{enumerate}[$(1)$]
\item \label{AlgFrmWS = DMSLatWS} $\AlgFrmWS$ is equivalent to $\DMSLatWS$.
\item \label{AlgFrmS = DMSLatS} $\AlgFrmS$ is equivalent to $\DMSLatS$.
\item \label{KAlgFrmWS = BDMSLatWS}$\KAlgFrmWS$ is equivalent to $\BDMSLatWS$.
\item \label{KAlgFrmS = BDMSLatS} $\KAlgFrmS$ is equivalent to $\BDMSLatS$.
\end{enumerate}
\end{theorem}

We next describe the corresponding categories of generalized Priestley spaces utilizing the notion of strong Priestley morphisms from \cite{BJ08,BJ11}.

\begin{definition} \label{def: functional and strong morphisms}
Let $X,Y$ be generalized Priestley spaces.
\begin{enumerate}[$(1)$]
\item \label{functional} A generalized Priestley morphism 
$R \subseteq X \times Y$ is {\em functional} if 
$R[x]$ has a least element for each $x\in X$. 
Let \label{GPriesF} $\GPriesF$ be the wide subcategory of $\GPries$ consisting of functional generalized
Priestley morphisms.
\item \label{strong} An order-preserving map $f \colon X \to Y$ is a 
{\em strong Priestley morphism} 
if $U \in \A(Y)$ implies $f^{-1}(U) \in \A(X)$. 
Let \label{GPriesS} $\GPriesS$ be the category of generalized Priestley spaces and strong Priestley
morphisms.
\end{enumerate}
\end{definition}
 
 The categories $\GPriesF$ and $\GPriesS$ consist of the same objects. 
If $R \subseteq X \times Y$ is a functional generalized Priestley morphism, then sending 
$x$ to the least element of $R[x]$ defines a strong Priestley morphism 
$f_R \colon X \to Y$. 
Conversely, if $f \colon X \to Y$ is a strong Priestley morphism, then 
$R_f \subseteq X \times Y$ defined by $x \rel{R_f} y$ iff $f(x) \le y$ is a 
functional generalized Priestley morphism. 
Moreover, 
\[
f_{S*R}=f_S \circ f_R, \ R_{g\circ f}=R_g * R_f, \ R=R_{f_R}, \mbox{ and } f=f_{R_f}. 
\]
We thus obtain: 

\begin{theorem} \label{thm: GPSF = GPSS}
\emph{\cite[Prop.~9.5]{BJ08}} The categories $\GPriesF$ and $\GPriesS$ are isomorphic. 
\end{theorem}

In \cite[Thm.~9.6]{BJ08} it was shown that $\GPriesS$ is dually equivalent to $\BDMSLatS$. 
This together with Theorem~\ref{thm: KDAlgLatS}\ref{KAlgFrmS = BDMSLatS} yields:

\begin{corollary} \label{cor: KAlgFrmS = BDMSLatS}
The category $\KAlgFrmS$ is equivalent to $\BDMSLatS$ and dually equi\-valent to $\GPriesS$.
\end{corollary}

The notions of functional generalized Priestley morphism and strong Priestley morphism directly generalize 
to the pointed case. Let $(X, m)$ and $(Y, n)$ be pointed generalized Priestley spaces
with $m, n$ isolated. If $R \subseteq X \times Y$ is a functional
Priestley morphism, then $R^-$ may not be functional since $R^-[x]$ could be empty for some $x \in X^-$.
Similarly, if $f \colon X \to Y $ is a strong Priestley morphism, then the restriction $f^- \colon X^- \to Y^-$ 
may only be a partial function. For $R^-$ to be functional, and hence for $f^-$ to be a total function, 
an additional condition is required. We 
thus arrive at the following wide subcategories of $\PGPries$ consisting of two kinds of functional 
generalized Priestley morphisms, and their corresponding categories of
pointed generalized Priestley spaces and strong Priestley morphisms.

\begin{definition}\ \label{def: functional and strong}
\begin{enumerate}[$(1)$]
\item Let $\PGPriesF$ be the wide subcategory of $\PGPries$ consisting of
functional morphisms, and let $\PGPriesSF$ be the wide subcategory of $\PGPriesF$ whose morphisms 
$R \subseteq X \times Y$ additionally satisfy
$x \ne m$ implies 
$R[x] \ne \{n\}$ provided $m, n$ are isolated.
\item 
Let \label{PGPriesS} $\PGPriesWS$ be the category of pointed generalized Priestley spaces and
strong Priestley morphisms, and let $\PGPriesS$ be the wide subcategory of $\PGPriesWS$ whose morphisms
$f \colon X \to Y$ additionally satisfy $f[X^-] \subseteq Y^-$ provided $m, n$ are isolated.
\end{enumerate}
\end{definition}

\begin{remark}
The subscript {\sf F} abbreviates functional, {\sf S} abbreviates strong, and {\sf T} abbreviates total 
because a $\PGPriesWS$-morphism $f$ is a $\PGPriesS$-morphism iff $f^- \colon X^- \to Y^-$ is a total function provided $m, n$ are isolated. 
\end{remark}

We point out that identity morphisms in $\PGPriesWS$ and $\PGPriesS$ are identity functions and 
composition is usual function composition. 
As a direct generalization of \cite[Prop.~9.5]{BJ08}, we obtain:
 
\begin{proposition} \label{prop: F and S}
\hfill
\begin{enumerate}[$(1)$]
\item \label{F and S 1} $\PGPriesF$ is isomorphic to $\PGPriesWS$.
\item $\PGPriesSF$ is isomorphic to $\PGPriesS$.
\end{enumerate}
\end{proposition}

To describe the corresponding categories of distributive meet-semilattices and algebraic frames, 
we require the following two lemmas.

\begin{lemma} \label{lem: sup = strong WS}
Let $\alpha \colon L_1 \to L_2$ be an $\AlgFrmJ$-morphism and $r \colon L_2 \to L_1$ 
its right adjoint. The following are equivalent.
\begin{enumerate}[$(1)$]
\item \label{sup = strong 1} $\alpha$ is an $\AlgFrmWS$-morphism.
\item \label{sup = strong 2} $r[Y_{L_2}] \subseteq Y_{L_1}$.
\item \label{sup = strong 3} $r \colon Y_{L_2} \to Y_{L_1}$ is a strong 
Priestley morphism whose corresponding 
functional generalized Priestley morphism is $R_\alpha$.
\end{enumerate}
\end{lemma}

\proof
\ref{sup = strong 1}$\Rightarrow$\ref{sup = strong 2}. Let $p \in Y_{L_2}$. 
Since $r$ preserves arbitrary meets, $r(1)=1$, 
so we may assume that $p \in \PP(L_2)$. If $r(p)=1$, there is nothing to prove. Suppose 
that $r(p) \ne 1$. We prove that $r(p) \in \PP(L_1)$.
Let $a_1,\dots,a_n \in L_1$ with $a_1 \wedge \cdots \wedge a_n \ll r(p)$. 
Since $L_1$ is algebraic, there is 
$k \in \Kp(L_1)$ with $a_1 \wedge \cdots \wedge a_n \le k \le r(p)$. We show that $a_i 
\le r(p)$ for some $i$. If not, 
then there are compact $l_i$ with $l_i \le a_i$ and $l_i \not\le r(p)$.
Since $l_1 \wedge \cdots \wedge l_n \le k$ and $L_1$ is distributive, $(l_1 \vee k) 
\wedge \cdots \wedge (l_n \vee k) = k$. Thus, the meet of $l_1 \vee k, \dots, l_n \vee 
k$ exists in $\Kp(L_1)$, so $\alpha(k) = \alpha(l_1 \vee k) \wedge \cdots \wedge 
\alpha(l_n \vee k)$ by (1). Since $\alpha$ preserves joins and $L_2$ is distributive,
\[
\alpha(k) = \bigwedge_{i=1}^n (\alpha(l_i) \vee \alpha(k)) = \left(\bigwedge_{i=1}^n 
\alpha(l_i)\right) \vee \alpha(k).
\]
Therefore, $\alpha(l_1) \wedge \cdots \wedge \alpha(l_n) \le \alpha(k)$. Since $k \le 
r(p)$ implies $\alpha(k) \le p$ and $\alpha(k) \in \Kp(L_2)$, we have $\alpha(l_1) \wedge 
\cdots \wedge \alpha(l_n) \ll p$. Because $p \in \PP(L_2)$, it follows that $\alpha(l_i) 
\le p$ for some $i$. Consequently, $l_i \le r(p)$ for some $i$. The obtained contradiction 
proves that $r(p) \in \PP(L_1)$.

\ref{sup = strong 2}$\Rightarrow$\ref{sup = strong 3}. Let $p\in Y_{L_2}$. 
By \ref{sup = strong 2}, $r(p) \in Y_{L_1}$. Since $q \in R_\alpha[p]$ iff $r(p) \le q$, 
we see that $r(p)$ 
is the least element of $R_\alpha[p]$. Thus, $R_\alpha$ is functional, and $r$ is its 
corresponding strong Priestley morphism.

\ref{sup = strong 3}$\Rightarrow$\ref{sup = strong 1}. Let 
$S \subseteq \Kp(L_1)$ be nonempty finite and $\bigwedge S \in \Kp(L_1)$. 
Since $\alpha$ is order preserving, $\alpha(\bigwedge S) \le \bigwedge \alpha[S]$. 
Suppose that $\bigwedge \alpha[S] \not\le \alpha(\bigwedge S)$. By Lemma~\ref{lem: 
existence of primes and clopens}\ref{primes}, there is $p \in \p(L_2)$ with 
$\bigwedge \alpha[S] \not\le p$ and 
$\alpha(\bigwedge S) \le p$. This yields $\bigwedge S \le r(p)$. Since $\bigwedge S \in 
\Kp(L_1)$, we see that $\bigwedge S \ll r(p)$, so $s \le r(p)$ for some $s \in S$ since 
$r(p) = 1$ or $r(p) \in \PP(L_1)$. Therefore, $\alpha(s) \le p$, and so $\bigwedge 
\alpha[S] \le p$. The obtained contradiction proves that $\bigwedge \alpha[S]\le 
\alpha(\bigwedge S)$, hence the equality. Thus, $\alpha$ is an 
$\AlgFrmWS$-morphism.
\endproof

\begin{lemma} \label{lem: sup = strong S}
Let $L_1$ and $L_2$ be compact algebraic frames, $\alpha \colon L_1 \to L_2$ an $\AlgFrmJ$-morphism, 
and $r \colon L_2 \to L_1$ its right adjoint. The following are equivalent.

\begin{enumerate}[$(1)$]
\item \label{sup = strong 1S} $\alpha$ is an $\AlgFrmS$-morphism.
\item \label{sup = strong 2S} $r[Y_{L_2}] \subseteq Y_{L_1}$ and
$r(p) \ne 1$ for each $p \in \PP(L_2)$.
\item \label{sup = strong 3S} $r \colon Y_{L_2} \to Y_{L_1}$ is a strong 
Priestley morphism whose corresponding 
functional generalized Priestley morphism is $R_\alpha$ and
$R_\alpha[p] \ne \{1\}$ for each $p \in \PP(L_2)$.
\end{enumerate}
\end{lemma}

\proof
\ref{sup = strong 1S}$\Rightarrow$\ref{sup = strong 2S}. By Lemma~\ref{lem: sup = strong WS}, 
$r[Y_{L_2}] \subseteq Y_{L_1}$.  Since $L_1, L_2$ are compact and
$\alpha$ is a $\AlgFrmS$-morphism,  we have $\alpha(1) = 1$. 
If $p \in \PP(L_2)$ with $r(p) = 1$, then  $\alpha(1) \le p$, so $p = 1$, which is false. 
Therefore, $r(p) \ne 1$.

\ref{sup = strong 2S}$\Rightarrow$\ref{sup = strong 3S}. By Lemma~\ref{lem: sup = strong WS}, 
$r$ is a strong Priestley morphism whose corresponding functional generalized Priestley morphism is 
$R_\alpha$.  If $p \in \PP(L_2)$, then $r(p) \ne 1$ by \ref{sup = strong 2}. 
Therefore, by Lemma~\ref{lem: existence of primes and clopens}\ref{primes}, there is 
$q \in \p(L_1)$ with $r(p) \le q$. Consequently, $q \in R_\alpha[p]$, and so $R_\alpha[p] \ne \{1\}$. 

\ref{sup = strong 3S}$\Rightarrow$\ref{sup = strong 1S}. By Lemma~\ref{lem: sup = strong WS}, 
$\alpha$ is an $\AlgFrmWS$-morphism. It then suffices to show that $\alpha(1) = 1$. If 
$\alpha(1) \ne 1$, then there is $p \in \p(L_2)$ with $\alpha(1) \le p$. 
This yields $1 \le r(p)$, so $R_\alpha[p] = \{1\}$, contradicting 
\ref{sup = strong 3}. Therefore, $\alpha(1) = 1$, and thus $\alpha$ is an $\AlgFrmS$-morphism.
\endproof

Lemmas~\ref{lem: sup = strong WS} and \ref{lem: sup = strong S} give the following: 

\begin{theorem} \label{thm: DAlgLatS}
\hfill
\begin{enumerate}[$(1)$]
\item \label{DAlgLatS 1} The duality of Theorem~\emph{\ref{thm: duality between DAlgLat and GPries*}} 
between $\PGPries$ and $\AlgFrmJ$ restricts to a duality between $\PGPriesF$ and $\AlgFrmWS$ 
and yields a duality between $\PGPriesWS$ and $\AlgFrmWS$. 
\item \label{DAlgLatS 2} The duality also restricts to a duality between $\PGPriesSF$ and $\AlgFrmS$ 
and yields a duality between $\PGPriesS$ and $\AlgFrmS$. 
\end{enumerate}
\end{theorem}

\proof
\ref{DAlgLatS 1}.
Let $X$ be a pointed generalized Priestley space. Then $\Upsilon_X$ is a functional 
morphism since the least element of $\Upsilon_X[x]$ is $\varepsilon_X(x)$ by 
the definition of $\Upsilon_X$ and 
Lemma~\ref{lem: varepsilon is an iso}\ref{varepsilon 1}. 
Therefore, $\Upsilon_X$ is a $\PGPriesF$-isomorphism. In addition, 
if $L$ is an algebraic frame, then it follows from the proof of 
Proposition~\ref{prop: eta is natural} that $\eta_L$ is a poset isomorphism. 
Therefore, it is an $\AlgFrmS$-isomorphism. From these observations, 
Proposition~\ref{prop: F and S}\ref{F and S 1}, and Lemma~\ref{lem: sup = strong WS}
it follows that the duality of Theorem~\ref{thm: duality between DAlgLat and GPries*} restricts to 
a duality between $\PGPriesF$ and $\AlgFrmWS$.

\ref{DAlgLatS 2}. The proof is similar to that of \ref{DAlgLatS 1} except that 
Lemma~\ref{lem: sup = strong S} is used instead of Lemma~\ref{lem: sup = strong WS}.
\endproof

As a consequence of Theorems~\ref{thm: KDAlgLatS}
and \ref{thm: DAlgLatS}, 
we obtain: 

\begin{corollary} \label{PGPries = DMSLatS}
\hfill
\begin{enumerate}[$(1)$]
\item $\AlgFrmWS$ is equivalent to $\DMSLatWS$ and dually equivalent to $\PGPriesWS$.
\item $\AlgFrmS$ is equivalent to $\DMSLatS$ and dually equivalent to $\PGPriesS$.
\end{enumerate}
\end{corollary}

\begin{remark} \label{rem: V and Y on S categories}
If $\alpha\colon L_1 \to L_2$ is an $\AlgFrmWS$-morphism, then 
$\Y(\alpha)=R_\alpha$. Applying Lemma~\ref{lem: sup = strong WS}, 
the corresponding strong Priestley morphism is the right 
adjoint $r$ to $\alpha$ restricted to $Y_{L_2}$. Therefore, we may view that 
$\Y\colon \AlgFrmWS \to \PGPriesWS$ acts on morphisms by sending $\alpha$ to 
$r\colon Y_{L_2} \to Y_{L_1}$. 

If $R \subseteq X \times Y$ is a 
$\PGPriesWS$-morphism, then $\V(R)=\Box_R$. Let $f$ be the strong 
Priestley morphism corresponding
to $R$. Then  $\Box_R 
C = f^{-1}(C)$ for each $C \in \V(Y)$ (see \cite[Lem.~9.2]{BJ08}). 
Thus, we may view that $\V\colon  \PGPriesWS \to \AlgFrmWS$ 
acts on morphisms by sending a strong Priestley morphism $f$ to $f^{-1}$.
Similar observations apply to $\Y\colon \AlgFrmS \to \PGPriesS$ and $\V\colon  \PGPriesS \to \AlgFrmS$.
\end{remark}

We next restrict the dualities of Theorem~\ref{thm: DAlgLatS} to 
compact algebraic frames.
As is customary, by a \emph{partial function} from 
$X$ to $Y$ we mean a function from a subset of $X$ to $Y$. We denote such partial function by 
$f \colon X \dashrightarrow Y$.

\begin{definition} \label{def: partial strong Priestley morphism}
\hfill
\begin{enumerate}[$(1)$]
\item Let $X, Y \in \GPries$. A \emph{partial strong Priestley morphism} between $X$ and $Y$ is a 
partial function $f \colon X \dashrightarrow Y$ whose domain is a clopen downset of $X$ such 
that $U \in \A(Y)$ implies $X \setminus f^{-1}(Y \setminus U) \in \A(X)$.
\item Let \label{GPriesPS} $\GPriesPS$ be the category of generalized Priestley spaces and partial 
strong Priestley morphisms.
\end{enumerate}
\end{definition}

\begin{proposition} \label{prop: GPriesPS = subcategory of PGPriesWS}
$\GPriesPS$ is equivalent to the full subcategory of $\PGPriesWS$ consisting 
of generalized Priestley spaces with isolated maxima.
\end{proposition}

\proof
By Remark~\ref{rem: m iso}, $\GPries$ is equivalent to the full subcategory of $\PGPries$ whose
objects have isolated maxima. Let $(X, m)$ and $(Y, n)$ be generalized Priestley spaces with
$m, n$ isolated. As in the remark, set $X^- = X \setminus \{m\}$ and 
$Y^- = Y \setminus \{n\}$. By Proposition~\ref{prop: F and S}\ref{F and S 1}, functional morphisms between $X$ and $Y$ correspond to strong
Priestley morphisms. Therefore, it is enough to show that the latter correspond to partial
strong Priestley morphisms between $X^-$ and $Y^-$.

Let $f \colon X \to Y$ be a strong Priestley morphism. 
Set $C = f^{-1}(Y^-)$. Then $f \colon C \to Y^-$ is a well-defined 
function, and we show that $f \colon X^- \dashrightarrow Y^-$ is a partial strong Priestley morphism. First, $C$ 
is a clopen downset of $X$ since $Y^-$ is a clopen downset of $Y$ and $f$ is a continuous order-preserving function. 
It is then a clopen downset of 
$X^-$ since $m \notin C$. Let $U \in \A(Y^-)$. Then 
 $V := U \cup \{n\} \in \A(Y)$ and we have
\begin{align*}
X^- \setminus f^{-1}(Y^- \setminus U) &= \{ x \in X^- \mid x \notin f^{-1}(Y^- \setminus U) \} \\
&= \{ x \in X^- \mid x \notin C \textrm{ or } x \in C \ \&\  f(x) \in U \} \\
&= \{x \in X^- \mid f(x) = n \textrm{ or } f(x) \in U \} \\
&= \{x \in X^- \mid f(x) \in V \} \\
&= X^- \cap f^{-1}(V) \in \A(X^-)
\end{align*}
because $f^{-1}(V) \in \A(X)$. Therefore, $f \colon X^- \dashrightarrow Y^-$ is a partial strong 
Priestley morphism.

Conversely, let $f \colon X^- \dashrightarrow Y^-$ be a partial strong Priestley morphism, and let 
$C$ be the domain of $f$. Extend $f$ to a function $g \colon X \to Y$ by setting $g(x) = n$ for each 
$x \in X \setminus C$. Since $f$ is order preserving and $n$ is the top of $Y$, we see that
$g$ is order preserving. To show that $g$ is a strong Priestley morphism, let $V \in \A(Y)$. Set 
$U = V \setminus \{n\} \in \A(Y^-)$. We have 
$X^- \setminus f^{-1}(Y^- \setminus U) = (X^- \setminus C) \cup f^{-1}(U)$. Therefore,
\begin{align*}
g^{-1}(V) &= \{ x \in X \mid g(x) \in V\} = g^{-1}(n) \cup g^{-1}(U) \\
&= \{m\} \cup (X^- \setminus C) \cup f^{-1}(U) \\
&= \{m\} \cup (X^- \setminus f^{-1}(Y^- \setminus U)) \in \A(X)
\end{align*}
since $X^- \setminus f^{-1}(Y^- \setminus U) \in \A(X^-)$.
Thus, $g$ is a strong Priestley morphism. 
Consequently, $\GPriesPS$ is equivalent to the full subcategory of $\PGPriesWS$ consisting of generalized Priestley spaces
with isolated maxima.
\endproof

As an immediate consequence of Proposition~\ref{prop: GPriesPS = subcategory of PGPriesWS} we obtain:

\begin{theorem} \label{thm: GPriesS}
The duality of Theorem~\emph{\ref{thm: DAlgLatS}} 
between $\PGPriesWS$ and $\AlgFrmWS$ restricts to a duality between $\GPriesPS$ and $\KAlgFrmWS$.
\end{theorem}

This together with Theorem~\ref{thm: KDAlgLatS}\ref{KAlgFrmWS = BDMSLatWS} gives:

\begin{corollary} \label{GPriesS = BDMSLatS}
The category $\KAlgFrmWS$ is equivalent to $\BDMSLatWS$ and dually equivalent to $\GPriesPS$.
\end{corollary}

Putting Corollaries~\ref{cor: KAlgFrmS = BDMSLatS},  \ref{PGPries = DMSLatS}, and 
\ref{GPriesS = BDMSLatS} together yields the middle two layers of Figure~\ref{fig: first diagram}.

\subsection*{\textbf{Strong Priestley morphisms preserving prime elements.}}

To obtain the bottom layer of Figure~\ref{fig: first diagram}, we turn to 
the most natural category of algebraic frames, in which morphisms are frame 
homomorphisms preserving compact elements (that is, $\AlgFrmJ$-morphisms preserving 
finite infima). 

\begin{definition} \label{def: AlgFrm}
Let $\AlgFrm$ be the wide subcategory of $\AlgFrmJ$ whose morphisms 
preserve finite infima. 
\end{definition}

\begin{remark}
Clearly $\AlgFrm$ is also a wide subcategory of $\AlgFrmS$.
\end{remark}

We next describe the wide subcategory of $\PGPriesS$ that is dually equivalent to 
$\AlgFrm$. 

\begin{definition} \label{def: PGPriesP}
Let $\PGPriesP$ denote the wide subcategory of $\PGPriesWS$ whose morphisms 
$f \colon X \to Y$ satisfy $f[X_0] \subseteq Y_0$, and define $\GPriesP$ similarly.
\end{definition}

 \begin{remark}
Let $f \colon X \to Y$ be a $\PGPriesP$-morphism and suppose that $m, n$ are isolated. 
Then $\{m\} \in \A(X)$, so $\max(X \setminus \{m\}) \subseteq X_0$, which implies that 
$\max X^- \subseteq X_0$. Let $x \in X^-$. By Definition~\ref {def: PGPries}\ref{order-dense}, 
there is $z \in X_0$ with $x \le z$. Therefore, $f(x) \le f(z)$, and $f(z) \in Y_0$ by hypothesis. 
Consequently, $f(x) \ne n$. Thus, $f[X^-] \subseteq Y^-$, and hence $\PGPriesP$ is a wide 
subcategory of $\PGPriesS$.
\end{remark}

\begin{lemma} \label{lem: frame hom}
Let $\alpha \colon L_1 \to L_2$ be an $\AlgFrmJ$-morphism and 
$r \colon L_2 \to L_1$ its right adjoint. The following are equivalent.
\begin{enumerate}[$(1)$]
\item \label{frame hom 1} $\alpha$ is a frame homomorphism.
\item \label{frame hom 2} $r\colon Y_{L_2} \to Y_{L_1}$ is a strong Priestley 
morphism and $r[\p(L_2)] \subseteq \p(L_1)$.
\item \label{frame hom 3} Let $S$ be a finite subset of $\Kp(L_1)$ and 
$k \in \Kp(L_2)$ with $k \le \bigwedge 
\alpha[S]$. Then there is $c \in \Kp(L_1)$ with $c \le \bigwedge S$ and $k \le \alpha(c)$.
\end{enumerate}
\end{lemma}

\proof
\ref{frame hom 1}$\Rightarrow$\ref{frame hom 3}. Let $S$ be a finite subset 
of $\Kp(L_1)$ and $k \in \Kp(L_2)$ 
with $k \le \bigwedge \alpha[S]$. 
By~\ref{frame hom 1}, $k \le \alpha(\bigwedge S)$. Since $k$ is compact, 
$\alpha(\bigwedge S) = \bigvee \{ \alpha(c) \mid c \in \Kp(L_1), \, c \le \bigwedge S\}$, 
and the join is directed, 
there is $c \in \Kp(L_1)$ with $c \le \bigwedge S$ and $k \le \alpha(c)$.

\ref{frame hom 3}$\Rightarrow$\ref{frame hom 2}. We first show that 
if $S \subseteq \Kp(L_1)$ is finite with $\bigwedge S \in \Kp(L_1)$, then 
$\alpha(\bigwedge S) = \bigwedge \alpha[S]$. The inequality 
$\alpha(\bigwedge S) \le \bigwedge \alpha[S]$ 
holds since $\alpha$ is order preserving. Let $k \in \Kp(L_2)$ with $k \le \bigwedge 
\alpha[S]$. By \ref{frame hom 3}, there is $c \in \Kp(L_1)$ with $c \le \bigwedge S$ 
and $k \le \alpha(c)$. Therefore, $k \le \alpha(c) \le \alpha(\bigwedge S)$. 
Since $\bigwedge \alpha[S]$ is 
the join of the compact elements below it, we see that $\bigwedge \alpha[S] \le 
\alpha(\bigwedge S)$, hence the equality. 
This by Lemma~\ref{lem: sup = strong S} 
implies that $r$ is a strong Priestley morphism. 

We next show that if $p \in \p(L_2)$, then $r(p) \in \p(L_1)$. By the previous paragraph, 
$\alpha(1) = \alpha(\bigwedge \varnothing) = \bigwedge \alpha[\varnothing] = 1$. 
Therefore, $p \ne 1$ implies that $r(p) \ne 1$.
Let $a, b \in L_1$ with 
$a \wedge b \le r(p)$. Then $\alpha(a \wedge b) \le p$. Suppose that 
$\alpha(a) \wedge \alpha(b) \not\le p$. Then there is $k \in \Kp(L_2)$ with 
$k \le \alpha(a) \wedge \alpha(b)
$ and $k \not\le p$. By \ref{frame hom 3}, there is $c \in \Kp(L_1)$ with 
$c \le a \wedge b$ and $k \le 
\alpha(c)$. Therefore, $\alpha(c) \not\le p$, so $c \not\le r(p)$. This 
contradicts $c \le a \wedge b \le r(p)$. Thus, $\alpha(a) \wedge \alpha(b) \le p$, 
so $\alpha(a) \le p$ or $\alpha(b) \le p$ because $p \in \p(L_2)$. 
Consequently, $a \le r(p)$ or $b \le r(p)$, and 
hence $r(p) \in \p(L_1)$.

\ref{frame hom 2}$\Rightarrow$\ref{frame hom 1}. It is sufficient to show that 
$\alpha$ preserves binary meets and $
\alpha(1) = 1$. Let $a, b \in L_1$. Then $\alpha(a \wedge b) \le \alpha(a) \wedge 
\alpha(b)$ since $\alpha$ is order preserving. Suppose that 
$\alpha(a) \wedge \alpha(b) \not\le \alpha(a \wedge b)$. 
By Lemma~\ref{lem: existence of primes and clopens}\ref{primes}, there is $p \in 
\p(L_2)$ with $\alpha(a) \wedge \alpha(b) \not\le p$ and $\alpha(a \wedge b) \le p$. 
This implies that $a \wedge b \le r(p)$. By \ref{frame hom 2}, $a \le r(p)$ or $b \le r(p)$. 
Thus, $\alpha(a) \le p$ or $\alpha(b) \le p$, and hence $\alpha(a) \wedge \alpha(b) \le p$. 
The obtained contradiction shows that $\alpha(a) \wedge \alpha(b) \le \alpha(a \wedge b)$, 
hence the equality. If $\alpha(1) \ne 1$, then there is $p \in \p(L_2)$ with $\alpha(1) \le p$. 
By \ref{frame hom 2}, 
$1 \le r(p) \in \p(L_1)$, a contradiction. Thus, $\alpha(1) = 1$.
\endproof

\begin{theorem} \label{thm: AlgFrm and PGPSP}
The duality of Theorem~\emph{\ref{thm: DAlgLatS}\ref{DAlgLatS 2}} between $\PGPriesS$ and  
$\AlgFrmS$ restricts to a duality between $\PGPriesP$ and $\AlgFrm$.
\end{theorem}

\proof
Let $X$ be a pointed generalized Priestley space. By Lemma~\ref{lem: varepsilon is an 
iso}, $\varepsilon_X$ is a 
$\PGPriesP$-isomorphism. 
In addition, if $L \in \AlgFrm$, then $\eta_L$ is a poset isomorphism, so a frame 
isomorphism.
Thus, Lemma~\ref{lem: frame hom} yields that the duality of Theorem~\ref{thm: DAlgLatS}\ref{DAlgLatS 2} 
between $\PGPriesS$ and  $\AlgFrmS$
restricts to a duality between $\PGPriesP$ and $\AlgFrm$.
\endproof

Lemma~\ref{lem: frame hom}\ref{frame hom 3} suggests the following definition. 
To simplify notation, we 
denote the set of upper bounds of a subset $S$ of a poset by $S^u$.

\begin{definition} \label{def: DMSLatP}
We denote by $\DMSLatP$ the wide subcategory of $\DMSLat$ whose morphisms $
\alpha\colon M_1\to M_2$ satisfy the following condition: 
\begin{equation} \label{eqn: *}
\textrm{If }S \subseteq M_1 \textrm{ is finite and }x \in \alpha[S]^u \textrm{, then } 
\exists c \in S^u : \alpha(c) \le x. \tag{\sf{P}}
\end{equation}
\end{definition}

\begin{remark}
The subscript ${\sf P}$ in the above definition is motivated by Lemma~\ref{lem: hansoul}\ref{hansoul 2}
where we show that $\alpha$ is a $\DMSLatP$-morphism iff $\alpha$ pulls 
prime filters back to prime filters.
\end{remark}

\begin{lemma} \label{lem: P implies S}
$\DMSLatP$ is a wide subcategory of $\DMSLatS$.
\end{lemma}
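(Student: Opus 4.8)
The plan is to observe that ``wide subcategory'' only constrains the morphism classes, since $\DMSLat_{\sf P}$ and $\DMSLat_{\sf S}$ are both wide subcategories of $\DMSLat$ and hence share its objects, and composition and identities in both are inherited from $\DMSLat$. So it suffices to show that every $\DMSLat_{\sf P}$-morphism $\alpha:M_1\to M_2$ is a sup-homomorphism, i.e.\ preserves every existing finite join.

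First I would fix a finite $S\subseteq M_1$ for which $\bigvee S$ exists in $M_1$ and aim to prove $\alpha(\bigvee S)=\bigvee\alpha[S]$ in $M_2$. Since $\alpha$ is a meet-semilattice homomorphism it is order-preserving, so from $s\le\bigvee S$ for all $s\in S$ we get $\alpha(\bigvee S)\in\alpha[S]^u$. For the reverse, I would take an arbitrary $x\in\alpha[S]^u$ and apply the defining property of $\DMSLat_{\sf P}$-morphisms to produce $c\in S^u$ with $\alpha(c)\le x$; the key step is then the elementary remark that any upper bound $c$ of $S$ satisfies $\bigvee S\le c$, whence $\alpha(\bigvee S)\le\alpha(c)\le x$ by monotonicity. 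Thus $\alpha(\bigvee S)$ is the least element of $\alpha[S]^u$, which is the desired equality.

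Finally I would note that the empty join is handled by the same argument with no extra work: for $S=\varnothing$ we have $\bigvee S=0$ (the bottom of $M_1$, when it exists), $S^u=M_1$, and $\alpha[S]^u=M_2$, so the $\DMSLat_{\sf P}$-condition yields, for each $x\in M_2$, some $c$ with $\alpha(0)\le\alpha(c)\le x$, forcing $\alpha(0)$ to be the bottom of $M_2$. I do not expect any genuine obstacle here; the proof is a direct unwinding of the two definitions, and the only point needing (minimal) care is the observation that an upper bound of $S$ dominates $\bigvee S$ once the latter exists.
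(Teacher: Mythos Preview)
Your proposal is correct and follows essentially the same argument as the paper: show $\alpha(\bigvee S)$ is an upper bound of $\alpha[S]$ by monotonicity, then use the $\DMSLat_{\sf P}$-condition to produce $c\in S^u$ with $\alpha(c)\le x$ for an arbitrary upper bound $x$, and conclude via $\bigvee S\le c$. Your explicit treatment of the empty case is a small bonus; the paper's proof handles all finite $S$ uniformly without singling it out.
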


\proof
Let $\alpha \colon M_1 \to M_2$ be a $\DMSLatP$-morphism. 
Let $\varnothing \ne S$ be a finite subset of $M_1$ such that $\bigvee S$ exists in $M_1$. Since $
\alpha$ is order preserving, $\bigvee\alpha[S] \le \alpha(\bigvee S)$. Let $x \in 
\alpha[S]^u$. Since $\alpha$ is a $\DMSLatP$-morphism, there is $c \in S^u$ 
such that $\alpha(c) \le x$. Therefore, $\bigvee S \le c$, so $\alpha(\bigvee S) \le 
\alpha(c)$, and hence $\alpha(\bigvee S) \le x$. Thus, $\alpha(\bigvee S) = 
\bigvee\alpha[S]$, and so $\alpha$ is a $\DMSLatWS$-morphism. To see that it is a $\DMSLatS$-morphism,
suppose that $M_1, M_2$ are bounded. 
Setting $S = \varnothing$, (\ref{eqn: *}) implies that
for $x = 0$ there is $c \in M_2$ with $\alpha(c) \le 0$. This forces $\alpha(c) = 0$, and therefore,
$\alpha(0) = 0$. Consequently, $\DMSLatP$ is a wide subcategory of $\DMSLatS$.
\endproof

Putting Theorems~\ref{thm: KDAlgLatS}\ref{AlgFrmWS = DMSLatWS}, \ref{thm: AlgFrm and PGPSP} and 
Lemmas~\ref{lem: frame hom}, \ref{lem: P implies S} together yields:

\begin{theorem} \label{thm: DMSP}
$\AlgFrm$ is equivalent to $\DMSLatP$ and dually equivalent to $\PGPriesP
$.
\end{theorem}

\begin{definition} \label{def: KAlgFrm and BDMSLatP}
\hfill
\begin{enumerate}[$(1)$]
\item  Let $\KAlgFrm$ be the full subcategory of $\AlgFrm$ consisting of 
compact algebraic frames.\label{KAlgFrm}
\item \label{BDMSLatP} Let $\BDMSLatP$ be the full subcategory of $\DMSLatP$ 
consisting of bounded distributive meet-semilattices.
\end{enumerate}
\end{definition}

As an immediate consequence of Theorem~\ref{thm: DMSP} we obtain:

\begin{theorem} \label{thm: KAlgFrm}
$\KAlgFrm$ is equivalent to $\BDMSLatP$ and dually equivalent to $\GPriesP$.
\end{theorem}

Putting Theorems~\ref{thm: DMSP} and \ref{thm: KAlgFrm} together yields the bottom layer of 
Figure~\ref{fig: first diagram}. 
We point out that $\BDMSLatP$ is a proper subcategory of $\BDMSLatS$ (see Example~\ref{ex: S not P}).
This contrasts with the bounded distributive lattice case, where the full subcategories of $\BDMSLatS$ 
and $\BDMSLatP$ consisting of lattices are equal (see Remark~\ref{rem: morphisms of DLatM}).

\begin{example} \label{ex: S not P}
Let $M$ be the distributive meet-semilattice shown below.

\begin{center}
\begin{tikzpicture}
\draw [fill] (0, 0) circle[radius=.04];
\draw [fill] (-.5, .5) circle[radius=.04];
\draw [fill] (.5, .5) circle[radius=.04];
\node [below] at (0, -.05) {$0$};
\node [left] at (-.5,.5) {$a$};
\node [right] at (.5, .5) {$b$};
\node [right] at (0, 3) {$1$};
\node at (0, 1) {$\vdots$};
\draw [fill] (0, 1.5) circle[radius=.04];
\draw [fill] (0, 2.0) circle[radius=.04];
\draw [fill] (0, 2.5) circle[radius=.04];
\draw [fill] (0, 3) circle[radius=.04];
\draw (0, 1.5) -- (0,3);
\draw (0, 0) -- (.5, .5);
\draw (0, 0) -- (-.5, .5);
\end{tikzpicture}
\end{center}
Set $F = M \setminus \{ a, b, 0\}$ and define $\alpha \colon M \to M$ by 
\[
\alpha(x) =
\begin{cases}
1 & \text{if } x \in F, \\
x & \text{if } x \in \{a, b, 0\}.
\end{cases}
\]
Then $\alpha$ is a 
$\DMSLatS$-morphism. We show that $\alpha$ is not a $\DMSLatP$-morphism. 
For, let $x \in F$ with $x < 1$. If $S = \{a, b\}$, then $x \in \alpha[S]^u$, but 
since $S^u = F$, there is no $c \in S^u$ with $\alpha(c) \le x$. Therefore, 
Condition (\ref{eqn: *}) does not hold, and hence $\alpha$ is not a 
$\DMSLatP$-morphism. 
This shows that $\BDMSLatP$ is a proper wide subcategory of $\BDMSLatS$. 
Corollary~\ref{GPriesS = BDMSLatS} and Theorem~\ref{thm: KAlgFrm} then show 
that $\KAlgFrm$ is a proper wide subcategory of $\KAlgFrmS$ and $\GPriesP$ is a 
proper wide subcategory of $\GPriesS$.
\end{example}

We conclude this section by pointing out that 
Theorem~\ref{thm: KAlgFrm}
yields the duality result of Hansoul and Poussart \cite{HP08}. To see this, we 
recall that a nonempty downset $I$ of a meet-semilattice $M$ is an {\em ideal} if $a,b 
\in I$ implies $\up a \cap \up b \cap I \ne \varnothing$. It is easy to see that $I$ is an 
ideal iff for each finite subset $S$ of $I$, we have $\bigcap_{s\in S} \up s \cap I \ne 
\varnothing$. As usual, an ideal $I$ is {\em proper} if $I \ne M$ and a proper ideal $I$ 
is {\em prime} if $a\wedge b \in I$ implies $a\in I$ or $b\in I$.

\begin{lemma} \label{lem: hansoul}
Let $M_1, M_2 \in \DMSLat$ and $\alpha \colon M_1 \to M_2$ be a 
$\DMSLat$-morphism. The following are equivalent.
\begin{enumerate}[$(1)$]
\item \label{hansoul 1} $\alpha$ is a $\DMSLatP$-morphism.
\item \label{hansoul 2} If $P$ is a prime filter of $M_2$, then $\alpha^{-1}(P)$ 
is a prime filter of $M_1$.
\item \label{hansoul 3} If $I$ is an ideal of $M_2$, then $\alpha^{-1}(I)$ is an 
ideal of $M_1$.
\end{enumerate}
\end{lemma}

\proof
\ref{hansoul 1}$\Rightarrow$\ref{hansoul 2}. Let $P$ be a prime filter of $M_2$. Then 
$\alpha^{-1}(P)$ is a 
filter of $M_1$. By \ref{hansoul 1} and Lemma~\ref{lem: P implies S}, $\alpha$ is a $\DMSLatS$-morphism. 
Because $P$ is proper, there is $x \in M_2 \setminus P$. Set $S = \alpha^{-1}(\down x)$. Then
$x \in \alpha[S]^u$. By (\ref{eqn: *}), there is $c \in S^u$ with $\alpha(c) \le x$. Consequently,
$\alpha(c) \notin P$, so $c \notin \alpha^{-1}(P)$, and hence $
\alpha^{-1}(P)$ is a proper filter. To see it is prime, suppose that $F,G$ are filters of 
$M_1$ with $F \cap G \subseteq \alpha^{-1}(P)$. 
Then $\up \alpha[F], \up \alpha[G]$ are filters of $M_2$.
We show that $\up \alpha[F] \cap \up 
\alpha[G] \subseteq P$. Let $x \in \up \alpha[F] \cap \up \alpha[G]$. Then $\alpha(a), 
\alpha(b) \le x$ for some $a\in F$ and $b\in G$. By (1), there is $c \in M_1$ with $a, b 
\le c$ and $\alpha(c) \le x$. Therefore, $c \in F \cap G$, so $\alpha(c) \in P$. This 
yields $x \in P$, as desired. Thus, since $P$ is prime, $\up \alpha[F] \subseteq P$ or $
\up \alpha[G] \subseteq P$, so $F \subseteq \alpha^{-1}(P)$ or 
$G \subseteq \alpha^{-1}(P)$. 
Consequently, $\alpha^{-1}(P)$ is a prime filter of $M_1$.

\ref{hansoul 2}$\Rightarrow$\ref{hansoul 3}. We first show that the pullback of a 
prime ideal is a prime ideal. If 
$I$ is a prime ideal of $M_2$, then $M_2 \setminus I$ is a prime filter (see, e.g., 
\cite[Prop.~2.3]{BJ11}). By \ref{hansoul 2}, $\alpha^{-1}(M_2 \setminus I)$ is a prime
filter of $M_1$. Since $\alpha^{-1}(M_2 \setminus I) = M_1 \setminus \alpha^{-1}(I)$, we 
conclude that $\alpha^{-1}(I)$ is a prime ideal of $M_1$. Finally, by the prime filter 
theorem for distributive meet-semilattices (see, e.g., \cite[p.~168]{Gra11} for the dual 
statement for distributive join-semilattices), each ideal is an intersection of prime ideals, 
and hence the pullback of an ideal is an ideal. 

\ref{hansoul 3}$\Rightarrow$\ref{hansoul 1}. Suppose that $S$ is a finite subset of 
$M_1$ and $x\in M_2$ with 
$\alpha(s) \le x$ for each $s\in S$. Since $\down x$ is an ideal of $M_2$, \ref{hansoul 3} 
implies that $\alpha^{-1}(\down x)$ is an ideal of $M_1$. Therefore, because 
$S \subseteq \alpha^{-1}(\down x)$, there is $c \in \alpha^{-1}(\down x)$ with 
$s \le c$ for each $s\in S$. Thus, $\alpha$ is a $\DMSLatP$-morphism.
\endproof

\begin{remark} \label{rem: optimal}
Let $\alpha \colon M_1 \to M_2$ be a $\DMSLatS$-morphism. An argument
similar to \cite[Lem.~9.7]{BJ08} shows that $\alpha$ is a $\DMSLatS$-morphism iff the $
\alpha$-preimage of an optimal filter is an optimal filter. On the other hand, 
Lemma~\ref{lem: hansoul} shows that $\alpha$ is a $\DMSLatP$-morphism iff 
the $\alpha$-preimage of a prime filter is a prime filter.
This shows how morphisms in $\DMSLatS$ and $\DMSLatP$ compare to 
each other in the language of prime and optimal filters. 
\end{remark}

\begin{remark} \label{Hansoul} 
Since distributive meet- and join-semilattices are order-duals of each other, 
it follows from
Lemma~\ref{lem: hansoul} that $\BDMSLatP$ is isomorphic to the category 
of distributive join-semilattices considered in \cite{HP08}. Thus, \cite[Thm.~1.12]{HP08} 
is a consequence of Theorem~\ref{thm: KAlgFrm} and \cite[Prop.~13.6]{BJ08}.
\end{remark}

\section{Priestley duality from the perspective of HMS duality} 
\label{sec: deriving Priestley}

In this section we show how Priestley duality fits in the general picture we 
developed in this paper.
We recall \cite[Sec.~II.3]{Joh82} that an algebraic frame $L$ is {\em coherent} if finite 
meets of compact elements are compact. Therefore, $L$ is coherent iff $\Kp(L)$ is a 
bounded sublattice of $L$. In particular, each coherent frame is compact. 

\begin{definition} \label{def: CohFrm}
Let $\CohFrm$ be the full subcategory of $\KAlgFrm$ consisting of coherent frames. 
\end{definition}

The restriction of the functor $\K$ to $\CohFrm$ lands in $\DLat$. Conversely, if a 
distributive meet-semilattice is a bounded lattice, then $\F(M)$ is a coherent frame 
because $\K\F(M)$ consists of principal upsets and $\up a \vee \up b = \up a \cap \up b = 
\up (a \vee b)$. Thus, $\K$ and $\F$ restrict to yield an equivalence of $\CohFrm$ and 
$\DLat$, and we arrive at the following well-known result, which is the pointfree version of 
Stone duality for distributive lattices:

\begin{theorem} \emph{\cite[p.~65]{Joh82}} \label{thm: coh=DL}
$\CohFrm$ is equivalent to $\DLat$.
\end{theorem}

\begin{remark}
Johnstone \cite{Joh82}, like Nachbin \cite{Nac49}, worked with the ideal functor 
rather than the filter functor. Also, Johnstone worked with the category $\CohLoc$ of 
coherent locales, the objects of which are the same as those of $\CohFrm$, but the 
morphisms of $\CohLoc$ are the right adjoints of morphisms in $\CohFrm$. 
\end{remark}
 
\begin{lemma} \label{lem: coherent frames}
\hfill
\begin{enumerate}[$(1)$]
\item \label{coherent frames 1} Let $L \in \AlgFrm$. If $L \in \CohFrm$, then 
$\PP(L)=\p(L)$.
\item \label{coherent frames 2} Let $X \in \GPries$. If $X = X_0$, then $\V(X)$ is the set of
closed upsets of $X$, and hence $\V(X) \in \CohFrm$.
\end{enumerate}
\end{lemma}

\proof
\ref{coherent frames 1}. 
Since $\p(L) \subseteq \PP(L)$, we only need to prove the other inclusion.
Let $p \in \PP(L)$ and $a, b \in L$ with $a \wedge b \le p$. If $a, b \not\le p$, then 
there are $k, l \in \Kp(L)$ with $k \le a$, $l \le b$, and $k, l \not\le p$. We have $k 
\wedge l \le a \wedge b \le p$. Since $L$ is coherent, $k \wedge l \in \Kp(L)$, so 
$k \wedge l \le p$ implies $k \wedge l \ll p$. Because $p \in \PP(L)$, either $k \le p$ or 
$l \le p$. The obtained contradiction proves that $p \in \p(L)$.

\ref{coherent frames 2}. 
Let $X_0 = X$. Then all clopen upsets and closed upsets are admissible. Therefore, $
\A(X)$ is all clopen upsets and $\V(X)$ is all closed upsets, and hence $\A(X)$ is a 
bounded sublattice of $\V(X)$.
Since $\K\V(X)=\A(X)$ by Lemma~\ref{lem: LX is a DAlgLat}, we conclude that $\V(X)$ 
is a coherent frame.
\endproof

Let $\langle X, \tau, \le \rangle$ be a Priestley space. Then $\langle X, \tau, \le , 
X\rangle$ is a generalized Priestley space. Moreover, a map between Priestley spaces is a 
Priestley morphism iff it is a $\GPriesP$-morphism between the corresponding 
generalized Priestley spaces. Thus, we may view $\Pries$ as a full subcategory of $
\GPriesP$.

\begin{theorem} \label{thm: coh = PS}
$\CohFrm$ is dually equivalent to $\Pries$.
\end{theorem}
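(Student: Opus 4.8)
The plan is to obtain the stated dual equivalence by restricting the dual equivalence between $\KAlgFrm$ and $\GPries_{\sf P}$ of Theorem~\ref{thm: KAlgFrm} — given by the functors $\V$ and $\Y$ — to the full subcategories $\CohFrm \subseteq \KAlgFrm$ and $\Pries \subseteq \GPries_{\sf P}$. Recall that $\CohFrm$ is by definition a full subcategory of $\KAlgFrm$, and that (by the discussion just before the statement) $\Pries$ is a full subcategory of $\GPries_{\sf P}$ via the identification $\langle X,\tau,\le\rangle \mapsto \langle X,\tau,\le,X\rangle$. So the whole content of the proof is to verify, at the object level, that $\V$ maps $\Pries$ into $\CohFrm$ and $\Y$ maps $\CohFrm$ into $\Pries$; everything else is formal.

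First I would check $\V$. If $X \in \Pries$, then $X = X_0$, so $\V(X) \in \CohFrm$ by Lemma~\ref{lem: coherent frames}(2). Next I would check $\Y$. If $L \in \CohFrm$, then $L$ is a coherent (hence compact) frame, so $\PP(L) = P(L)$ by Lemma~\ref{lem: coherent frames}(1). The version of $\Y$ relevant here — the modification for compact frames introduced after Corollary~\ref{cor: bounded}, which discards the isolated top $1$ of $Y_L$ — sends $L$ to the generalized Priestley space $\langle \PP(L),\tau,\le,P(L)\rangle$; since $\PP(L)=P(L)$, its underlying space equals its distinguished dense subset, i.e.\ $\Y(L)$ is (the image of) a Priestley space, an object of $\Pries$. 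On morphisms there is nothing to do, because between objects of $\Pries$, resp.\ of $\CohFrm$, the $\GPries_{\sf P}$-, resp.\ $\KAlgFrm$-, morphisms are exactly the $\Pries$-, resp.\ $\CohFrm$-, morphisms, and $\V$, $\Y$ act on them as before.

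With these two facts, the restricted functors $\V\colon \Pries \to \CohFrm$ and $\Y\colon \CohFrm \to \Pries$ are well defined, and the natural isomorphisms $1_{\GPries_{\sf P}} \cong \Y\circ\V$ and $1_{\KAlgFrm} \cong \V\circ\Y$ from Theorem~\ref{thm: KAlgFrm} restrict to natural isomorphisms $1_{\Pries} \cong \Y\circ\V$ and $1_{\CohFrm} \cong \V\circ\Y$: at each object the component is already an isomorphism whose source and target both lie in the relevant full subcategory, hence it is an isomorphism there. This yields the dual equivalence of $\CohFrm$ and $\Pries$. In passing, composing it with Theorem~\ref{thm: coh=DL} recovers Priestley duality (Theorem~\ref{thm: Priestley}).

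I do not expect a genuine obstacle: Lemma~\ref{lem: coherent frames} already carries the mathematical weight. The only point that needs care is the bookkeeping around the modified $\Y$ of Corollary~\ref{cor: bounded} — one must confirm that on a coherent frame it returns a space whose distinguished subset is the whole space, which is precisely where the identity $\PP(L)=P(L)$ is combined with the removal of the isolated top.
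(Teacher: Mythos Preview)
Your proposal is correct and matches the paper's proof: both restrict the dual equivalence of Theorem~\ref{thm: KAlgFrm} to the full subcategories $\CohFrm$ and $\Pries$, invoking Lemma~\ref{lem: coherent frames} to see that $\V$ and $\Y$ land in the right places. The paper's version is terser, but your expansion of the bookkeeping (especially around the modified $\Y$ for compact frames) is accurate and fills in exactly the details the paper leaves implicit.
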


\proof
We have that $\CohFrm$ is a full subcategory of $\KAlgFrm$ and we may view $\Pries$ 
as a full subcategory of $\GPriesP$. Therefore, by Lemma~\ref{lem: coherent 
frames} 
the dual equivalence between $\KAlgFrm$ and $\GPriesP$ of Theorem~\ref{thm: 
KAlgFrm} restricts to a dual equivalence between $\CohFrm$ and $\Pries$.
\endproof

Putting Theorems~\ref{thm: coh=DL} and~\ref{thm: coh = PS} together yields Priestley 
duality: 

\begin{theorem} \label{thm: Priestley duality}
$\CohFrm$ is equivalent to $\DLat$ and dually equivalent to $\Pries$. 
\end{theorem}

\begin{remark}
The functors establishing the duality of $\DLat$ and $\Pries$ are the compositions of 
the functors establishing the equivalence of $\DLat$ and $\CohFrm$ and 
the duality of $\CohFrm$ and $\Pries$. 
Indeed, if $M \in \DLat$, then the Priestley space $X_M$ 
of $M$ is equal to $\p(\F(M))$. If $\alpha$ is a $\DLat$-morphism, 
it follows from the proof of Theorem~\ref{thm: eq and dual eq} that
$\X(\alpha) = \Y \F(\alpha)$.
Therefore, $\X = \Y \circ \F$. In the opposite direction, if $X \in \Pries$, then 
$\ClopUp(X) = \A(X)$, which is $\Kp(\V(X))$ by Lemma~\ref{lem: LX is a DAlgLat}. 
Moreover, if $f$ is a $\Pries$-morphism, then $\V(f)=f^{-1}$ by 
Remark~\ref{rem: V and Y on S categories} and $\K \V(f)$ is the restriction of 
$f^{-1}$ to $\A(X)$ which is $\A(f)$. Thus, $\A = \K \circ \V$.
\end{remark}

We next show how to view the duality for distributive lattices with top 
but possibly without bottom from this perspective. This can be done by working with 
pointed Priestley spaces.

\subsection*{\textbf{Various morphisms between pointed Priestley spaces.}}

\begin{definition}\label{def: PPS}
Let $X = \langle X, \tau, \le, X_0, m\rangle \in \PGPries$. If $X_0 = X \setminus \{m\}$, 
then we call $X$ a {\em pointed Priestley space}. Let $\PPries$\label{PPries} be the full subcategory of 
$\PGPries$ consisting of pointed Priestley 
spaces, and define $\PPriesWS$, $\PPriesS$, and $\PPriesP$ similarly.
\end{definition}

We also introduce arithmetic frames in analogy with arithmetic lattices 
\cite[p.~117]{GHKLMS03}. 
Arithmetic frames are also known as M-frames (see, e.g., \cite[p.~2]{IM09}).

\begin{definition}
We call an algebraic frame $L$ {\em arithmetic} if 
\[
a,b \in \Kp(L) \Longrightarrow a \wedge b \in \Kp(L).
\]
\end{definition}

Note that coherent frames are simply compact arithmetic frames. 

\begin{definition} \label{def: ArFrm}
Let $\ArFrm$\label{ArFrm} be the full subcategory of $\AlgFrm$ consisting of 
arithmetic frames, and define $\ArFrmJ$, $\ArFrmWS$, and $\ArFrmS$ similarly.
\end{definition}

\begin{theorem} \label{thm: pointed Pries2}
$\ArFrmJ$ is dual to $\PPries$, $\ArFrmWS$ is dual to $\PPriesWS$, $\ArFrmS$ is dual to $\PPriesS$, 
and $\ArFrm$ is dual to $\PPriesP$. 
\end{theorem}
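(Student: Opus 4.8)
The plan is to derive all three dualities at once by restricting the duality of Theorem~\ref{thm: duality between DAlgLat and GPries*} (and its refinements in Theorems~\ref{thm: DAlgLatS} and~\ref{thm: AlgFrm and PGPSP}) to the subcategories determined by arithmeticity on the frame side and by the condition $X_0 = X \setminus \{m\}$ on the space side. Concretely, for the first statement I would show that for $L \in \AlgFrmJ$ one has $L \in \SAlgFrmJ$ if and only if $\Y(L) \in \PPries$, i.e. if and only if $P(\Y(L)) = Y_L \setminus \{1\}$; and dually that for $X \in \PGPries$ one has $X \in \PPries$ if and only if $\V(X) \in \SAlgFrmJ$. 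Since $\SAlgFrmJ$ is by definition a \emph{full} subcategory of $\AlgFrmJ$ and $\PPries$ is a full subcategory of $\PGPries$, establishing this object-level correspondence together with the fact that the natural isomorphisms $\R$ and $\eta$ of Propositions~\ref{prop: R is natural} and~\ref{prop: eta is natural} restrict, immediately yields the dual equivalence. The refinements to $\SAlgFrmS \leftrightarrow \PPries_{\sf S}$ and $\SAlgFrm \leftrightarrow \PPries_{\sf P}$ then follow by intersecting with the morphism restrictions already proved in Theorems~\ref{thm: DAlgLatS} and~\ref{thm: AlgFrm and PGPSP}, since those theorems show the relevant classes of morphisms correspond under $\V$ and $\Y$ regardless of the ambient objects.

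The first half of the object-level equivalence, namely that $L$ arithmetic forces $\Y(L) \in \PPries$, is a mild generalization of Lemma~\ref{lem: coherent frames}(1): if $a, b \in K(L)$ imply $a \wedge b \in K(L)$, then for $p \in PP(L)$ and $a \wedge b \le p$ with $a, b \not\le p$, picking compact $k \le a$, $l \le b$ with $k, l \not\le p$ gives $k \wedge l \in K(L)$ and $k \wedge l \le p$, whence $k \wedge l \ll p$, so $k \le p$ or $l \le p$, a contradiction; thus $PP(L) = P(L)$ and hence $\Y(L)$ has $P(\Y(L)) = Y_L \setminus \{1\}$, i.e. $\Y(L) \in \PPries$. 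Conversely, if $\Y(L) \in \PPries$, then by Lemma~\ref{lem: directed} (every element of $Y_L \setminus \{1\}$ has $\I_a$ nonempty and directed, so lies in $P(L)$) we get $PP(L) = P(L)$; now given $k, l \in K(L)$, if $k \wedge l \notin K(L)$ one argues that $k \wedge l \ll p$ fails to force $k \le p$ or $l \le p$ for a suitable prime $p$ produced by Lemma~\ref{lem: existence of primes}(1) separating $k \wedge l$ from some compact not below it, contradicting $PP(L) = P(L)$; this shows $L$ is arithmetic. For the other direction of the object correspondence, $X \in \PPries$ means $X_0 = X \setminus \{m\}$, and then, exactly as in the proof of Lemma~\ref{lem: coherent frames}(2), every closed upset and every clopen upset of $X$ is admissible, so $\A(X) = {\sf ClopUp}(X)$ is closed under finite unions, hence a sublattice of $\V(X)$, and since $K(\V(X)) = \A(X)$ by Lemma~\ref{lem: LX is a DAlgLat} this says $\V(X)$ is arithmetic; the converse uses the natural isomorphism $\R_X$ to transport arithmeticity of $\V(X)$ back to the condition $X_0 = X \setminus \{m\}$ via the characterization of $X_0$ in Lemma~\ref{lem: directed}.

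The main obstacle I anticipate is the converse direction ``$\V(X) \in \SAlgFrmJ \Rightarrow X \in \PPries$'' (and symmetrically ``$\Y(L) \in \PPries \Rightarrow L$ arithmetic''): one must rule out the existence of a point $x \in X \setminus (X_0 \cup \{m\})$, equivalently an element $a \in PP(L) \setminus P(L)$, purely from the closure of $\A(X)$ (resp.\ $K(L)$) under finite meets. The cleanest route is to use Lemma~\ref{lem: varepsilon is an iso} and the identification $K(\V(X)) = \A(X)$: if $\V(X)$ is arithmetic, then by the already-proved implication for frames, $P(\V(X)) = PP(\V(X)) = Y_{\V(X)} \setminus \{1\}$, and since $\varepsilon_X$ is an order-isomorphism sending $X_0$ onto $(Y_{\V(X)})_0 = P(\V(X))$ by Lemma~\ref{lem: varepsilon is an iso}(4), we get $X_0 = \varepsilon_X^{-1}(Y_{\V(X)} \setminus \{1\}) = X \setminus \{m\}$, so $X \in \PPries$. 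This reduces the hard converse on the space side to the (more tractable) frame-side statement, which in turn is handled by the Lemma~\ref{lem: existence of primes}(1) argument sketched above. Once the object correspondences are in place, assembling the three displayed dualities is routine bookkeeping with the functors $\V$, $\Y$ and the morphism-class identifications already established.
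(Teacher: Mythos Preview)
Your overall strategy is correct and matches the paper's: show that $\Y$ sends arithmetic frames into $\PPries$ and $\V$ sends pointed Priestley spaces into $\SAlgFrmJ$, then restrict the dualities of Theorems~\ref{thm: duality between DAlgLat and GPries*}, \ref{thm: DAlgLatS}, and \ref{thm: AlgFrm and PGPSP}. Your arguments for these two forward implications are essentially those of Lemma~\ref{lem: coherent frames}, exactly as the paper does.

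However, you are doing unnecessary extra work. To restrict a dual equivalence to full subcategories $\mathcal{C}'\subseteq\mathcal{C}$ and $\mathcal{D}'\subseteq\mathcal{D}$, it suffices that each functor carries one subcategory into the other; the natural isomorphisms then restrict automatically. You do \emph{not} need the biconditionals ``$L$ arithmetic $\Leftrightarrow \Y(L)\in\PPries$'' and ``$X\in\PPries\Leftrightarrow\V(X)$ arithmetic''. The paper accordingly proves only the two forward implications. Your direct sketch for the converse ``$PP(L)=P(L)\Rightarrow L$ arithmetic'' is in fact incomplete (the sentence ``one argues that $k\wedge l\ll p$ fails to force $k\le p$ or $l\le p$ for a suitable prime $p$'' does not explain how to manufacture a pseudoprime that is not prime from a non-compact $k\wedge l$), but this does not matter: once the two forward implications are in place, the converses follow for free via $\eta_L$ and $\varepsilon_X$, as you yourself observe for the space side. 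So simply drop the converse arguments and your proof is complete and coincides with the paper's.
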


\proof
Let $L$ be an arithmetic frame. Observe that the proof of  Lemma~\ref{lem: coherent 
frames}\ref{coherent frames 1} only uses that $L$ is arithmetic, so it yields that
$\Y(L)$ is a pointed Priestley space.
Next, let $X$ be a pointed Priestley space. Then $\A(X)$ is all nonempty clopen upsets and $\V(X)$ is
all nonempty closed upsets of $X$. Therefore, the same argument as in 
Lemma~\ref{lem: coherent 
frames}\ref{coherent frames 2} yields that $\V(X)$ is an arithmetic frame.  It is left to apply 
Theorems~\ref{thm: duality between DAlgLat and GPries*}, \ref{thm: DAlgLatS}, 
and~\ref{thm: AlgFrm and PGPSP}.
\endproof

\begin{definition} \label{def: DmLatM} 
Let $\DmLatM$, $\DmLat$, $\DmLatS$, and $\DmLatP$ be the full 
subcategories of $\DMSLat$, $\DMSLatWS$, $\DMSLatS$, and $\DMSLatP$, respectively, 
whose objects are lattices.
\end{definition}

\begin{remark}
Objects in each of $\DmLatM$, $\DLat^-$, $\DmLatS$, and $\DmLatP$ are distributive 
lattices with top, but possibly without bottom.
Morphisms of $\DmLatM$ are meet-semilattice homomorphisms,
morphisms of $\DmLat$ are lattice homomorphisms, morphisms of $\DmLatS$ are lattice
homomorphisms which preserve bottom when it exists, and morphisms of $\DmLatP$ are lattice 
homomorphisms which pull prime filters back to prime filters. 

We point out that not every $\DmLatS$-morphism is a $\DmLatP$-morphism. For example,
let $M$ be a decreasing chain with top but no bottom, and let $\alpha \colon M \to M$ be defined by 
$\alpha(a) = 1$ for each $a \in M$. Then $\alpha$ is a $\DmLatS$-morphism but not a $\DmLatP$-morphism.
\end{remark}

For an algebraic frame $L$, we have that $L$ is arithmetic iff 
$\K(L)$ is a distributive lattice (possibly without bottom), which happens iff $\Y(L)$ is a pointed Priestley space.
Therefore, putting Theorems~\ref{thm: KDAlgLatS}, \ref{thm: DMSP}, and \ref{thm: pointed Pries2} together yields: 

\begin{theorem} \label{thm: detailed pointed Pries}
\hfill
\begin{enumerate}[$(1)$]
\item $\ArFrmJ$ is equivalent to $\DmLatM$ and dually equivalent to $
\PPries$.
\item $\ArFrmWS$ is equivalent to $\DmLat$ and dually equivalent to $\PPriesWS
$.
\item $\ArFrmS$ is equivalent to $\DmLatS$ and dually equivalent to $\PPriesS
$.
\item $\ArFrm$ is equivalent to $\DmLatP$ and dually equivalent to $
\PPriesP$.  
\end{enumerate}
\end{theorem}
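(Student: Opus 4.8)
The plan is to assemble Theorem~\ref{thm: detailed pointed Pries} purely by combining previously established results, with no new arguments needed beyond bookkeeping. The three dual equivalences on the right-hand sides ($\SAlgFrmJ \sim \PPries$, $\SAlgFrmS \sim \PPries_{\sf S}$, $\SAlgFrm \sim \PPries_{\sf P}$) are exactly the content of Theorem~\ref{thm: pointed Pries2}, so part of the work is already done. What remains is to identify the semilattice-side categories. For this I would first observe that an object of $\DMSLat$ is a lattice (i.e.\ has all finite joins) precisely when $\K\F(M) = \{\up a \mid a \in M\}$ is closed under finite joins in $\F(M)$, equivalently when $\F(M)$ is an arithmetic frame: indeed $\up a \vee \up b$ (join in $\F(M)$, which is intersection of filters) equals $\up(a\wedge b)$ always, but the relevant condition is that the \emph{meet} in $\F(M)$, dually the join $a\vee b$ in $M$, of two compact elements be compact, which is exactly arithmeticity since $K(\F(M)) = \K(M)$ ordered dually. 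Thus under the equivalence $\F : \DMSLat \to \AlgFrmJ$ of Corollary~\ref{cor: DMSLat = DAlgLat}, the full subcategory $\DLat^-_{\sf M}$ of lattices corresponds precisely to the full subcategory $\SAlgFrmJ$ of arithmetic frames. This is the object-level core of the theorem.

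Next I would check that the three choices of morphisms match up correctly under $\F$ and $\K$. For $\DLat^-$ versus $\DLat^-_{\sf M}$: a meet-preserving map between lattices is a lattice homomorphism iff it also preserves finite joins, and on objects of $\DMSLat_{\sf S}$ the morphisms are exactly the sup-homomorphisms, so $\DLat^-$ is by definition the intersection of $\DMSLat_{\sf S}$ with $\DLat^-_{\sf M}$. Under $\F$, Theorem~\ref{thm: DMSLatS} identifies $\DMSLat_{\sf S}$ with $\AlgFrmS$, so the lattice objects with sup-homomorphisms correspond to arithmetic frames with $\AlgFrmS$-morphisms, i.e.\ $\SAlgFrmS$. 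Similarly, for $\DLat^-_{\sf P}$: its morphisms are lattice homomorphisms pulling prime filters back to prime filters, which by Lemma~\ref{lem: hansoul} (or rather the analogous $\DMSLat$-version indicated after its statement and the definition of $\DMSLat_{\sf P}$) are exactly the $\DMSLat_{\sf P}$-morphisms between lattices; under $\F$, Theorem~\ref{thm: DMSP} identifies $\DMSLat_{\sf P}$ with $\AlgFrm$, so these correspond to frame homomorphisms between arithmetic frames, i.e.\ $\SAlgFrm$-morphisms. Hence $\F$ and $\K$ restrict to equivalences $\DLat^-_{\sf M}\sim\SAlgFrmJ$, $\DLat^-\sim\SAlgFrmS$, and $\DLat^-_{\sf P}\sim\SAlgFrm$.

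Finally I would splice these equivalences together with the dual equivalences of Theorem~\ref{thm: pointed Pries2}, exactly as in the proofs of Theorems~\ref{thm: DMSP} and~\ref{thm: KAlgFrm}: composing $\F$ (or $\K$) with $\Y$ (or $\V$) gives the desired dual equivalences $\X : \DLat^-_{\sf M}\to\PPries$, etc., and by the commuting-triangle argument of Theorem~\ref{thm: eq and dual eq} these compositions are the expected functors. Each of the three items then reads off directly. So the body of the proof is short: cite Corollary~\ref{cor: DMSLat = DAlgLat}, Theorems~\ref{thm: DMSLatS},~\ref{thm: DMSP},~\ref{thm: pointed Pries2}, verify the three category identifications, and conclude.

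The main obstacle — and really the only point requiring care — is the object-level identification that $M \in \DMSLat$ is a lattice iff $\F(M)$ is arithmetic, together with checking that the morphism classes genuinely coincide rather than merely being comparable. The subtlety is that ``$M$ has finite joins'' must be translated through the dual ordering on $K(\F(M)) = \K(M)$: the bottom of $M$ corresponds to the top of $\K(M)$, which exists automatically, so the only constraint is existence of binary joins, matching the definition of arithmetic (binary meets of compact elements compact, not necessarily the top). One must be careful that $\DLat^-_{\sf M}$ consists of lattices \emph{with top but possibly without bottom}, so the corresponding frames are arithmetic but not necessarily compact — which is precisely why $\SAlgFrmJ$ rather than $\CohFrm$ appears. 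Everything else is a routine assembly of named results.
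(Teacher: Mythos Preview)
Your proposal is correct and follows the paper's approach exactly: the paper's proof is the single sentence ``Putting Corollary~\ref{cor: DMSLat = DAlgLat} and Theorems~\ref{thm: DMSLatS},~\ref{thm: DMSP}, and~\ref{thm: pointed Pries2} together yields'', and your write-up simply unpacks why these citations suffice. One tiny slip: the join in $\F(M)$ is the filter generated by the union, not the intersection (intersection is the \emph{meet}), though your equation $\up a \vee \up b = \up(a\wedge b)$ and the subsequent argument about arithmeticity are correct.
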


\subsection*{\textbf{Various morphisms between Priestley spaces.}}

\begin{definition}\label{def: PS}
\begin{enumerate}[$(1)$]
\item Let $\PriesR$ be the category of Priestley spaces with generalized Priestley morphisms.
\item Let $\PriesPS$ be the category of Priestley spaces with partial strong Priestley morphisms.
\end{enumerate}
\end{definition}

\begin{remark} \label{rem: PS inside PPS}
\begin{enumerate}[$(1)$]
\item \label{PriesR and PriesST} By restricting the
equivalence of Proposition~\ref{prop: GPriesPS = subcategory of PGPriesWS}, 
we obtain that $\PriesR$ is equivalent to the full subcategory of $\PPries$
and $\PriesPS$ to the full subcategory of $\PPriesWS$ consisting of
Priestley spaces with isolated maxima.
\item \label{morphisms of PriesR and PriesST}
The full subcategory of $\PPriesS$ consisting of pointed Priestley spaces with isolated maxima 
is equivalent to $\Pries$. To see this, if $f \colon X \to Y$ is a
$\PPriesS$-morphism, then $f^{-1}(n) = \{m\}$. Therefore, its restriction $f^- \colon X^- \to Y^-$
is a $\Pries$-morphism. 
Such morphisms automatically satisfy $f[X_0] \subseteq Y_0$ since $X_0 = X \setminus \{m\}$ 
and $Y_0 = Y \setminus \{n\}$, so are $\PPriesP$-morphisms. Consequently, $\Pries$ is 
also equivalent to the full subcategory of $\PPriesP$ consisting of pointed Priestley spaces with isolated maxima.
\end{enumerate}
\end{remark}

\begin{definition} \label{def: CohFrmJ}\label{def: DLatM} 
\hfill
\begin{enumerate}[$(1)$]
\item \label{CohFrmJ} Let $\CohFrmJ$ and $\CohFrmWS$ be the full subcategories 
of $\ArFrmJ$ and $\ArFrmWS$, respectively, consisting of coherent frames. 
\item  \label{DLatM} Let $\DLatM$ and $\DLatL$ be the full subcategories of $\BDMSLat$ and 
$\BDMSLatWS$, respectively, whose objects are lattices.
\end{enumerate}
\end{definition}

\begin{remark} \label{rem: morphisms of DLatM}
Objects of $\DLatM$ and $\DLatL$ are 
bounded distributive lattices.
Morphisms of $\DLatM$ are meet-semilattice homomorphisms and
morphisms of $\DLatL$ are lattice homomorphisms (not necessarily preserving bottom).  
Observe that 
the full subcategories of $\BDMSLatS$ and $\BDMSLatP$ consisting of lattices
are both equal to $\DLat$. To see this, 
it is clear that morphisms of $\BDMSLatS$ preserve finite joins and 0, so are $\DLat$-morphisms. 
Since $\BDMSLatP$ is a wide subcategory of $\BDMSLatS$, we also obtain that
$\BDMSLatP$-morphisms are $\DLat$-morphisms.
Similarly, the full subcategory of $\ArFrmS$ consisting of coherent frames 
is equal to $\CohFrm$.
\end{remark}

An arithmetic frame $L$ is coherent iff 
$\K(L)$ is a bounded distributive lattice, which happens iff $\Y(L)$ is a pointed Priestley space with
isolated top.
Therefore, Theorem~\ref{thm: detailed pointed Pries} and Remark~\ref{rem: PS inside PPS} yield: 

\begin{theorem} \label{thm: detailed pointed Pries 2}
\hfill
\begin{enumerate}[$(1)$]
\item $\CohFrmJ$ is equivalent to $\DLatM$ and dually equivalent to $
\PriesR$. 
\item $\CohFrmWS$ is equivalent to $\DLatL$ and dually equivalent to $\PriesPS
$.
\end{enumerate}
\end{theorem}

\begin{remark} \label{rem: CLP}
The dual equivalence between $\DLatM$ and $\PriesR$ was first established in 
\cite{CLP91}, but the authors worked with join-preserving rather than meet-preserving 
maps between bounded distributive lattices.
\end{remark}

Putting Theorems~\ref{thm: Priestley duality}, \ref{thm: detailed pointed Pries}, and \ref{thm: detailed pointed Pries 2} 
together yields Figure~\ref{fig: second diagram}.
The tables after the figure describe the listed categories. 

\begin{figure}[H] 
\[
\begin{tikzcd}[row sep = \rowsep, column sep = 1.0pc]
& \PPries \arrow[rr, \arr, shift left = .6ex, "\V"]  \arrow[from=dd, tail] && 
\ArFrmJ \arrow[ll, \arr, shift left = .6ex, "\Y"] \arrow[rr, shift left = .6ex, "\K"] 
\arrow[from=dd, tail] && \DmLatM \arrow[ll, shift left = .6ex, "\F"] \\
\PriesR \arrow[ru, hookrightarrow]  \arrow[rr, \arr, shift left = .6ex, crossing over] 
&& \CohFrmJ \arrow[ll, \arr, shift left = .6ex, crossing over] \arrow[ru, tail] 
\arrow[rr, shift left = .6ex, crossing over]  && \DLatM \arrow[ru, tail]  
\arrow[ll, shift left = .6ex, crossing over] \\
& \PPriesWS \arrow[rr, \arr, shift left = .6ex]  \arrow[from=dd, tail] && \ArFrmWS 
\arrow[ll, \arr, shift left = .6ex] \arrow[rr, shift left = .6ex] \arrow[from=dd, tail] && 
\DmLat \arrow[ll, shift left = .6ex] \arrow[uu, tail] \\
\PriesPS \arrow[rr, \arr, shift left = .6ex, crossing over] 
\arrow[ru, hookrightarrow] \arrow[uu, tail]
&& \CohFrmWS \arrow[ll, \arr, shift left = .6ex, crossing over] \arrow[ru, tail]  
\arrow[uu, crossing over, tail] \arrow[rr, shift left = .6ex, crossing over]  && 
\DLatL \arrow[ll, shift left = .6ex, crossing over] \arrow[ru, tail] 
\arrow[uu, crossing over, tail]\\
& \PPriesS \arrow[rr, \arr, shift left = .6ex]  \arrow[from=dd, tail] && \ArFrmS 
\arrow[ll, \arr, shift left = .6ex] \arrow[rr, shift left = .6ex] \arrow[from=dd, tail] && 
\DmLatS \arrow[ll, shift left = .6ex] \arrow[uu, tail] \\
\Pries \arrow[rr, \arr, shift left = .6ex, crossing over] 
\arrow[ru, hookrightarrow] \arrow[uu, tail]
&& \CohFrm \arrow[ll, \arr, shift left = .6ex, crossing over] \arrow[ru, tail]  
\arrow[uu, crossing over, tail] \arrow[rr, shift left = .6ex, crossing over]  && 
\DLat \arrow[ll, shift left = .6ex, crossing over] \arrow[ru, tail] 
\arrow[uu, crossing over, tail]\\
& \PPriesP \arrow[rr, \arr, shift left = .6ex] && \ArFrm \arrow[ll, \arr, shift left = .6ex] 
\arrow[rr, shift left = .6ex] &&  \DmLatP \arrow[ll, shift left = .6ex] \arrow[uu, tail] \\
\Pries \arrow[rr, shift left = .6ex] \arrow[uu, equal] \arrow[ru, hookrightarrow] 
&& \arrow[ll, shift left = .6ex] \CohFrm \arrow[rr, shift left = .6ex] 
\arrow[uu, equal, crossing over] \arrow[ru, tail] && \DLat \arrow[ll, shift left = .6ex] 
\arrow[uu, equal, crossing over] \arrow[ru, tail]
\end{tikzcd}
\]
\caption{Connecting Priestley duality and HMS duality}\label{fig: second diagram}
\end{figure}

\begin{center}
\begin{tabular}{|p{\widthfive}p{\widthseven}p{\widtheight}|} \hline
\multicolumn{3}{|c|}{\textbf{Categories of pointed Priestley spaces}} \\ \hline
\textbf{Category} & \textbf{Morphisms} & \textbf{Location}\\ 
\hline
$\PPries$ & $\PGPries$-morphisms & Def.~\ref{def: PPS} \\ 
$\PPriesWS$  & $\PGPriesWS$-morphisms & \textquotedbl \\ 
$\PPriesS$  & $\PGPriesS$-morphisms & \textquotedbl \\ 
$\PPriesP$  & $\PGPriesP$-morphisms & \textquotedbl \\ 
\hline
\end{tabular}
\end{center}

\begin{center}
\begin{tabular}{|p{\widthfive}p{\widthseven}p{\widtheight}|} \hline
\multicolumn{3}{|c|}{\textbf{Categories of Priestley spaces}} \\ \hline
\textbf{Category} & \textbf{Morphisms} & \textbf{Location}\\ 
\hline
$\PriesR$ &  $\GPries$-morphisms & Def.~\ref{def: PS} \\
$\PriesPS$  &  $\GPriesS$-morphisms & \textquotedbl \\
$\Pries$  & $\Pries$-morphisms & Def.~\ref{def: DL and PS}\ref{PS}\\
\hline
\end{tabular}
\end{center}

\begin{center}
\begin{tabular}{|p{\widthfive}p{\widthseven}p{\widtheight}|} \hline
\multicolumn{3}{|c|}{\textbf{Categories of arithmetic frames}} \\ \hline
\textbf{Category} & \textbf{Morphisms} & \textbf{Location}\\ 
\hline
$\ArFrmJ$  & $\AlgFrmJ$-morphisms & Def.~\ref{def: ArFrm} \\
$\ArFrmWS$  & $\AlgFrmWS$-morphisms & \textquotedbl \\
$\ArFrmS$  & $\AlgFrmS$-morphisms & \textquotedbl \\
$\ArFrm$  & $\AlgFrm$-morphisms & \textquotedbl \\
\hline
\end{tabular}
\end{center}

\begin{center}
\begin{tabular}{|p{\widthfive}p{\widthseven}p{\widtheight}|} \hline
\multicolumn{3}{|c|}{\textbf{Categories of coherent frames}} \\ \hline
\textbf{Category}  & \textbf{Morphisms} & \textbf{Location}\\ 
\hline
$\CohFrmJ$  &  $\AlgFrmJ$-morphisms &  Def.~\ref{def: CohFrmJ}\ref{CohFrmJ} \\
$\CohFrmWS$  &  $\AlgFrmWS$-morphisms & \textquotedbl  \\
$\CohFrm$ & $\AlgFrm$-morphisms & Def.~\ref{def: CohFrm} \\
\hline
\end{tabular}
\end{center}

\begin{center}
\begin{tabular}{|p{\widthfive}p{\widthseven}p{\widtheight}|} \hline
\multicolumn{3}{|c|}{\textbf{Categories of distributive lattices}} \\ \hline
\textbf{Category} & \textbf{Morphisms} & \textbf{Location}\\ 
\hline
$\DmLatM$ & $\DMSLat$-morphisms & Def.~\ref{def: DmLatM} \\
$\DmLat$  & $\DMSLatWS$-morphisms & \textquotedbl \\
$\DmLatS$  & $\DMSLatS$-morphisms & \textquotedbl \\
$\DmLatP$  & $\DMSLatP$-morphisms & \textquotedbl \\
\hline
\end{tabular}
\end{center}

\begin{center}
\begin{tabular}{|p{\widthfive}p{\widthseven}p{\widtheight}|} \hline
\multicolumn{3}{|c|}{\textbf{Categories of bounded distributive lattices}} \\ \hline
\textbf{Category} & \textbf{Morphisms} & \textbf{Location}\\ 
\hline
$\DLatM$  & $\DMSLat$-morphisms & 
Def.~\ref{def: DLatM}\ref{DLatM} \\
$\DLatL$  & $\DMSLatWS$-morphisms & \textquotedbl \\
$\DLat$ & $\DLat$-morphisms & Def.~\ref{def: DL and PS}\ref{DL} \\
\hline
\end{tabular}
\end{center}

\section{Stone duality for generalized boolean algebras via HMS duality} 
\label{sec: deriving Stone}

We conclude the paper by discussing how to view Stone duality for boolean algebras and 
generalized boolean algebras from this perspective. 

\begin{definition} \label{def: BA and Stone}
\hfill
\begin{enumerate}[$(1)$]
\item Let $\BA$\label{BA} be the category of boolean algebras and boolean homomorphisms. 
\item Let $\Stone$\label{Stone} be the category of Stone 
spaces and continuous maps. 
\end{enumerate}
\end{definition}

We may view $\BA$ as a full subcategory of $\DLat$ and $\Stone$ as a full subcategory of $\Pries$. 
For a frame $L$, we recall that the {\em pseudocomplement} of $a\in L$ is 
\[
a^* = \bigvee \{ x \in L \mid a \wedge x = 0 \}
\] 
and that $a$ is {\em complemented} if $a\vee a^*=1$. Then $L$ is 
{\em zero-dimensional} if the complemented elements are join-dense in $L$. 

\begin{definition} \emph{\cite{Ban89}} \label{def: Stone frame}
A {\em Stone frame} is a compact zero-dimensional frame. 
Let $\StoneFrm$ be the 
category of Stone frames and frame homomorphisms. 
\end{definition}

Since in Stone frames compact elements are exactly 
complemented elements, every Stone frame is coherent, and every frame homomorphism 
between Stone frames preserves compact elements. Thus, $\StoneFrm$ is a full 
subcategory of $\CohFrm$, and we obtain Stone duality for boolean algebras as 
a consequence of Theorem~\ref{thm: Priestley duality}:

\begin{theorem} \emph{\cite{Sto36,Ban89}} \label{thm: Stone duality}
$\BA$ is equivalent to $\StoneFrm$ and dually equivalent to $\Stone$.
\end{theorem}

\proof
It is well known 
that $B\in\BA$ implies $\F(B)\in\StoneFrm$ and $L\in\StoneFrm$ implies $\K(L)
\in\BA$. Thus, the equivalence between $\DLat$ and $\CohFrm$ (see 
Theorem~\ref{thm: coh=DL}) restricts to an equivalence between $\BA$ and $
\StoneFrm$.

We next show that the dual equivalence between $\CohFrm$ and $\Pries$ (see 
Theorem~\ref{thm: coh = PS}) restricts to a dual equivalence between $\StoneFrm$ and 
$\Stone$. For this it is enough to observe that from $L\in\StoneFrm$ it follows that the 
order on $\p(L)$ is equality, and that $X$ a Stone space implies $\V(X)$ is a Stone frame. 
For the latter, since $\K\V(X)=\A(X)=\Clop(X)$, we see that $\V(X)$ is 
zero-dimensional, hence a Stone frame. For the former, let $L$ be a Stone frame 
and $p, q \in \p(L)$ with $p < q$. Then $q \not\le p$, so there is $k \in \Kp(L)$ with $k \le 
q$ and $k \not\le p$. The latter together with $k \wedge k^*\le p$ implies that $k^* \le 
p \le q$. Therefore, $k \vee k^* \le q$. Since $L$ is a Stone frame, $k$ is 
complemented, so $k \vee k^* = 1$. Thus, $q = 1$, a contradiction. Consequently, the 
order on $\p(L)$ is equality.
\endproof

\subsection*{\textbf{Various morphisms between generalized boolean algebras.}}

We recall that a generalized boolean algebra is a distributive lattice $M$ with bottom 
such that $[0,a]$ is a boolean algebra for each $a\in M$. Since we work with distributive 
lattices with top, we consider the order-dual of $M$. Therefore, by a \emph{generalized 
boolean algebra} $M$ we mean a distributive lattice $M$ with top such that $[a,1]$ is a 
boolean algebra for each $a\in M$. 

\begin{definition} \label{def: GBA}
Let $\GBAM$, $\GBA$, $\GBAS$, and $\GBAP$ be the full subcategories of $
\DmLatM$, $\DmLat$, $\DmLatS$, and $\DmLatP$, respectively, whose objects are 
generalized boolean algebras. 
\end{definition}

\begin{remark}
Objects in each of $\GBAM$, $\GBA$, $\GBAS$, and $\GBAP$ are generalized boolean algebras.
Morphisms of $\GBAM$ are meet-semilattice homomorphisms and
morphisms of $\GBA$ are lattice homomorphisms (not necessarily preserving bottom). 
Morphisms of $\GBAS$ are lattice homomorphisms that preserve bottom when it exists and
morphisms of $\GBAP$ are lattice homomorphisms which pull prime filters back to prime filters. 
\end{remark}

We next generalize Stone frames as follows (see, e.g., \cite{BK23}).

\begin{definition} 
A {\em locally Stone frame} is an algebraic zero-dimensional frame. 
\end{definition}

\begin{remark}
It is easy to see that a frame $L$ is a locally Stone frame iff compact 
complemented elements are join-dense in $L$.
\end{remark}

Clearly Stone frames are compact locally Stone frames. It is also straightforward to 
see that each locally Stone frame is an arithmetic frame. 

\begin{definition} \label{def: LStoneFrm}
Let $\LStoneFrmJ$, $\LStoneFrmWS$, $\LStoneFrmS$, and $\LStoneFrm$ be the full subcategories of $
\ArFrmJ$, $\ArFrmWS$, $\ArFrmS$, and $\ArFrm$, respectively,  whose objects are locally 
Stone frames.
\end{definition}

We next define pointed Stone spaces as special pointed Priestley spaces.

\begin{definition} \label{def: pointed Stone}
A {\em pointed Stone space} is a pointed Priestley space $(X, m)$ such that $\le$ 
restricts to the identity on $X^-$. 
\end{definition}

\begin{remark}
A standard definition of a pointed Stone space is that it is a Stone space $X$ with a designated point $m\in X$ 
(see Remark~\ref{rem: pStone}). The definition above is different in that we make $m$ the maximum of $X$. 
This definition fits nicer in the more
general picture of pointed Priestley spaces developed in this paper. It also makes sense from the perspective 
of Stone duality for generalized
boolean algebras. Indeed, if we order the dual space $X_M$ of a generalized boolean algebra $M$ by inclusion, 
then $M$ is the maximum of $X_M$.
\end{remark}

\begin{definition} \label{def: PStone}
Let $\PStone$\label{PStone}  be the full subcategory of $\PPries$ whose objects are 
pointed Stone spaces, and define $\PStoneWS$, $\PStoneS$, and $\PStoneP$ similarly.
\end{definition}

\begin{remark} \label{rem: PStone objects and morphisms}
Objects of $\PStone$, $\PStoneWS$, $\PStoneS$, and $\PStoneP$ are pointed Stone spaces.
Morphisms of $\PStone$ are $\PPries$-morphisms and hence are relations.
Morphisms of $\PStoneWS$ are strong Priestley morphisms. 
If $(X, m)$ is a pointed Stone space, then $U \in \A(X)$ iff $U$ is a clopen subset of $X$ containing
$m$. Consequently, a $\PStoneWS$-morphism $f \colon (X, m) \to (Y, n)$ is a continuous 
function with $f(m) = n$. A $\PStoneWS$-morphism $f$ is a $\PStoneS$-morphism provided
$f^{-1}(n) = \{m\}$ when $m, n$ are isolated. In this case $f$ restricts to a
continuous function from $X^-$ to $Y^-$. Thus, 
$\Stone$ is equivalent to the full subcategory of $\PStoneS$ consisting of 
pointed Stone spaces with isolated top, as well as 
to the corresponding full subcategory of $\PStoneP$.
\end{remark}

An arithmetic frame $L$ is locally Stone iff $\K(L)$ is a generalized Boolean algebra, which happens 
iff $\Y(L)$ is a pointed Stone space with an isolated maximum. Therefore, 
Theorem~\ref{thm: detailed pointed Pries} and Remark~\ref{rem: PStone objects and morphisms} yield:

\begin{theorem} \label{thm: pointed Stone}
\hfill
\begin{enumerate}[$(1)$]
\item $\LStoneFrmJ$ is equivalent to $\GBAM$ and dually equivalent to $\PStone$.
\item $\LStoneFrmWS$ is equivalent to $\GBA$ and dually equivalent to $\PStoneWS$.
\item $\LStoneFrmS$ is equivalent to $\GBAS$ and dually equivalent to $\PStoneS$.
\item $\LStoneFrm$ is equivalent to $\GBAP$ and dually equivalent to $\PStoneP$.  
\end{enumerate}
\end{theorem}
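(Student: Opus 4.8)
The plan is to obtain Theorem~\ref{thm: pointed Stone} by restricting Theorem~\ref{thm: detailed pointed Pries}, in complete analogy with the way Theorem~\ref{thm: Stone duality} was deduced from the coherent/Priestley picture. By definition $\GStoneFrmJ,\GStoneFrmS,\GStoneFrm$ are full subcategories of $\SAlgFrmJ,\SAlgFrmS,\SAlgFrm$; likewise $\GBA_{\sf M},\GBA,\GBA_{\sf P}$ are full subcategories of $\DLat^-_{\sf M},\DLat^-,\DLat^-_{\sf P}$ and $\PStone,\PStone_{\sf S},\PStone_{\sf P}$ of $\PPries,\PPries_{\sf S},\PPries_{\sf P}$. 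Since the three categories in each row of Theorem~\ref{thm: detailed pointed Pries} are linked by $\F,\K,\V,\Y$, it is enough to check that these functors carry objects of the relevant full subcategories into the corresponding ones; the matching of the three morphism layers is then inherited from Theorem~\ref{thm: detailed pointed Pries} (and recall that generalized Stone frames are arithmetic, so they are objects of all three of $\GStoneFrmJ,\GStoneFrmS,\GStoneFrm$ simultaneously, and the same for the other two columns). Thus it suffices to establish: (a) $M\in\GBA$ implies $\F(M)=\Filt(M)$ is a generalized Stone frame; (b) a generalized Stone frame $L$ implies $\K(L)$ is a generalized boolean algebra; (c) a generalized Stone frame $L$ implies $\Y(L)$ is a pointed Stone space; (d) a pointed Stone space $X$ implies $\V(X)$ is a generalized Stone frame.

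For (a), recall that a frame is a generalized Stone frame iff its compact complemented elements are join-dense, that the compact elements of $\Filt(M)$ are exactly the principal filters $\up a$, and that these are automatically join-dense. So it is enough to show that each $\up a$ is complemented, and I claim that its complement is the filter $F_a=\{x\in M\mid a\vee x=1\}$ (this is a filter since $M$ is distributive). One checks at once that $\up a\cap F_a=\{1\}$, the bottom of $\Filt(M)$; and for the join, given $b\in M$ one passes to the boolean interval $[a\wedge b,1]$: the complement $a^*$ of $a$ there lies in $F_a$ and $a\wedge a^*=a\wedge b\le b$, so $b$ belongs to the filter generated by $\up a\cup F_a$, whence $\up a\vee F_a=M$. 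This is the analogue of the well-known fact that $\F(B)\in\StoneFrm$ for $B\in\BA$. Note that $\Filt(M)$ need not be compact, consistent with $\GStoneFrm$-frames not being required compact.

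For (d), the compact elements of $\V(X)$ are the admissible clopen upsets $\A(X)$ and they are join-dense, so again it suffices to complement each $U\in\A(X)$. Since $X$ is a pointed Stone space, $X_0=X\setminus\{m\}$ and $\le$ restricts to equality on $X\setminus\{m\}$, which makes $C:=(X\setminus U)\cup\{m\}$ an admissible closed upset; a direct computation in $\V(X)$ (where joins are intersections and the top is $\{m\}$) gives $U\vee C=\{m\}$ and $U\wedge C=X$, so $U$ is complemented. This is the analogue of ``$X$ Stone implies $\V(X)\in\StoneFrm$''. For (c), a generalized Stone frame is arithmetic, so $\PP(L)=P(L)$ by the argument of Lemma~\ref{lem: coherent frames}(1) (as already used in Theorem~\ref{thm: pointed Pries2}), whence $\Y(L)$ is at least a pointed Priestley space; and if $p<q$ in $P(L)$, one picks a compact $k$ with $k\le q$ and $k\not\le p$, notes that $k$ is complemented (being a finite join of compact complemented elements), and then primeness of $p$ forces $k^*\le p\le q$, so $q=k\vee k^*=1$, a contradiction --- precisely the computation used for Theorem~\ref{thm: Stone duality}. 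Hence the order on $P(L)$ is equality and $\Y(L)$ is a pointed Stone space. Finally, (b) follows formally: since $\A=\K\circ\V$ (Theorem~\ref{thm: eq and dual eq}) and $\V\circ\Y$ is naturally isomorphic to the identity (Proposition~\ref{prop: eta is natural}), we have $\K(L)\cong\A(\Y(L))$, and by (c) the space $\Y(L)$ is pointed Stone; so it remains only to observe directly that the admissible clopen upsets of a pointed Stone space $X$ form a generalized boolean algebra: $\A(X)$ is a distributive lattice with top $X$, and for $U\subseteq V$ in $\A(X)$ the clopen upset $U\cup(X\setminus V)$ is the complement of $V$ in the interval $[U,X]$.

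Putting (a)--(d) together, each of the three rows of Theorem~\ref{thm: detailed pointed Pries} restricts to the matched triple of full subcategories, yielding items (1)--(3). I expect the only real content to be (a) and (d): one must produce the right complement of each compact element of $\Filt(M)$ and of $\V(X)$. Everything else --- in particular (b) and (c) --- is bookkeeping, imported almost verbatim from the proof of Theorem~\ref{thm: Stone duality}.
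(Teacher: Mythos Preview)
Your proposal is correct and follows essentially the same approach as the paper, which simply states the theorem ``as a consequence of Theorem~\ref{thm: detailed pointed Pries} and the proof of Theorem~\ref{thm: Stone duality}'' without spelling out details. You have carefully filled in exactly the verifications that this one-line proof presupposes: that $\F,\K,\V,\Y$ carry the relevant full subcategories into one another, with the key computations in (a) and (d) and the order-triviality argument in (c) mirroring the corresponding steps in the proof of Theorem~\ref{thm: Stone duality}.
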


\subsection*{\textbf{Various morphisms between boolean algebras.}}

\begin{definition} \label{def: BA}\label{def: StoneFrmJ}\label{def: StoneR}
\hfill
\begin{enumerate}[$(1)$]
\item \label{BAM}Let $\BAM$ and $\BAL$ be the full subcategories of $
\DLatM$ and $\DLatL$, respectively, whose objects are 
boolean algebras.
\item \label{StoneFrmJ} Let $\StoneFrmJ$ and $\StoneFrmWS$ be the full subcategories of $\ArFrmJ$ and 
$\ArFrmWS$, respectively, whose objects are Stone frames.
\item \label{StoneR} Let $\StoneR$ and $\StonePS$
be the full subcategories of $\PriesR$ and $\PriesPS$, respectively, whose objects 
are Stone spaces.

\end{enumerate}
\end{definition}

\begin{remark}
\begin{enumerate}[$(1)$]
\item Objects in both $\BAM$ and $\BAL$ are  boolean algebras.
Morphisms of $\BAM$ are meet-semilattice homomorphisms and
those of $\BAL$ are lattice homomorphisms (which may not preserve bottom).
\item By restricting the
equivalence of Remark~\ref{rem: PS inside PPS}\ref{PriesR and PriesST}, we obtain that $\StoneR$ is equivalent to the 
full subcategory of $\PStone$
and $\StonePS$ to the full subcategory of $\PStoneWS$ consisting of pointed
Stone spaces with isolated maxima. 
\item By
Remark~\ref{rem: PS inside PPS}\ref{morphisms of PriesR and PriesST},
the full subcategories of $\PStoneS$ and $\PStoneP$ consisting of pointed Stone spaces with isolated maxima 
are equivalent to $\Stone$. 
\end{enumerate}
\end{remark}

Let $L \in \LStoneFrmJ$. Then $L$ is a Stone frame iff $\K(L)$ is a boolean algebra, which
happens iff $\Y(L)$ is a Stone space. Therefore, Theorem~\ref{thm: pointed Stone} yields:

\begin{theorem} \label{thm: Stone}
\hfill
\begin{enumerate}[$(1)$]
\item $\StoneFrmJ$ is equivalent to $\BAM$ and dually equivalent to $
\StoneR$.
\item $\StoneFrmWS$ is equivalent to $\BAL$ and dually equivalent to $\StonePS$.
\end{enumerate}
\end{theorem}

\begin{remark} \label{rem: Halmos}
The dual equivalence between $\BAM$ and $\StoneR$ was first established in 
\cite{Hal56}, but Halmos worked with join-preserving rather than meet-preserving 
maps between boolean algebras.
\end{remark}

Putting together Theorems~\ref{thm: pointed Stone} and \ref{thm: Stone} yields
Figure \ref{fig: third diagram}, which is similar to 
Figure~\ref{fig: second diagram}. The tables after the figure describe the listed categories.
 
\begin{figure}[H]
\[
\begin{tikzcd}[row sep = \rowsep, column sep = 1.0pc]
& \PStone \arrow[rr, \arr, shift left = .6ex, "\V"]  \arrow[from=dd, tail] && 
\LStoneFrmJ \arrow[ll, \arr, shift left = .6ex, "\Y"] \arrow[rr, shift left = .6ex, "\K"] 
\arrow[from=dd, tail] && \GBAM \arrow[ll, shift left = .6ex, "\F"] \\
\StoneR \arrow[ru, hookrightarrow]  \arrow[rr, \arr, shift left = .6ex, crossing over] 
&& \StoneFrmJ \arrow[ll, \arr, shift left = .6ex, crossing over] \arrow[ru, tail] 
\arrow[rr, shift left = .6ex, crossing over]  && \BAM \arrow[ru, tail]  
\arrow[ll, shift left = .6ex, crossing over] \\
& \PStoneWS \arrow[rr, \arr, shift left = .6ex]  \arrow[from=dd, tail] && \LStoneFrmWS 
\arrow[ll, \arr, shift left = .6ex] \arrow[rr, shift left = .6ex] \arrow[from=dd, tail] && 
\GBA \arrow[ll, shift left = .6ex] \arrow[uu, tail] \\
\StonePS \arrow[rr, \arr, shift left = .6ex, crossing over] \arrow[ru, hookrightarrow] 
\arrow[uu, tail]
&& \StoneFrmWS \arrow[ll, \arr, shift left = .6ex, crossing over] \arrow[ru, tail]  
\arrow[uu, crossing over, tail] \arrow[rr, shift left = .6ex, crossing over]  && 
\BAL \arrow[ll, shift left = .6ex, crossing over] \arrow[ru, tail] 
\arrow[uu, crossing over, tail]\\
& \PStoneS \arrow[rr, \arr, shift left = .6ex]  \arrow[from=dd, tail] && \LStoneFrmS 
\arrow[ll, \arr, shift left = .6ex] \arrow[rr, shift left = .6ex] \arrow[from=dd, tail] && 
\GBAS \arrow[ll, shift left = .6ex] \arrow[uu, tail] \\
\Stone \arrow[rr, \arr, shift left = .6ex, crossing over] \arrow[ru, hookrightarrow] 
\arrow[uu, tail]
&& \StoneFrm \arrow[ll, \arr, shift left = .6ex, crossing over] \arrow[ru, tail]  
\arrow[uu, crossing over, tail] \arrow[rr, shift left = .6ex, crossing over]  && 
\BA \arrow[ll, shift left = .6ex, crossing over] \arrow[ru, tail] 
\arrow[uu, crossing over, tail]\\
& \PStoneP \arrow[rr, \arr, shift left = .6ex] && \LStoneFrm 
\arrow[ll, \arr, shift left = .6ex] 
\arrow[rr, shift left = .6ex] &&  \GBAP \arrow[ll, shift left = .6ex] \arrow[uu, tail] \\
\Stone \arrow[rr, shift left = .6ex] \arrow[uu, equal] \arrow[ru, hookrightarrow] 
&& \arrow[ll, shift left = .6ex] \StoneFrm \arrow[rr, shift left = .6ex] 
\arrow[uu, equal, crossing over] \arrow[ru, tail] && \BA \arrow[ll, shift left = .6ex] 
\arrow[uu, equal, crossing over] \arrow[ru, tail]
\end{tikzcd}
\]
\caption{Connecting Stone duality and HMS duality}\label{fig: third diagram}
\end{figure}

\begin{center}
\begin{tabular}{|p{\widthfive}p{\widthseven}p{\widtheight}|} \hline
\multicolumn{3}{|c|}{\textbf{Categories of pointed Stone spaces}} \\ \hline
\textsf{Category}  & \textsf{Morphisms} & \textsf{Location}\\ 
\hline
$\PStone$  & $\PGPries$-morphisms & Def.~\ref{def: PStone} \\ 
$\PStoneWS$  & $\PGPriesWS$-morphisms & \textquotedbl \\ 
$\PStoneS$  & $\PGPriesS$-morphisms & \textquotedbl \\ 
$\PStoneP$ & $\PGPriesP$-morphisms & \textquotedbl \\ 
\hline
\end{tabular}
\end{center}

\begin{center}
\begin{tabular}{|p{\widthfive}p{\widthseven}p{\widtheight}|} \hline
\multicolumn{3}{|c|}{\textbf{Categories of Stone spaces}} \\ \hline
{\sf Category}  & {\sf Morphisms} & {\sf Location} \\ 
\hline
$\StoneR$  & $\PGPries$-morphisms & Def.~\ref{def: StoneR}\ref{StoneR} \\ 
$\StoneS$  & $\PGPriesS$-morphisms & \textquotedbl \\ 
$\Stone$  & $\PGPriesP$-morphisms & Def.~\ref{def: BA and Stone}\ref{Stone}\\ 
\hline
\end{tabular}
\end{center}

\begin{center}
\begin{tabular}{|p{\widthfive}p{\widthseven}p{\widtheight}|} \hline
\multicolumn{3}{|c|}{\textbf{Categories of locally Stone frames}} \\ \hline
\textsf{Category} & \textsf{Morphisms} & \textsf{Location}\\ 
\hline
$\LStoneFrmJ$  & $\AlgFrmJ$-morphisms & Def.~\ref{def: LStoneFrm} \\
$\LStoneFrmWS$  & $\AlgFrmWS$-morphisms & \textquotedbl \\
$\LStoneFrmS$  & $\AlgFrmS$-morphisms & \textquotedbl \\
$\LStoneFrm$  & $\AlgFrm$-morphisms & \textquotedbl \\
\hline
\end{tabular}
\end{center}

\begin{center}
\begin{tabular}{|p{\widthfive}p{\widthseven}p{\widtheight}|} \hline
\multicolumn{3}{|c|}{\textbf{Categories of Stone frames}} \\ \hline
\textsf{Category}  & \textsf{Morphisms} & \textsf{Location}\\ 
\hline
$\StoneFrmJ$  & $\AlgFrmJ$-morphisms & Def.~\ref{def: StoneFrmJ}\ref{StoneFrmJ}\\
$\StoneFrmWS$  & $\AlgFrmWS$-morphisms & \textquotedbl\\ 
$\StoneFrm$  & frame homomorphisms & Def.~\ref{def: Stone frame}\\ 
\hline
\end{tabular}
\end{center}

\begin{center}
\begin{tabular}{|p{\widthfive}p{\widthseven}p{\widtheight}|} \hline
\multicolumn{3}{|c|}{\textbf{Categories of boolean algebras}} \\ \hline
\textsf{Category}  & \textsf{Morphisms} & \textsf{Location}\\ 
\hline
$\GBAM$ & $\DMSLat$-morphisms & Def.~\ref{def: GBA} \\
$\GBA$  & $\DMSLatWS$-morphisms & \textquotedbl \\
$\GBAS$  & $\DMSLatS$-morphisms & \textquotedbl \\
$\GBAP$  & $\DMSLatP$-morphisms & \textquotedbl \\
\hline
\end{tabular}
\end{center}

\begin{center}
\begin{tabular}{|p{\widthfive}p{\widthseven}p{\widtheight}|} \hline
\multicolumn{3}{|c|}{\textbf{Categories of generalized boolean algebras}} \\ \hline
\textsf{Category}  & \textsf{Morphisms} & \textsf{Location}\\ 
\hline
$\BAM$ & $\DMSLat$-morphisms & Def.~\ref{def: BA}\ref{BAM} \\
$\BAL$  & $\DMSLatS$-morphisms & \textquotedbl \\ 
$\BA$  & boolean homomorphisms & Def.~\ref{def: BA and Stone}\ref{BA}\\ 
\hline
\end{tabular}
\end{center}

\section*{Acknowledgement}

We would like to thank the referee for useful comments which have improved the 
readability of the paper.

\newcommand{\etalchar}[1]{$^{#1}$}
\newcommand{\cprime}{$'$}

\end{document}